\documentclass[12pt,reqno]{amsart}
\usepackage[utf8]{inputenc}
\usepackage[activeacute,english]{babel}

\usepackage[margin=0.9in]{geometry}
\usepackage[hidelinks]{hyperref} 

\usepackage[leqno]{amsmath}
\usepackage{enumerate}
\usepackage{amssymb}
\usepackage{mathtools}
\usepackage{physics}
\usepackage{amsthm}
\usepackage{mathrsfs}
\usepackage[nocompress]{cite}
\usepackage{verbatim}
\usepackage[alphabetic]{amsrefs}

\usepackage{needspace}

\usepackage{pgfplots}
\pgfplotsset{compat=1.15}
\usepackage{mathrsfs}
\usetikzlibrary{arrows}

\mathtoolsset{showonlyrefs,showmanualtags}  

\hypersetup{colorlinks=true, linkcolor=blue, citecolor=red}

\theoremstyle{plain}
\newtheorem{theorem}{Theorem}[section]
\newtheorem{proposition}[theorem]{Proposition}
\newtheorem{corollary}[theorem]{Corollary}
\newtheorem{lemma}[theorem]{Lemma}
\theoremstyle{remark}
\newtheorem{remark}[theorem]{Remark}

\theoremstyle{definition}
\newtheorem{definition}[theorem]{Definition}

\allowdisplaybreaks[1]

\numberwithin{equation}{section}

\makeatletter
\newcommand{\proofstep}[1]{%
  \par
  \addvspace{\medskipamount}
  \textit{#1\@addpunct{.}}\enspace\ignorespaces
}
\makeatother

\renewcommand{\expval}[1]{\langle {#1} \rangle}
\newcommand{\smabs}[1]{| {#1} |}

\begin{document}
\title[On the shape of minimizers for the periodic nonlocal perimeter in $\mathbb{R}^2$]{On the shape of minimizers for the\\periodic nonlocal perimeter in $\mathbb{R}^2$}

\author[Renzo Bruera]{Renzo Bruera}
\address{R. Bruera \textsuperscript{1}
\newline
\textsuperscript{1}
Universitat Politècnica de Catalunya, Departament de Matem\`{a}tiques, Av. Diagonal 647, 08028 Barcelona.}
\email{renzo.bruera@upc.edu}

\date{\today}
\thanks{This work is supported by the Spanish State Research Agency, through the Severo Ochoa and Mar\'{\i}a de Maeztu Program for Centers and Units of Excellence in R\&D (CEX2020-001084-M). Renzo Bruera is supported by the Spanish Ministry of Universities through the national program FPU (reference FPU20/07006), by the Spanish grants PID2021-123903NB-I00 and RED2022-134784-T funded by MCIN/AEI/10.13039/501100011033 and by ERDF “A way of making Europe”, and by the Catalan grant 2021-SGR-00087.}

\begin{abstract}

In this paper, we study planar nonlocal Delaunay sets. That is, open sets in $\mathbb{R}^2$ with constant nonlocal mean curvature that are periodic in $x_1$, and even in $x_1$ and in $x_2$. Using bifurcation analysis and fine explicit computations, we prove that every sufficiently $C^{1,\beta}$-flat nonlocal Delaunay set in $\mathbb{R}^2$ that is not a straight band is unstable with respect to volume-preserving periodic variations. 

Our results support the conjecture that, as in the local case, in the range of large areas, minimizers of the periodic nonlocal isoperimetric problem---also known as the nonlocal liquid drop problem with prescribed area between two parallel hyperplanes---are all straight bands.
\end{abstract}

\maketitle

\tableofcontents

\section{Introduction}

This paper concerns the stability of periodic, even, cylindrically symmetric sets of constant nonlocal mean curvature (NMC) in dimension $n=2$. That is, we consider sets $E\subset \mathbb{R}^2$ of the form $E=\{(z_1,z_2)\in \mathbb{R}^2 : \abs{z_2}<u(z_1)\}$ for some $2\pi$-periodic, even function $u:\mathbb{R}\to \mathbb{R}$ that satisfy, for every $x\in \partial E$,
\begin{equation}
    H_{\alpha}[E](x) = -\mathrm{PV}\int_{\mathbb{R}^2} \{\chi_E(y)-\chi_{E^{c}}(y)\}K(x-y)\dd{y} = \mathrm{constant},
\end{equation}
where $K(z) = \abs{z}^{-2-\alpha}$ with $\alpha\in (0,1)$, and $\chi_A$ denotes the characteristic function of the set $A$, and we study their stability when viewed as volume-constrained critical points of the periodic nonlocal perimeter functional,
\begin{equation}\label{eq:pnp_intro}
    \mathcal{P}_{\alpha}[E] = \int_{E\cap \Omega}\int_{E^c} K(x-y)\dd{x}\dd{y},
\end{equation}
with $\Omega = \{(z_1,z_2)\in \mathbb{R}^2 : -\pi < z_1 < \pi\}$.

This problem is the two-dimensional nonlocal analogue of a classical problem known as the liquid drop problem with prescribed volume between two parallel hyperplanes. Given a volume constraint $\omega>0$, one wishes to find the subset $\tilde{E}\subset \mathbb{R}^n$ with volume $\omega$ in $(0,\pi)\times \mathbb{R}^{n-1}$ (i.e., satisfying $\smabs{\tilde{E}\cap (0,\pi)\times \mathbb{R}^{n-1}}=\omega $) with the minimum (classical) perimeter in $(0,\pi)\times \mathbb{R}^{n-1}$, without including the perimeter contained in the hyperplanes $\{0\}\times \mathbb{R}^{n-1}$ and $\{\pi\}\times \mathbb{R}^{n-1}$---this is often called the perimeter of the \textit{free boundary} of $\tilde{E}$. By considering the even reflection of a $\tilde{E}$, one can relate this problem to the periodic isoperimetric problem, i.e., the isoperimetric problem relative to the slab $(-\pi, \pi)\times \mathbb{R}^{n-1}$ with periodic boundary conditions. 

It is known that the $2\pi$-periodic extension of the free boundary of every minimizer of the classical periodic isoperimetric problem is a $2\pi$-periodic surface of revolution with constant mean curvature. These are commonly referred to as \textit{Delaunay surfaces}, in honour of the work of Charles Delaunay, who characterized them in dimension $n=3$ \cite{Delaunay}. There are only three kinds of embedded Delaunay surfaces: straight cylinders, arrays of spheres, and unduloids, although the former two are, in fact, limiting cases of unduloids. In the literature, the name ``Delaunay surfaces'' is often reserved for this last kind. Unduloids are the surfaces of revolution whose generatrix is the curve traced by one of the foci of an ellipse that is rolled without slipping along a straight line. When the ellipse has eccentricity 0 (i.e., when it is a circumference), the surface of revolution generated is a straight cylinder, and in the limit in which the ellipse is a line segment (i.e., when its semi-minor axis has length 0), the surface of revolution generated is an array of spheres.

Still in the local setting, it has been shown (see \cite{PedrosaRitore1999} for the following claims) that, in dimensions $3\leq n \leq 8$, every minimizer of the periodic isoperimetric problem is either an array of spheres or a straight cylinder, depending on whether the prescribed volume $\omega$ is larger or smaller than some critical volume $\omega^*$. In particular, no unduloid is a minimizer for any volume constraint in dimensions $3\leq n \leq 8$. More generally, unduloids are known to be unstable in dimensions $3\leq n\leq 8$---here, by stable we mean that the second variation of the perimeter functional at the free boundary of the unduloid is nonnegative. In contrast, when $n\geq 10$, there exist Delaunay unduloids that are minimizers for certain volumes (and, as a consequence, they are stable). The problem remains open for $n=9$. 

Formulating the nonlocal analogue of the periodic isoperimetric problem is not entirely straightforward. Recall that, given a set $\tilde{E}\subset \mathbb{R}^n$, its nonlocal perimeter is defined by
\begin{equation}
    \mathrm{Per}_{\alpha}[\tilde{E}] = \int_{\tilde{E}}\int_{\tilde{E}^c} K(x-y)\dd{x}\dd{y},
\end{equation}
with $K(z)=\abs{z}^{-n-\alpha}$ and $\alpha \in (0,1)$. Whereas in the local problem the periodicity of $\tilde{E}$ appears only in the boundary conditions, the nonlocal perimeter of $\tilde{E}$ in the slab $\Omega := (-\pi,\pi)\times \mathbb{R}^{n-1}$ depends both on $\tilde{E}\cap \Omega$ and $\tilde{E}\cap \Omega^c$. It turns out (see also \cite{AlcoverBruera} for a detailed discussion on this fact) that the appropriate notion of nonlocal perimeter for sets that are $2\pi$-periodic in the $x_1$-direction is given by the functional $\mathcal{P}_{\alpha}$ defined in \eqref{eq:pnp_intro}. 

The periodic nonlocal isoperimetric problem is then formulated as follows. In any dimension $n\geq 2$, and for a given volume constraint $\omega>0$, we wish to minimize the functional $\mathcal{P}_{\alpha}$ among all sets $F\subset \mathbb{R}^n$ that are $2\pi$-periodic in the coordinate $x_1$ and have volume $\omega$ within the slab $\Omega$, i.e., $\abs{F\cap \Omega}=\omega$. In \cite{CCM1}, Cabré, Csató, and Mas showed that volume-constrained minimizers of $\mathcal{P}_{\alpha}$ exist for every $n\geq 2$ and every $\omega >0$. Moreover, they proved that every minimizer must be even and cylindrically symmetric---i.e., of the form
\begin{equation}
    E = \{(x_1,x')\in \mathbb{R}^n\simeq \mathbb{R}\times \mathbb{R}^{n-1} : \abs{x'}<u(x_1)\}
\end{equation}
for some $2\pi$-periodic, even function $u:\mathbb{R}\to \mathbb{R}$---, and that, provided that the minimizer is sufficiently regular, it must have constant NMC. Notice that the nonlocal problem is interesting even in dimension $n=2$, unlike in the local setting, where all curves of constant curvature are either straight lines or circular arcs. Surfaces of revolution that are periodic and even in $x_1$, and have constant NMC are called \textit{nonlocal Delaunay surfaces}, by analogy with their local counterparts.

The existence of constant-NMC surfaces of revolution had also been previously studied by Cabré, Solà-Morales, Fall, and Weth in \cite{CNLMCDelCyl} for $n=2$, and then by Cabré, Fall, and Weth in \cite{CabreFallWeth2018} for $n\geq 2$. By applying perturbation techniques (essentially, the implicit function theorem), they showed the existence of a continuous family of periodic constant-NMC surfaces of revolution bifurcating from a straight cylinder, i.e., from a set of the form $\{(x_1,x')\in \mathbb{R}^n:\abs{x'}<R\}$, for a particular value of $R>0$. The sets obtained via these perturbation techniques are called \textit{near-cylinders}. They are constant-NMC, $2\pi$-periodic, even surfaces of revolution (i.e., nonlocal Delaunay surfaces, as defined before) that are close to a straight cylinder. Notice that when $n=2$, straight cylinders are, in fact, straight bands in $\mathbb{R}^2$, and thus near-cylinders should be called near-bands. Even so, we will call them cylinders for consistency with the usual term in $n\geq 3$.

It can be shown (see Remark \ref{remark_constant NMC_critical_point} in Section \ref{section:on_the_stab_of_periodic_NMC_surf_of_rev}) that every $2\pi$-periodic set with constant NMC is a volume-constrained critical point of $\mathcal{P}_{\alpha}$. It is then natural to ask whether the near-cylinders described in the previous paragraph are volume-constrained minimizers of $\mathcal{P}_{\alpha}$, or whether they are, at least, stable.

In this paper, we show that, in dimension $2$, all non-straight near-cylinders are unstable with respect to even, cylindrically symmetric volume-preserving periodic variations. From this, we deduce that, given a volume constraint, if a minimizer of $\mathcal{P}_{\alpha}$ in dimension $n=2$ is sufficiently $C^{1,\beta}$-flat, then it must be a straight cylinder. Now, in order to explain this further, we make the following remark, to which we will refer later on.

\begin{remark}\label{remark:bifurcation}
    Let us start by describing the stability or instability of straight cylinders. First, we note that there is a continuum of them: for every radius $R>0$, the straight band of amplitude $2R$ (to which we will associate the generatrix function $u\equiv R$) satisfies the equation
    \begin{equation}\label{eq:NMC_eq_intro}
        H_{\alpha}(u) - h_R = 0,
    \end{equation}
    where $h_R := H_{\alpha}(R)$ is its nonlocal mean curvature. Its linearization with respect to $u$ is an integro-differential operator $D_u H_{\alpha}(R)v$ acting on $2\pi$-periodic, even functions $v=v(t)$. 

    In Section \ref{section:on_the_stab_of_periodic_NMC_surf_of_rev}, we will see that the stability of straight cylinders is determined by the sign of the second eigenvalue of $D_u H_{\alpha}(R)$ (the first eigenvalue is always negative, and it corresponds to non-volume-preserving variations, which are not admissible). We will show that there exists a critical radius $R_1$ such that the straight cylinder of radius $R>0$ is strictly stable if $R>R_1$, it is unstable if $R<R_1$, and it is degenerate stable (i.e., semi-stable) if $R=R_1$. 

    When $R>R_1$, since the linearized operator $D_uH_{\alpha}(R)$ is non-degenerate, the implicit function theorem applied to the equation \eqref{eq:NMC_eq_intro}, which allows us to find the solutions to \eqref{eq:NMC_eq_intro} as a function of $R$, will yield the trivial curve of solutions $u \equiv R$. In other words, there are no near-cylinders bifurcating from straight cylinders of radii $R>R_1$.

    On the other hand, we will see that there exists a decreasing sequence $R_1>R_2>\cdots >R_k \downarrow 0$ of bifurcating radii. Each one corresponds to the radius $R$ at which one of the eigenvalues of $D_uH_{\alpha}(R)$ vanishes. For $k\geq 2$, the straight cylinder of radius $R_k$ is unstable (since $R_k<R_1$ for $k\geq 2$), and, therefore, the branch of near-cylinders bifurcating from $R_k$ will be unstable too. 

    When $R=R_1$, we recall that the straight cylinder of radius $R_1$ is degenerate stable (since the second eigenvalue of $D_u H_{\alpha}(R)$ vanishes). As a consequence, the branch of near-cylinders bifurcating from $R_1$ could be either stable or unstable. By explicit computations, we will see that they are, in fact, unstable. This is the main contribution of this paper.
\end{remark}
Since some of our results will concern the limit $\alpha\uparrow 1$, let us comment on the behaviour of the bifurcating radii $R_m$ in the local case, $\alpha=1$.
\begin{remark}\label{remark:radius_alpha_1}
Notice that, in the local case, all straight bands are sets of constant curvature equal to 0. As a consequence, the linearized or Jacobi operator is the operator $-\Delta$ with periodic boundary conditions. This operator is positive definite, and thus all straight bands are stable. Therefore, there cannot exist any bifurcating radii when $\alpha=1$, i.e., we have $R_1(\alpha=1)=0$.
\end{remark}

\subsection{Main results} First, we extend the perturbative result of \cite{CNLMCDelCyl} by showing that there exists a \textit{sequence} of smooth families of constant-NMC near-cylinders in $\mathbb{R}^2$ that converge to the set $\{0\}\times \mathbb{R}$ as $m\to +\infty$. To state the theorem, we need to introduce some notation. Throughout the whole text, we will let $\alpha\in (0,1)$ and $\beta\in (\alpha, 1)$. We denote by $C^{1,\beta}_{even}(\mathbb{T}^1)$ the space of $2\pi$-periodic, even functions of class $C^{1,\beta}$ endowed with the usual norm. That is,
\begin{equation}
    C^{1,\beta}_{even}(\mathbb{T}^1) := \{ u \in C^{1,\beta}(\mathbb{R}) : \ u \text{ is }2\pi\text{-periodic and even} \},
\end{equation}
with $\norm{u}_{C^{1,\beta}_{even}(\mathbb{T}^1)} := \norm{u}_{C^{1,\beta}([-\pi,\pi])}$. The spaces $C^{\beta- \alpha}_{even}(\mathbb{T}^1)$ and $L^{2}_{even}(\mathbb{T}^1)$ are defined similarly. 

Slightly abusing notation, for a cylindrically symmetric set with generatrix function $u$---that is, a set of the form $E=\{(z_1,z_2) \in \mathbb{R}^2 : \abs{z_2}<u(z_1)\}$ for some positive function $u:\mathbb{R}\to \mathbb{R}$---we denote by
\begin{equation}
    H_{\alpha}(u):=H_{\alpha}[E](s, u(s)),
\end{equation}
the NMC of the set $E$ at the point $(s,u(s))\in \partial E$. In this way, when restricted to the class of cylindrically symmetric sets, the NMC operator can be seen as an operator acting on functions. 

When $u$ is constant, i.e., when $u\equiv R$, with $R>0$, we will see in Section \ref{section:a_seq_of_bif_delaunay} that $H_{\alpha}(R)$ is constant and positive. For $R>0$, we denote by $u_R$ the constant function equal to $R$, i.e., $u_R \equiv R$, and by $h_R := H_{\alpha}(u_R)=H_{\alpha}(R)$ the NMC of the straight cylinder of radius $R$. 

Our first main result is the following. Using bifurcation analysis, we show the existence and local uniqueness of a sequence of bifurcation branches of constant-NMC near-cylinders in $\mathbb{R}^2$, thereby extending the results of \cite{CNLMCDelCyl}, that proved existence of the first branch only.
\begin{theorem}\label{theorem_seq_bif}
Let $n=2$, $\alpha\in (0,1)$ and $\alpha< \beta < \min\{1, 2\alpha+\frac{1}{2}\}$. There exists $R_1>0$, depending only on $\alpha$, such that, for every $m\geq 1$, denoting $R_m=R_1/m$, the pair $(R_m,u_{R_m})$ is a bifurcation point of the equation 
\begin{equation}\label{equation_H(Ru)}
    \mathcal{H}(R,u):=H_\alpha(u)-h_R=0, \quad (R,u)\in \mathbb{R}\times C^{1,\beta}_{even}(\mathbb{T}^1).
\end{equation}

As a consequence, for every $m\geq 1$ there exists $\nu_m>0$, depending only on $\alpha$, $\beta$, and $m$, and a smooth curve of nontrivial solutions to \eqref{equation_H(Ru)},
\begin{equation}\label{eq::expression_solutions}
    \left\{(\gamma_m(a)R_m,\  w_m(a)) : a\in (-\nu_m,\nu_m)\right\}\subset \mathbb{R}\times C^{1,\beta}_{even}(\mathbb{T}^1),
\end{equation}
with every $w_m(a)$ of the form
\begin{equation}
  w_m(a)(\cdot) = \gamma_m(a)R_m + a (\cos(m\cdot)+v_m(a)(\cdot)),
\end{equation}
with $v_m(a)$ orthogonal to $\cos(m\cdot)$ in $L^2_{even}(\mathbb{T}^1)$, and  $(\gamma_m(a),v_m(a))\xrightarrow[a\to 0]{} (1,0)$ in $\mathbb{R}\times C^{1,\beta}_{even}(\mathbb{T}^1)$.

In particular, for every $m\geq 1$ and every $a\in (- \nu_m, \nu_m)$, the sets
    \begin{equation}\label{eq::expression_near_cylinders}
        E_m(a) = \{(z_1,z_2) \in \mathbb{R}^2 : \abs{z_2}<w_m(a)\}
    \end{equation}
have each constant nonlocal mean curvature equal to $h_{\gamma_m(a) R_m}$.

Moreover, there exists a neighbourhood $\mathcal{U}$ of the set $\{(R,u_R):R>0\}$ in $\mathbb{R}\times C^{1,\beta}_{even}(\mathbb{T}^1)$ such that every solution to \eqref{equation_H(Ru)} in $\mathcal{U}$ is either a trivial solution, or one of the bifurcated solutions $(\gamma_m(a)R_m, w_m(a))$ for some $m\geq 1$ and some $a\in (-\nu_m, \nu_m)$. In other words,
\begin{equation}
    \mathcal{H}^{-1}(0)\cap \mathcal{U} = \{(R,u_R):R>0\} \cup  \left(\bigcup_{m\geq 1}\left\{(\gamma_m(a)R_m,\  w_m(a)) : a\in (-\nu_m,\nu_m)\right\}\right).
\end{equation}
\end{theorem}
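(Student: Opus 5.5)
The plan is to apply the Crandall--Rabinowitz theorem of bifurcation from a simple eigenvalue at each candidate radius, after first reducing the family of problems to a single one by scaling. The key observation is the homogeneity of the kernel. If $E$ is replaced by $\lambda E$, then $H_\alpha[\lambda E](\lambda x)=\lambda^{-\alpha}H_\alpha[E](x)$. However, $2\pi$-periodicity is not scale invariant, so a set with generatrix $u(\cdot)$ scaled by $1/m$ is $2\pi/m$-periodic, hence still $2\pi$-periodic.

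Concretely, if $(R,u)$ solves \eqref{equation_H(Ru)}, then $(R/m, \tfrac1m u(m\cdot))$ solves it as well, since the curvature scales by $m^{\alpha}$ on both sides, with $h_{R/m}=m^{\alpha}h_R$. Consequently it suffices to find the first bifurcation point $R_1$ with kernel spanned by $\cos(\cdot)$. The $m$-th branch is then obtained by rescaling the first one with mode $\cos(m\cdot)$, setting $w_m(a)(t)=\tfrac1m w_1(ma)(mt)$ up to reparametrizing $a$. Nevertheless, I would perform Crandall--Rabinowitz at each $R_m$ directly, because it is needed for the local uniqueness statement.

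The first step is regularity. I would show that $\mathcal{H}:(0,\infty)\times \{u\in C^{1,\beta}_{even}(\mathbb{T}^1): u>0\}\to C^{\beta-\alpha}_{even}(\mathbb{T}^1)$ is smooth (analytic), using the standard representation of $H_\alpha(u)$ in dimension $2$ as an integral over $\mathbb{R}$ of terms of the form $G\big(\frac{u(s)\pm u(s-t)}{t}\big)|t|^{-1-\alpha}$, as in \cite{CNLMCDelCyl}. The upper bound on $\beta$ will enter when controlling the contribution of the nonlocal tails, where the odd and even parts interact.

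The second step is to compute the linearization at $u_R$. Since the coefficients are translation invariant, $L_R:=D_uH_\alpha(R)$ is a Fourier multiplier on even periodic functions: $L_R\cos(k\cdot)=\lambda_k(R)\cos(k\cdot)$. The multiplier has the form $\lambda_k(R)=c_\alpha\big(\mu(kR)\big)R^{-1-\alpha}$, where
\begin{equation}
\mu(\xi)=\int_{\mathbb{R}}\frac{1-\cos(\xi t)}{|t|^{1+\alpha}}\,dt-\int_{\mathbb{R}}\frac{(1+\cos(\xi t))\,4^{\,}}{(t^2+4)^{(2+\alpha)/2}}\,dt
\end{equation}
up to normalizing constants. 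It consists of the local-like term $|\xi|^{\alpha}$ minus a contribution coming from the opposite boundary component.

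The crucial facts to establish are that $\lambda_0<0$, and that $\xi\mapsto\mu(\xi)$ is strictly increasing on $(0,\infty)$, negative near $0$ and positive for large $\xi$, with a unique zero $\xi_*$. Granting this, $\lambda_k(R)=0$ if and only if $kR=\xi_*$. Setting $R_1=\xi_*$, the kernel of $L_{R_m}$ is exactly $\mathrm{span}\{\cos(m\cdot)\}$, because for $k\neq m$ we have $kR_m\neq\xi_*$. This gives radii exactly $R_1/m$.

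Next I would check the Fredholm properties. $L_R$ is of the type $(-\Delta)^{(1+\alpha)/2}$ plus a smoothing perturbation, so $L_R:C^{1,\beta}_{even}\to C^{\beta-\alpha}_{even}$ is Fredholm of index $0$, and its range is the $L^2$-orthogonal complement of $\cos(m\cdot)$. The transversality condition $\partial_R D_u\mathcal{H}(R_m,u_{R_m})\cos(m\cdot)\notin\mathrm{Range}$ reduces to $\partial_R\lambda_m(R)|_{R_m}\neq0$, which is $m\,\mu'(\xi_*)\neq0$ and follows from strict monotonicity. One more adjustment is needed because $\mathcal{H}(R,u_R)\equiv0$ is the trivial curve but $\mathcal{H}$ involves $h_R$. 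I would write $u=R+v$, which fits the stated form $w_m(a)=\gamma_m(a)R_m+a(\cos(m\cdot)+v_m(a))$. Crandall--Rabinowitz then yields the curves and local uniqueness near each $(R_m,u_{R_m})$.

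The global neighbourhood $\mathcal{U}$ is built as follows. Near any $(R,u_R)$ with $R\notin\{R_m\}$, the operator $L_R$ is invertible, so the implicit function theorem (in $u$ modulo constants, with $R$ as parameter) gives only trivial solutions. Taking the union of these neighbourhoods with the Crandall--Rabinowitz neighbourhoods gives $\mathcal{U}$, since the $R_m$ accumulate only at $0\notin(0,\infty)$.

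The main obstacle is the monotonicity and unique zero of $\mu$. For this I would write the second integral using $\int (1+\cos\xi t)(t^2+4)^{-(2+\alpha)/2}dt$, expressed through a modified Bessel function $K_{(1+\alpha)/2}(2\xi)\xi^{(1+\alpha)/2}$, which is positive and decreasing in $\xi$, while $|\xi|^\alpha$ is increasing. The difference is therefore strictly increasing. At $\xi=0$ it is negative, because $\lambda_0<0$ reflects that enlarging the band decreases the curvature. For large $\xi$ it is positive, so the zero is unique.
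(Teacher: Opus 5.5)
Your proposal is correct up to two fixable slips noted below, and it is essentially the paper's proof. Crandall--Rabinowitz in the variables $u=R+v$ is what the paper does by hand: it applies the implicit function theorem to the blown-up map $\Phi(a,\gamma,\varphi)=\frac{1}{2a}\{H_\alpha(\gamma R_m+a\varphi)-H_\alpha(\gamma R_m)\}$ with $\varphi=\cos(m\cdot)+v$. Your transversality condition is the paper's $\kappa_m>0$, the relation $R_m=R_1/m$ comes from the same homogeneity, and $\mathcal{U}$ is assembled the same way from the invertibility of $D_uH_\alpha(R)$ for $R\notin\{R_m\}$.

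The one genuine difference is in the spectral step. You write $\mu_k(R)=R^{-1-\alpha}\mu(kR)$ and use that $\xi\mapsto\int_{\mathbb{R}}\cos(\xi t)(t^2+4)^{-(2+\alpha)/2}\,dt$ is decreasing. This gives monotonicity in $k$ for every $R$, which is more robust than the paper's step: from $\mu_k(R)=k^{1+\alpha}\mu_1(kR)$ with $\mu_1(kR)$ increasing in $k$, one can only conclude that $\mu_k(R)$ increases when $\mu_1(kR)\ge 0$. For this theorem, however, you only need $\ker L_{R_m}=\mathrm{span}\{\cos(m\cdot)\}$. That already follows from the identity above together with uniqueness of the zero of $R\mapsto\mu_1(R)$, which the paper proves via $K_{z_2}<0$.

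Two slips to fix.

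First, the symbol. The change of variables $t=R\tau$ gives
$\mu(\xi)=\int_{\mathbb{R}}(1-\cos(\xi t))|t|^{-2-\alpha}\,dt-\int_{\mathbb{R}}(1+\cos(\xi t))(t^2+4)^{-(2+\alpha)/2}\,dt$.
So the leading term is $c_\alpha|\xi|^{1+\alpha}$ with $c_\alpha=\int_{\mathbb{R}}(1-\cos s)|s|^{-2-\alpha}\,ds$, not $|\xi|^{\alpha}$. This matches your own remark that $L_R$ has order $1+\alpha$. There is also no factor $4$ in the second integrand. The qualitative argument is unaffected.

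Second, strict monotonicity of $\mu$ does not by itself give $\mu'(\xi_*)\neq 0$; a strictly increasing function can have a vanishing derivative. You need the quantitative bound $\mu'(\xi)\ge(1+\alpha)c_\alpha\xi^{\alpha}>0$, which your decomposition does provide. More simply, as in the paper, differentiate the unscaled eigenvalue:
$\partial_R\mu_m(R)=-2\int_{\mathbb{R}}(1+\cos(mt))K_{z_2}(t,2R)\,dt>0$, because $K_{z_2}(z)<0$ for $z_2>0$.

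Finally, the restriction $\beta<2\alpha+\frac12$ plays no role in this argument. Smoothness of $H_\alpha$ from $C^{1,\beta}$ to $C^{\beta-\alpha}$ holds for every $\beta\in(\alpha,1)$ (Cabr\'e--Fall--Weth), so there is no need to locate where the upper bound ``enters''.
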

\begin{remark}\label{remark:dependence_beta}
We note that the assumption that $\beta<2 \alpha + \frac{1}{2}$ is merely technical and it entails no real loss of generality. Indeed, the choice of $\beta$ only possibly affects the size of the open sets $(-\nu_m, \nu_m)$ around $a=0$ in which each of the bifurcation branches is defined. In other words, there could be a loss of regularity of the functions $w_m(a)$ as $\abs{a}$ is increased. However, by the local uniqueness of solutions, each of the branches $\{ w_m(a) : a \in (- \nu_m, \nu_m)\}$ and $\{\tilde{w}_m(a) : a\in (- \tilde{\nu}_m, \tilde{\nu}_m')\}$ found, respectively, for any two choices $\beta$ and $\tilde{\beta}$ must coincide in the common domain of definition, i.e., $w_m(a) = \tilde{w}_m(a)$ for all $a\in(- \nu_m, \nu_m)\cap (- \tilde{\nu}_m, \tilde{\nu}_m)$.
\end{remark}

With Theorem \ref{theorem_seq_bif} in hand, we can state our second main result, which is the main novelty of this paper. It concerns the stability of both trivial and nontrivial solutions to \eqref{equation_H(Ru)} when viewed as volume-constrained critical points of the functional $\mathcal{P}_{\alpha}$. We introduce here the notion of stability, which will be treated more in depth in Section \ref{section:on_the_stab_of_periodic_NMC_surf_of_rev}. As we will explain later, every even, cylindrically symmetric $2\pi$-periodic set $E$ with constant NMC is a volume-constrained critical point of the periodic nonlocal perimeter, $\mathcal{P}_{\alpha}$. That is, we have
\begin{equation}
    \derivative{t}\mathcal{P}_{\alpha}[E_t]\bigg\vert_{t=0} = 0
\end{equation}
for all even, cylindrically symmetric, volume-preserving periodic variations $E_t$ of $E$---i.e., smooth deformations of the set $E$ that preserve its symmetry, its periodicity, and its volume within the slab, $\abs{E\cap \Omega}$, where, as usual, we have denoted $\Omega = \{(z_1,z_2)\in \mathbb{R}^2 : -\pi < z_1 < \pi\}$. We say that $E$ is \emph{stable} for $\mathcal{P}_{\alpha}$ if
\begin{equation}
    \derivative[2]{t}\mathcal{P}_{\alpha}[E_t] \bigg\vert_{t=0} \geq 0
\end{equation}
for all even, cylindrically symmetric, volume-preserving periodic variations $E_t$ of $E$.

Our second main result characterizes the minimum radius for stability of straight cylinders and, more importantly, it shows the instability of all near-cylinders except for the straight cylinders of radius $R>R_1$; see Remark \ref{remark:bifurcation} for the global bifurcation picture.
\needspace{4\baselineskip}
\begin{theorem}\label{theorem_stability_near_cylinders}
Let $\alpha\in (0,1)$ and $\alpha<\beta<\min\{1, 2\alpha + \frac{1}{2}\}$. In the setting and the notation of Theorem \ref{theorem_seq_bif}:
\begin{enumerate}
    \item Every straight cylinder of radius $R\geq R_1$ is stable for $\mathcal{P}_{\alpha}$, and every straight cylinder of radius $0<R<R_1$ is unstable. Here, $R_1$ is the radius in Theorem~\ref{theorem_seq_bif}, which depends only on $\alpha$.

    Moreover, 
    \begin{equation}\label{eq:asymptotic_R1_alpha_1}
     R_1=O(\sqrt{1-\alpha}) \quad \text{as} \quad \alpha\uparrow 1 .
    \end{equation}

    \item There exists $\alpha_0 \in (0,1)$ such that, if $\alpha \in [\alpha_0,1)$, for all $\abs{a}>0$ small enough, depending only on $\alpha$ and $\beta$, the sets
    \begin{equation}
        E_1(a) = \{(z_1,z_2) \in \mathbb{R}^2 : \abs{z_2}<w_1(a)\}
    \end{equation}
    are unstable for $\mathcal{P}_{\alpha}$.

    \item For every $m\geq 2$, for all $\abs{a}$ small enough, depending only on $\alpha$, $\beta$, and $m$, the sets
    \begin{equation}
        E_m(a)= \{(z_1,z_2) \in \mathbb{R}^2 : \abs{z_2}<w_m(a)\}
    \end{equation}
    are unstable for $\mathcal{P}_{\alpha}$.
\end{enumerate}
\end{theorem}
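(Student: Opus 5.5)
The plan is to express the second variation of $\mathcal{P}_\alpha$ at a constant-NMC, even, cylindrically symmetric set $E=\{|z_2|<u\}$ through the linearized operator. For a volume-preserving variation with normal velocity given by the generatrix perturbation $v$ (so $\int_{-\pi}^{\pi} v=0$ to first order), one expects
\begin{equation}
\frac{d^2}{dt^2}\mathcal{P}_\alpha[E_t]\Big|_{t=0} = c\int_{-\pi}^{\pi} v\, D_uH_\alpha(u)v \,\dd{s},
\end{equation}
with $c>0$. The term coming from the second-order change of volume drops out because $H_\alpha(u)$ is constant and $\int v_{tt}$ is fixed by the volume constraint. Stability is therefore equivalent to $D_uH_\alpha(u)$ being nonnegative on the $L^2$-orthogonal complement of constants. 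A useful special case is when some mean-zero $v$ gives a negative quadratic form; this implies instability.

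\textbf{Part (1).} At $u\equiv R$ the operator is translation invariant, so it is diagonal in the basis $\cos(k s)$ with eigenvalues $\lambda_k(R)$. Here $\lambda_k(R)$ is a local-type term of order $k^{1+\alpha}$ minus a term coming from the interaction of the two boundary lines at distance $2R$. By scaling, $\lambda_k(R)$ depends on $kR$ through a fixed function $R^{-1-\alpha}\Lambda(kR)$. I will show that $\Lambda(\rho)<0$ for $\rho<R_1$, $\Lambda(R_1)=0$ and $\Lambda>0$ for $\rho>R_1$; this is consistent with $R_m=R_1/m$ in Theorem \ref{theorem_seq_bif}. Stability then reduces to the sign of $\lambda_1(R)$, since $\lambda_k$ is increasing in $k$. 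The asymptotics $R_1=O(\sqrt{1-\alpha})$ should follow by expanding $\Lambda$ as $\alpha\uparrow1$. After normalization by $(1-\alpha)$, the local part tends to $\rho^2$ (the Laplacian, cf. Remark \ref{remark:radius_alpha_1}). The interaction part is $O(1-\alpha)$ times a bounded positive quantity. The zero $\rho$ therefore satisfies $\rho^2\sim C(1-\alpha)$.

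\textbf{Part (3).} For $m\ge2$, the branch emanates from $R_m<R_1$, where $\lambda_1(R_m)<0$ with eigenfunction $\cos s$, which has mean zero. By continuity of $a\mapsto D_uH_\alpha(w_m(a))$ as bounded operators $C^{1,\beta}\to C^{\beta-\alpha}$ (smoothness of $H_\alpha$, as used in Theorem \ref{theorem_seq_bif}), the quadratic form in $v=\cos s$ stays negative for small $|a|$. This gives instability.

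\textbf{Part (2), the main obstacle.} At $R_1$ the kernel is $\cos s$, so the lowest mean-zero eigenvalue $\mu(a)$ of $D_uH_\alpha(w_1(a))$ vanishes at $a=0$. By evenness in $a$ (replacing $a\to-a$ corresponds to the shift $s\mapsto s+\pi$), $\mu(a)=\mu_2a^2+O(a^3)$. I will compute $\mu_2$ by standard Crandall–Rabinowitz-type exchange of stability: $\mu(a)$ is related to $a\gamma_1'(a)\,\partial_R\lambda_1$. Since $\gamma_1(a)=1+\gamma_2a^2+\dots$, the sign of $\mu_2$ equals that of $-\gamma_2\,\partial_R\lambda_1(R_1)$ up to a positive factor, with the volume constraint modifying the formula (projection off constants). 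The coefficient $\gamma_2$ requires the second and third derivatives of $H_\alpha$ at $R_1$ on modes $\cos s,\cos 2s$, and $1$. Here I would solve $v_2$ from the $a^2$ equation and plug it into the $a^3$ solvability condition. This gives an explicit but messy expression in integrals of the kernel. Its sign cannot be controlled for all $\alpha$, so I would expand each term as $\alpha\uparrow1$, using $R_1\to0$ from Part (1), and show that the leading term has the sign forcing $\mu_2<0$. This explicit asymptotic sign computation is the hard step and yields $\alpha_0$.
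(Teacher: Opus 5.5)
Your Parts (1) and (3) are fine. For (3), testing the quadratic form directly with $\cos s$ is simpler than the paper's route, which re-proves Lemma~\ref{lemma:lower_bound_rayleigh} for $L_m(a)$ and writes $\mathcal R_m(a)$ as a convex combination of two negative eigenvalues.

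The gap is in Part (2). Crandall--Rabinowitz exchange of stability, in the parametrization $(\gamma_1(a)R_1,w_1(a))$ of Theorem~\ref{theorem_seq_bif}, controls the \emph{unconstrained} critical eigenvalue $\mu_1(a)$ of $L(a)=\tfrac12 D_uH_\alpha(w_1(a))$. Indeed, projecting the $a^2$-equation onto $e_1$ as in Section~\ref{sec:computations} gives exactly $\tfrac12\ddot\mu_1(0)=-R_1\mu_1'(R_1)\ddot\gamma_1(0)$. However, for $a\neq 0$ the mean-zero space $V_0(0)^\perp$ is not invariant under $L(a)$, and stability is decided by $\mathcal R_1(a)$, not by $\mu_1(a)$. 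The paper shows that the constrained infimum is attained at $e_1(a)-Q(a)e_0(a)$; for instability alone, using this as a test function already suffices. This gives
\[
\mathcal R_1(a)=\frac{\mu_1(a)+\gamma Q(a)^2\mu_0(a)}{1+\gamma Q(a)^2}=\Big(\tfrac12\ddot\mu_1(0)+\gamma\mu_0\dot Q(0)^2\Big)a^2+O(a^3),\qquad \gamma\mu_0\dot Q(0)^2=\frac{1}{\mu_0}\,\frac{|\langle e_0,\dot L e_1\rangle|^2}{\langle e_0,e_0\rangle\langle e_1,e_1\rangle}<0,
\]
where $\dot Q(0)$ is read off from the $e_0$-component of $\dot e_1(0)$. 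Your phrase ``the volume constraint modifying the formula'' hides exactly this term. It is not a small correction: it cancels the entire $e_0$-contribution to $\ddot\mu_1(0)$ and decides the sign.

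Concretely, let $\alpha\uparrow 1$, so that $c=2R_1\to 0$. Then
\[
\frac{\langle e_0,\dot Le_1\rangle}{\langle e_0,e_0\rangle}=-\int_{\mathbb R}(1+\cos t)K_{z_2}(t,c)\,dt\approx 8c^{-2-\alpha},\qquad \mu_0\approx -4c^{-1-\alpha}.
\]
So the extra term is $\approx -32\,c^{-3-\alpha}$, whereas the $a^2$-coefficient of $\mathcal R_1(a)$ obtained in Section~\ref{sec:computations} is only about $-30\,c^{-3-\alpha}$. Hence $\ddot\mu_1(0)>0$, i.e.\ $\gamma_2<0$, near $\alpha=1$. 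The quantity $-\gamma_2\,\partial_R\lambda_1(R_1)$ you propose to evaluate is therefore \emph{positive}, and your plan would predict stability. The instability comes entirely from the constraint tilting against $e_0(a)$.

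To repair the argument you have two options:
\begin{itemize}
\item Use the paper's convex-combination formula (only the upper bound from the test function is needed for instability).
\item Run exchange of stability with the enclosed area as the bifurcation parameter, so that the linearization is the compression onto mean-zero functions. The relevant coefficient is then that of $a^2$ in $\bar w(a)=\frac1{2\pi}\int_{-\pi}^{\pi}w_1(a)$, namely
\[
\bar w_2=\gamma_2R_1+\frac1{2\pi}\int_{-\pi}^{\pi}\dot v_1(0)=\gamma_2R_1-\frac{\langle e_0,\dot Le_1\rangle}{2\mu_0\langle e_0,e_0\rangle},
\]
and the $a^2$-coefficient of $\mathcal R_1(a)$ equals $-2\mu_1'(R_1)\bar w_2$. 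The second, positive term in $\bar w_2$ is the one your formula omits, and it dominates.
\end{itemize}
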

Notice that the asymptotic formula \eqref{eq:asymptotic_c_alpha_1} is consistent with Remark \ref{remark:radius_alpha_1}, in which we saw that $R_1(\alpha=1)=0$.
\begin{remark}\label{remark:numerical_alpha_0}
    In fact, as we will see in Section \ref{sec:computations}, the stability of the first branch of near-cylinders, $E_1(a)$, is determined by the sign of an explicit function of $\alpha$. It is given by \eqref{expression_ddot_mu_hom_kernel}, and it is a special function expressed only in terms of the gamma and modified Bessel functions.\footnote{To be more precise, this function depends explicitly on $\alpha$ and $R_1$, and $R_1$ depends implicitly on $\alpha$.} The condition that $\alpha>\alpha_0$ in claim \textit{(2)} of Theorem \ref{theorem_stability_near_cylinders} above comes from the fact that we have rigorously established the behaviour of this function only in the limit $\alpha\uparrow 1$. However, by numerically calculating its values for $\alpha\in (0,1)$ we can see that $E_1(a)$ is in fact unstable for all $\alpha\in (0,1)$ (see Figure~\ref{fig:stability_vs_alpha} in Section~\ref{sec:computations}). This fact could possibly be made rigorous using interval analysis techniques.
\end{remark}

We note that, since our result is perturbative, Theorem \ref{theorem_stability_near_cylinders} states that the near-cylinders $E_m(a)$ are unstable for $\mathcal{P}_{\alpha}$ \textit{for $\abs{a}$ small enough}, i.e., when they are sufficiently ``flat''. For $\abs{a}$ large, it could be the case that they become stable (or even minimizers) for $\mathcal{P}_{\alpha}$. 

Throughout the whole paper, we say that a $2\pi$-periodic, cylindrically symmetric set $E$ with generatrix function $u$ is \textit{$\varepsilon$-$C^{1,\beta}$-flat} if there exists $R>0$ such that $\norm{u-R}_{C^{1,\beta}(\mathbb{T}^1)}<\varepsilon$; in other words, if $E$ is a small $C^{1,\beta}(\mathbb{T}^1)$ perturbation of a straight cylinder.

Now, the uniqueness result from Theorem \ref{theorem_seq_bif} allows us to show that every even, cylindrically symmetric volume-constrained critical point of $\mathcal{P}_{\alpha}$ that is sufficiently $C^{1,\beta}$-flat must be either a straight cylinder or one of the near-cylinders found in Theorem \ref{theorem_seq_bif}. From Theorem \ref{theorem_stability_near_cylinders}, we can then conclude that if such a critical point is also stable, then it must be a straight cylinder (since all non-straight near-cylinders are unstable). We state this result in Corollary \ref{cor::classification_minimizers} below.

We note also that by the results in \cite{CCM1}, we know that every volume-constrained minimizer of $\mathcal{P}_{\alpha}$ must be even and cylindrically symmetric. Hence, we also deduce that every sufficiently $C^{1,\beta}$-flat volume-constrained minimizer of $\mathcal{P}_{\alpha}$ is a straight cylinder. Note, however, that we do \emph{not} make any claims on the minimizing character of straight cylinders of any radius.

The precise statement is the following.
\begin{corollary}\label{cor::classification_minimizers}
    Let $R>0$, let $\alpha_0$ be as in Theorem \ref{theorem_stability_near_cylinders}, and let $\alpha\in [\alpha_0, 1)$ and $\alpha < \beta< \min\{1, 2 \alpha + \frac{1}{2}\}$. There exists $\varepsilon>0$, depending only on $\alpha$, $\beta$, and $R$, such that the following~holds. 

    If $E\subset \mathbb{R}^2$ is an even, cylindrically symmetric, volume-constrained critical point of $\mathcal{P}_{\alpha}$ that is $\varepsilon$-close to the straight cylinder of radius $R$ in $C^{1,\beta}_{even}(\mathbb{T}^1)$, then, either $E$ is a straight cylinder of radius $R'\geq R_1$, or $E$ is unstable, where $R_1$ is given by Theorem \ref{theorem_seq_bif}.

    The same holds for all $\alpha\in (0,1)$ when $R\neq R_1$.
\end{corollary}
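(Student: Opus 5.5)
The plan is to combine the local uniqueness statement of Theorem \ref{theorem_seq_bif} with the stability dichotomy of Theorem \ref{theorem_stability_near_cylinders}. First, let $E$ have generatrix $u$ with $\norm{u-R}_{C^{1,\beta}}<\varepsilon$. Being a regular even, cylindrically symmetric volume-constrained critical point of $\mathcal{P}_{\alpha}$, $E$ has constant NMC; this is the converse direction of Remark \ref{remark_constant NMC_critical_point}, coming from the Euler--Lagrange equation with a Lagrange multiplier. So $H_\alpha(u)=\lambda$ for some constant $\lambda$.

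The next step is to rewrite this as the equation $\mathcal{H}(R',u)=0$ for a suitable $R'$. For this I use that $R\mapsto h_R$ is continuous and strictly monotone near $R$, which follows from the explicit formula for $H_\alpha(R)$ in Section \ref{section:a_seq_of_bif_delaunay}; it should be decreasing, by scaling. Continuity of $H_\alpha$ in $C^{1,\beta}$ with values in $C^{\beta-\alpha}$ gives $|\lambda-h_R|\to 0$ as $\varepsilon\to0$. Hence there is a unique $R'$ near $R$ with $h_{R'}=\lambda$, and $(R',u)$ lies within any prescribed neighbourhood of $(R,u_R)$ once $\varepsilon$ is small. In particular, for $\varepsilon$ small depending on $\alpha,\beta,R$, the pair $(R',u)$ lies in the set $\mathcal{U}$ of Theorem \ref{theorem_seq_bif}.

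Theorem \ref{theorem_seq_bif} then says that either $u\equiv R'$, or $(R',u)=(\gamma_m(a)R_m,w_m(a))$ for some $m$ and some $a\neq0$. I split into cases.

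\emph{Trivial solution.} If $u\equiv R'$ and $R'\geq R_1$, we are in the first alternative. If $R'<R_1$, then $E$ is unstable by part (1) of Theorem \ref{theorem_stability_near_cylinders}.

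\emph{Nontrivial branch.} Since $\gamma_m(a)\to1$, and bifurcating solutions only appear near bifurcation points, $R'$ near $R$ forces $R_m$ to be near $R$. Shrinking $\varepsilon$, only the index $m$ with $R_m$ closest to $R$ can occur, and only if $R$ is close to that $R_m$; otherwise only trivial solutions are present. The flatness condition then forces $|a|$ small: we have $a\approx \frac{1}{\pi}\int u\cos(m\cdot)$, and $\norm{u-R'}\lesssim\varepsilon$. So parts (2) and (3) of Theorem \ref{theorem_stability_near_cylinders} apply and give instability. Part (2) needs $\alpha\geq\alpha_0$ when $m=1$. If $R\neq R_1$, we can choose $\varepsilon$ so small that $R_1$ is not within reach, so the $m=1$ branch never occurs. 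This gives the final sentence of the corollary, valid for all $\alpha\in(0,1)$.

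The main obstacle is making the smallness quantitative and uniform. The neighbourhood $\mathcal{U}$ and the thresholds $\nu_m$ and ``$|a|$ small enough'' from Theorem \ref{theorem_stability_near_cylinders} must be compatible with a single $\varepsilon$ depending only on $R$. I would handle this by noting that, near a fixed $R>0$, only finitely many $R_m$ matter, namely at most one. Then $\varepsilon$ can be taken as the minimum of finitely many thresholds, together with the continuity of $H_\alpha$ used in the second step.
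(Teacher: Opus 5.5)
Your proposal is correct and follows the paper's route: constant NMC, then $h_{R'}=\lambda$ via monotonicity of $R\mapsto h_R$, then the local uniqueness of Theorem \ref{theorem_seq_bif}, then the instability statements of Theorem \ref{theorem_stability_near_cylinders}. The paper argues by contradiction with a sequence $u_k\to u_R$ rather than with an explicit $\varepsilon$, and your observation that $a=\frac{1}{\pi}\int_{-\pi}^{\pi}u\cos(ms)\,ds$, hence $|a|\le 2\varepsilon$, makes explicit the smallness of $|a|$ that the paper uses only tacitly when invoking Theorem \ref{theorem_stability_near_cylinders}.
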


We remark that since in Theorem \ref{theorem_stability_near_cylinders} the assumption that $\alpha\geq \alpha_0$ is only needed to determine the stability of the first branch of near-cylinders, $E_1(a)$, Corollary \ref{cor::classification_minimizers} can be stated for all $\alpha\in (0,1)$ if one considers $R\neq R_1$, as we claim in the last part of its statement. Nonetheless, as we explained in Remark \ref{remark:numerical_alpha_0}, the condition that $\alpha\geq \alpha_0$ could probably be removed altogether by using interval analysis techniques to show the instability of the branch $E_1(a)$ for all $\alpha\in (0,1)$.

\definecolor{qqqqff}{rgb}{0,0,1}
\definecolor{ffqqqq}{rgb}{1,0,0}
\definecolor{negre}{rgb}{0,0,0}
\definecolor{taronja}{rgb}{1,0.64,0}
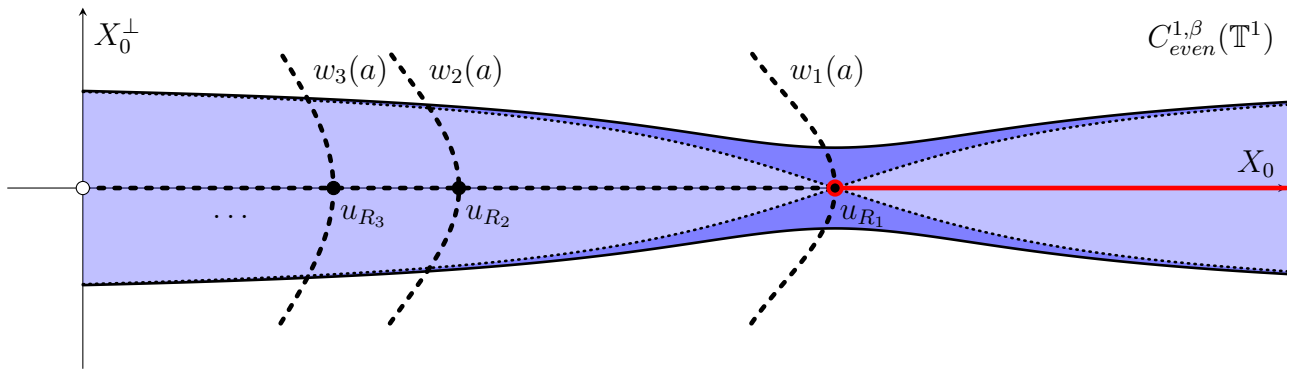
\begin{figure}[h!]
\centering
\begin{tikzpicture}[line cap=round,line join=round,>=triangle 45,x=1cm,y=1cm]
\begin{axis}[
x=1cm,y=1cm,
axis lines=middle,
xmin=-1,
xmax=16,
ymin=-2.4,
ymax=2.4,
xtick={0,10},
xticklabels={},
xlabel={$X_0$},
ytick={-35},
ylabel = {$X_0^\perp$}]
\clip(-1,-2.5) rectangle (16,2.5);

\fill[qqqqff,opacity=0.25,domain=0:10,samples=100] 
  plot(\x,{atan((10-\x)/3)*pi/180}) -- 
  plot[domain=10:0](\x,{-atan((10-\x)/3)*pi/180}) -- cycle;

\fill[qqqqff,opacity=0.25,domain=10:20,samples=100] 
  plot(\x,{atan((10-\x)/3)*pi/180}) -- 
  plot[domain=20:10](\x,{-atan((10-\x)/3)*pi/180}) -- cycle;

\fill[qqqqff,opacity=0.5] 
  plot[domain=0:18,samples=100] (\x,{sqrt((atan((10-\x)/3)*pi/180)^2+0.5)-0.17}) 
  -- plot[domain=18:0,samples=100] (\x,{abs(atan((10-\x)/3)*pi/180)})
  -- cycle;
\fill[qqqqff,opacity=0.5] 
  plot[domain=0:18,samples=100] (\x,{-sqrt((atan((10-\x)/3)*pi/180)^2+0.5)+0.17}) 
  -- plot[domain=18:0,samples=100] (\x,{-abs(atan((10-\x)/3)*pi/180)})
  -- cycle;


\draw[line width=1.75pt,color=black,dash pattern=on 2pt off 4pt,samples=100,domain=-1.8:1.8] plot({10+0.7*(cos(70*\x)-1)}, \x);

\draw[line width=1.75pt,color=black,dash pattern=on 2pt off 4pt,samples=100,domain=-1.8:1.8] plot({5+0.7*(cos(60*\x)-1)}, \x);

\draw[line width=1.75pt,color=black,dash pattern=on 2pt off 4pt,samples=100,domain=-1.8:1.8] plot({3.33+0.7*(cos(50*\x)-1)}, \x);

\draw [samples=500,color=black,dash pattern=on 2pt off 4pt,xshift=0cm,yshift=0cm,line width=1.75pt,domain=0:10)] plot (\x,0);

\draw [samples=500,color=ffqqqq,smooth,xshift=0cm,yshift=0cm,line width=1.75pt,domain=10:16)] plot (\x,0);

\draw[line width=1pt,color=black,dash pattern=on 0.5pt off 2pt,samples=100,domain=0:18] plot(\x,{atan((10-\x)/3)*pi/180});
\draw[line width=1pt,color=black,dash pattern=on 0.5pt off 2pt,samples=100,domain=0:18] plot(\x,{-atan((10-\x)/3)*pi/180});

\draw[line width=1pt,color=black,smooth,samples=100,domain=0:18] plot(\x,{sqrt((atan((10-\x)/3)*pi/180)^2+0.5)-0.17});
\draw[line width=1pt,color=black,smooth,samples=100,domain=0:18] plot(\x,{-sqrt((atan((10-\x)/3)*pi/180)^2+0.5)+0.17});

\begin{scriptsize}
\draw [fill=white] (0,0) circle (2.5pt);

\draw [line width=1.5pt, color=ffqqqq, fill=black] (10,0) circle (2.5pt);

\draw[color=negre] (10.375,-0.375) node {$u_{R_1}$};
\draw [fill=black] (5,0) circle (2.5pt);
\draw[color=black] (4.65+0.75,-0.375) node {$u_{R_2}$};
\draw [fill=black] (3.3333333333333335,0) circle (2.5pt);
\draw[color=black] (3.333+0.4,-0.375) node {$u_{R_3}$};

\draw[color=black] (2,-0.375) node {$\dots$};

\draw[color=negre] (9.9,1.56) node {$w_{1}(a)$};
\draw[color=negre] (5.1,1.56) node {$w_{2}(a)$};
\draw[color=negre] (3.56,1.56) node {$w_{3}(a)$};

\draw[color=negre] (15,2) node {$C^{1,\beta}_{even}(\mathbb{T}^1)$};

\end{scriptsize}
\end{axis}
\end{tikzpicture}

\caption{Representation of the space $C^{1,\beta}_{even}(\mathbb{T}^1)$ decomposed into $X_0$ (the subspace of constant functions) and $X_0^\perp$ (the subspace of functions orthogonal to $1$ in $L^2_{even}(\mathbb{T}^1)$). The horizontal axis corresponds to straight cylinders (that is, the trivial solutions $u_R$), and the vertical curves at each $u_{R_m}$ correspond to the near-cylinders of Theorem \ref{theorem_seq_bif}. As stated in Theorem \ref{theorem_stability_near_cylinders}, straight cylinders lying to the right of $u_{R_1}$ are stable, whereas those lying to its left are unstable. The dashed bifurcated curves are all unstable.
Corollary~\ref{cor::classification_minimizers} states that all critical points different from straight cylinders lying in the two shaded areas in the Figure are unstable when $\alpha\geq \alpha_0$. When $\alpha<\alpha_0$, we expect the same result to hold, but we can only rigorously conclude their instability in the lighter shaded area. Notice the pitchfork-like bifurcation occurring at $u_{R_1}$.}
\label{fig1}
\end{figure}

Prior to our work, Cabré, Csató, and Mas \cite{CabreCsatoMas_stability} showed that in every dimension $n\geq 2$ and for every $\alpha \in (0,1)$ there exists a critical radius $R_{\alpha}>0$ such that, for every $R>0$, the straight cylinder of radius $R$ is a stable constant-NMC surface of revolution if, and only if, $R\geq R^*$. Moreover, they proved that $R_{\alpha}^2\to n-2$ as $\alpha \uparrow 1$. Expression \eqref{eq:asymptotic_R1_alpha_1} in the first part of Theorem \ref{theorem_stability_near_cylinders} makes the rate of convergence precise in dimension $n=2$. Regarding the shape of minimizers, Dávila, Del Pino, Dipierro, and Valdinoci \cite{DavilaDelPinoDipierroValdinoci2016} proved, in every dimension $n\geq 2$ and for every $\alpha \in (0,1)$, that all volume-constrained minimizers of $\mathcal{P}_{\alpha}$ of sufficiently small volume must be close to an array of spheres in a measure-theoretic sense. In particular, their result shows that straight cylinders are not minimizers for small volumes. Corollary \ref{cor::classification_minimizers} above gives a complementary picture in dimension $n=2$ in the range of large volumes. Hence, our results support the conjecture that, like in the local case, volume-constrained minimizers of $\mathcal{P}_{\alpha}$ are either arrays of spheres or straight cylinders, depending on the size of the volume constraint.

This paper is structured as follows. In Section \ref{section:a_seq_of_bif_delaunay} we introduce the functional framework in which we will work, and we prove Theorem \ref{theorem_seq_bif}. In Section \ref{section:on_the_stab_of_periodic_NMC_surf_of_rev} we define the notion of stability for constant-NMC surfaces of revolution, and we give a simple characterization in terms of a quadratic form. Next, in Section \ref{section:on_the_eigenvalues}, we derive some important properties of the eigenvalues of the linearized NMC operator on the near-cylinders, which we then use in Section \ref{sec:stability_of_near_cyls} to prove Theorem \ref{theorem_stability_near_cylinders} and Corollary~\ref{cor::classification_minimizers}. Finally, in Section \ref{sec:computations} we include some the lengthy computations needed to prove the key results in sections \ref{section:on_the_eigenvalues} and \ref{sec:stability_of_near_cyls}.

\section{A sequence of bifurcated families of Delaunay near-cylinders in $\mathbb{R}^2$}\label{section:a_seq_of_bif_delaunay}
This section is devoted to the proof of Theorem \ref{theorem_seq_bif}. Moreover, here we introduce some results and notation that will be used throughout the whole paper.

Let $E = \{ (z_1,z_2)\in \mathbb{R}^2 : \abs{z_2}<u(z_1)\}$ for some positive function $u$, not necessarily periodic. Like in the introduction, we denote, with a small abuse of notation,
\begin{equation}\label{def:NMC_function}
    H_\alpha(u)(s):=H_\alpha[E](s,u(s)).
\end{equation}
The following lemma gives a simpler expression for the nonlocal mean curvature of $E$. We refer the reader to \cite[Lemma 4.1]{CNLMCDelCyl} or \cite[Lemma 3.1]{AlcoverBruera} for its simple proof.
\begin{lemma}[{\cite[Lemma 4.1]{CNLMCDelCyl}}] \label{NLMCLemma}
    Let $K(z)=\abs{z}^{-2-\alpha}$, with $\alpha \in (0,1)$. The nonlocal mean curvature of $E$ at the point $(s,u(s))\in \partial E$ is given by
    \begin{equation}\label{eq:simple_expr_NMC}
        \frac{1}{2} H_{\alpha}(u)(s) = \int_\mathbb{R} G\left(t,u(s)-u(s-t)\right) \dd{t}
        - \int_\mathbb{R} \left\{ G\left(t,u(s)+u(s-t)\right) - G\left(t,+\infty\right) \right\} \dd{t},
    \end{equation}
    where the integrals are to be understood in the principal value sense, and
    \begin{equation}
        G(\tilde{t},v) := \int_0^v K(\tilde{t},\tau) \dd{\tau}.
    \end{equation}
\end{lemma}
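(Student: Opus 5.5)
The plan is to compute the principal-value integral defining $H_\alpha[E]$ at the boundary point $x=(s,u(s))$ in vertical slices. I first write $y=(s-t,\, u(s)-\tau')$, so that $x-y=(t,\tau')$ and $K(x-y)=K(t,\tau')$, with $K(t,\tau):=(t^2+\tau^2)^{-1-\alpha/2}$. For fixed $t$, the vertical line $\{y_1=s-t\}$ meets $E$ in the segment $\abs{y_2}<u(s-t)$. In the variable $\tau=u(s)-y_2$ this segment is $\tau\in(u(s)-u(s-t),\,u(s)+u(s-t))$, and its complement on the line is the union of the two half-lines $\tau<u(s)-u(s-t)$ and $\tau>u(s)+u(s-t)$.

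Next I integrate $\chi_E-\chi_{E^c}$ against $K(t,\cdot)$ in $\tau$. I use that $K$ is even in $\tau$, so $G(t,\cdot)$ is odd and $\int_{-\infty}^{v}K(t,\tau)\,d\tau=G(t,+\infty)+G(t,v)$. This gives
\begin{equation}
\int_{\mathbb{R}}\{\chi_E-\chi_{E^c}\}K\,d\tau
= 2\bigl[G(t,u(s)+u(s-t))-G(t,u(s)-u(s-t))\bigr]
-\bigl[G(t,+\infty)+G(t,u(s)-u(s-t))\bigr]-\bigl[G(t,+\infty)-G(t,u(s)+u(s-t))\bigr].
\end{equation}
Simplifying, the right-hand side equals $-2G(t,u(s)-u(s-t))+2\{G(t,u(s)+u(s-t))-G(t,+\infty)\}$. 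Multiplying by $-1$ and integrating in $t$ yields exactly twice the right-hand side of \eqref{eq:simple_expr_NMC}. Along the way I note that $G(t,+\infty)=c_\alpha\abs{t}^{-1-\alpha}$ is finite for $t\neq 0$.

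The main obstacle is justifying that Fubini may be applied to the principal value, which has a singularity at $y=x$. I would handle this by replacing $\mathbb{R}^2$ with $\mathbb{R}^2\setminus Q_\varepsilon$, where $Q_\varepsilon=\{\abs{t}<\varepsilon\}$ is a strip. The contribution of $B_\varepsilon\triangle(\text{relevant region})$ vanishes, since for $u\in C^{1,\beta}$ the boundary is locally a graph and the standard cancellation of the odd part of $K$ works for balls and strips alike.

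For $\abs{t}\geq\varepsilon$ everything is absolutely integrable. The term $G(t,u(s)+u(s-t))-G(t,+\infty)$ is $O(\abs{t}^{-1-\alpha})\cdot O(\min\{1,\abs{t}/u\})$ at infinity and is harmless. Since $u(s)-u(s-t)=u'(s)t+O(\abs{t}^{1+\beta})$ and $G(t,\lambda t)$ is odd in $t$ by homogeneity, the first integral converges as a principal value near $t=0$, because $\beta>\alpha$. Letting $\varepsilon\to0$ then gives the identity with both integrals understood in the principal value sense.
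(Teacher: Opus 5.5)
Your argument is correct and follows the standard route. The paper does not prove this lemma itself; it defers to the cited works. The formula is obtained there by this same computation in vertical slices: Fubini off the strip $\{|t|<\varepsilon\}$, with the principal value recovered from $u\in C^{1,\beta}$, $\beta>\alpha$.

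There is one slip to fix. Your display for the $\tau$-integral has a spurious factor $2$ on the first bracket. Writing $a=u(s)-u(s-t)$ and $b=u(s)+u(s-t)$, it evaluates to $3[G(t,b)-G(t,a)]-2G(t,+\infty)$, which is not your ``simplified'' line. The correct expression is $[G(t,b)-G(t,a)]-[G(t,+\infty)+G(t,a)]-[G(t,+\infty)-G(t,b)]$. Equivalently, using $\chi_E-\chi_{E^c}=2\chi_E-1$, it is $2[G(t,b)-G(t,a)]-2G(t,+\infty)$. This does equal $-2G(t,a)+2\{G(t,b)-G(t,+\infty)\}$, so everything downstream is right.

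A minor wording point concerns the ball-versus-strip step. The cancellation does not come from an ``odd part of $K$'', since $K$ is even. It comes from the reflection $(t,\tau)\mapsto(-t,-\tau)$, which preserves $K$ and the region $\{|t|<\varepsilon\}\setminus B_\varepsilon$ while swapping the two sides of the tangent line $\tau=u'(s)t$. The deviation of the graph from that line costs $O(\varepsilon^{\beta-\alpha})$. The part of the strip with $|\tau|\ge\delta$ costs $O(\varepsilon\,\delta^{-1-\alpha})$.
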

With this result in hand, it is immediate to check that the NMC of a straight cylinder is constant and positive. Indeed, for a straight cylinder of radius $R$ we have $u=u_R\equiv R$, and thus
\begin{equation}\label{eq:expression_hR}
    h_R := H_{\alpha}(u_R) = -2 \int_\mathbb{R} \left\{ G\left(t,2R\right) - G\left(t,+\infty\right) \right\} \dd{t},
\end{equation}
which is positive because the function $G(t,\cdot)$ is increasing.

Now, to apply the implicit function theorem to the NMC operator, which is the key idea in the proof of Theorem \ref{theorem_seq_bif}, and to study the stability of the near-cylinders in Section \ref{sec:stability_of_near_cyls}, we need the NMC operator to be sufficiently smooth when viewed as an operator acting on functions in the sense of \eqref{def:NMC_function}. Thankfully for us, the regularity of the NMC operator was already proved by Cabré, Fall, and Weth in \cite[Section 5]{CabreFallWeth2018}. We state the precise result next.
\begin{lemma}[{\cite[Section 5]{CabreFallWeth2018}}]\label{lemma_H_Cinfty}
    Let $\alpha\in (0,1)$ and $\beta\in (\alpha, 1)$. The map
    \begin{equation}
        H_{\alpha} : \mathcal{O}\subset C^{1,\beta}(\mathbb{R}) \to C^{\beta-\alpha}(\mathbb{R}),
    \end{equation}
    with $\mathcal{O} = \{u\in C^{1,\beta}(\mathbb{R}) : \inf_{\mathbb{R}} u > 0\}$ and $H_{\alpha}$ as defined in \eqref{def:NMC_function}, is of class $C^\infty$.
\end{lemma}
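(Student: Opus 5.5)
The plan is to start from the representation of Lemma \ref{NLMCLemma} and split $\frac12 H_\alpha(u)=I_1(u)-I_2(u)$. Here $I_1$ is the singular principal-value integral involving $u(s)-u(s-t)$, and $I_2$ is the regular integral involving $u(s)+u(s-t)$. Each piece will be written as a composition of smooth pointwise (Nemytskii) maps with a fixed linear, or fractional-difference, structure. Its Fréchet derivatives of every order will then be estimated in $C^{\beta-\alpha}$.

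\emph{The regular part $I_2$.} Since $u\in\mathcal O$, we have $u(s)+u(s-t)\ge 2\inf u=:2\delta>0$. Set $\Phi(t,v):=G(t,v)-G(t,+\infty)=-\int_v^\infty K(t,\tau)\,d\tau$. For $v\ge\delta$, every $\partial_v^k\Phi(t,v)$ is smooth. It is also bounded by $C_{k,\delta}(1+|t|)^{-1-\alpha}$, because $\int_\delta^\infty (t^2+\tau^2)^{-1-\alpha/2}d\tau\lesssim \min\{1,|t|^{-1-\alpha}\}$. Hence $u\mapsto \int_{\mathbb R}\Phi(t,u(\cdot)+u(\cdot-t))\,dt$ is $C^\infty$ from $\mathcal O$ into $C^{0,1}\subset C^{\beta-\alpha}$. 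To see this, the $k$-th derivative is the multilinear map $(v_1,\dots,v_k)\mapsto\int\partial_v^k\Phi(t,\cdot)\prod_i(v_i(\cdot)+v_i(\cdot-t))\,dt$. This map is bounded in $C^1$ by dominated convergence, and Taylor remainders are controlled by the $\partial_v^{k+1}$ bound, uniformly on $C^{1,\beta}$-neighbourhoods where $\inf u\ge\delta/2$.

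\emph{The singular part $I_1$.} I will use homogeneity. Substituting $\tau=|t|\sigma$ gives $G(t,v)=|t|^{-1-\alpha}g(v/|t|)$ with $g(p)=\int_0^p(1+\sigma^2)^{-1-\alpha/2}d\sigma$. This $g$ is odd, smooth, and has all derivatives bounded. Next write $u(s)-u(s-t)=t\,p_u(s,t)$ with $p_u(s,t)=\int_0^1u'(s-\lambda t)\,d\lambda$. Oddness of $g$ then gives
\[
I_1(u)(s)=\int_0^\infty t^{-1-\alpha}\bigl[g(p_u(s,t))-g(p_u(s,-t))\bigr]\,dt .
\]
Note that $p_u$ is linear in $u$, and $|p_u(s,t)-p_u(s,-t)|\le C[u']_{\beta}|t|^\beta$. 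Because $\beta>\alpha$, the integral converges absolutely near $t=0$; at infinity, boundedness of $g$ suffices. Formally differentiating $k$ times gives the multilinear map
\[
(v_1,\dots,v_k)\mapsto \int_0^\infty t^{-1-\alpha}\Bigl[g^{(k)}(p_u)\prod_i p_{v_i}\Big|_{(s,t)}-g^{(k)}(p_u)\prod_i p_{v_i}\Big|_{(s,-t)}\Bigr]dt .
\]
The bracket is again $O(|t|^\beta)$ times $\prod\|v_i\|_{C^{1,\beta}}$, by the Hölder continuity of $p$ in $t$ and the smoothness of $g^{(k)}$. The same computation applied to Taylor remainders will show that these are genuine Fréchet derivatives, and that they are continuous in $u$.

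\emph{Main obstacle: the $C^{\beta-\alpha}$ estimate in $s$.} The $L^\infty$ bounds above are immediate. The key step is to show that each multilinear expression is $(\beta-\alpha)$-Hölder in $s$. For $s_1,s_2$ with $d=|s_1-s_2|$, I will split the $t$-integral into $t<d$ and $t>d$. For $t<d$, I bound each term separately by $\int_0^d t^{-1-\alpha}t^\beta\,dt\sim d^{\beta-\alpha}$. For $t>d$, I bound the difference in $s$ of the integrand by $C\,d^\beta$, using the $\beta$-Hölder continuity of $u'$ and $v_i'$ in the variable $s-\lambda t$ together with the Lipschitz bounds on $g^{(k)}$, $g^{(k+1)}$. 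This gives $\int_d^\infty t^{-1-\alpha}d^\beta\,dt\sim d^{\beta-\alpha}$. Some care is needed with the product structure: one expands $A(s_1)B(s_1)-A(s_2)B(s_2)$ telescopically, and uses the cancellation between $\pm t$ only in the small-$t$ region. Combining the two parts yields that $H_\alpha$ is $C^\infty$ from $\mathcal O$ to $C^{\beta-\alpha}(\mathbb R)$.
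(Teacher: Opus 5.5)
Your proof is correct. Note that the paper itself does not prove this lemma: it imports it from \cite[Section 5]{CabreFallWeth2018}, where smoothness of the NMC operator is established for cylindrically symmetric sets in every dimension $n\ge 2$. What you give instead is a self-contained proof specialized to $n=2$, and its key steps check out:
\begin{itemize}
\item Homogeneity gives $G(t,v)=|t|^{-1-\alpha}g(v/|t|)$, with $g$ odd and bounded together with all its derivatives.
\item Writing $u(s)-u(s-t)=t\,p_u(s,t)$, with $p_u$ linear in $u$, turns the principal value into the absolutely convergent integral $\int_0^\infty t^{-1-\alpha}[g(p_u(s,t))-g(p_u(s,-t))]\,dt$.
\item Splitting at $t=|s_1-s_2|$ gives the $C^{\beta-\alpha}$ bound. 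Below the threshold you use the $O(t^\beta)$ cancellation between $\pm t$, which needs $\beta>\alpha$. Above it you use the $O(|s_1-s_2|^\beta)$ modulus of $p_u,p_{v_i}$ in $s$, which needs $\alpha>0$.
\item The regular part is even $C^\infty$ into bounded Lipschitz functions.
\end{itemize}

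The step you only gesture at, the Fréchet property, also closes, again because $u\mapsto p_u$ is linear. The Taylor remainder of $g^{(k)}(p_{u+h})$ equals $\int_0^1(1-\theta)\,g^{(k+2)}(p_{u+\theta h})\,d\theta\;p_h^2$. This is an average of the same multilinear expressions with $g^{(k+2)}$ in place of $g^{(k)}$, so your estimate bounds it by $C\|h\|_{C^{1,\beta}}^2$ in the norm of multilinear maps. The same device gives continuity of the derivatives in $u$.

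As for what each route buys: yours is short and shows that the only inputs are $\beta>\alpha$, boundedness of the one-variable profile $g$ and its derivatives, and decay of $\partial_v^kK(t,v)$ for $v$ bounded away from $0$. It also shows the singular part needs no positivity of $u$. That is exactly what one would have to verify for the anisotropic kernels of Remark~\ref{remark:generalizations}: there the profile becomes $\int_0^p K(1,\sigma)\,d\sigma$, so the regularity of $H_\alpha$ is governed by that of $\sigma\mapsto K(1,\sigma)$. The cited result, by contrast, covers all dimensions. There the angular variables enter the kernel, and the reduction to a single one-variable profile is no longer available.
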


The following key lemma states that the linearization of the NMC operator on every straight cylinder diagonalizes in the Fourier basis of cosines, and it gives some important properties of its eigenvalues. This result was already proved in \cite[Proposition 5.1 and Lemma 5.3]{CNLMCDelCyl}. We include its proof here, since we will use it extensively throughout the paper.
\begin{lemma}[{\cite[Proposition 5.1 and Lemma 5.3]{CNLMCDelCyl}}]\label{lemma:diagonalization_L}
    Let $K(z)=\abs{z}^{-2-\alpha}$, with $\alpha\in (0,1)$, and let $R>0$. The functions $e_k:=\cos(k\cdot)$ are eigenfunctions of the operator
    \begin{equation}
    L_R\psi:=\frac{1}{2}D_u H_{\alpha}(R)\psi = \int_{\mathbb{R}} (\psi(s)-\psi(s-t))K(t,0) \dd{t} - \int_{\mathbb{R}} (\psi(s)+\psi(s-t))K(t,2R)\dd{t}
    \end{equation}
    with eigenvalues
    \begin{equation}\label{eq::expression_eigenvalues}
        \mu_k(R) := \int_{\mathbb{R}}(1-\cos(kt))K(t,0)\dd{t} - \int_{\mathbb{R}}(1+\cos(kt))K(t,2R)\dd{t}.
    \end{equation}

    Moreover, the following hold:
    \begin{enumerate}
        \item For every $R>0$, the eigenvalues form an increasing sequence, i.e., $\mu_0 < \mu_1 < \mu_2 < \cdots$, and $\mu_0<0$.
        \item For every $R>0$, the limit $\lim_{k\to +\infty} k^{-1-\alpha}\mu_k(R)$ is positive and finite.
        \item For every $k\geq 0$, $\mu_k(R)$ is an increasing function of $R$. As a consequence, for every $m\geq 1$ there exists a unique $R_m>0$ such that $\mu_m(R_m)=0$. Moreover, $R_m = R_1/m$.
    \end{enumerate}
\end{lemma}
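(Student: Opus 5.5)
The plan is to get everything from the explicit representation \eqref{eq:simple_expr_NMC} of Lemma~\ref{NLMCLemma} together with a scaling identity. The smoothness of $H_\alpha$ comes from Lemma~\ref{lemma_H_Cinfty}.

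\textbf{The linearization.} Set $u=R+\varepsilon\psi$ and differentiate at $\varepsilon=0$. Since $\partial_v G(t,v)=K(t,v)$, the first integral in \eqref{eq:simple_expr_NMC} contributes $\int_{\mathbb{R}} K(t,0)(\psi(s)-\psi(s-t))\dd{t}$. The second contributes $-\int_{\mathbb{R}} K(t,2R)(\psi(s)+\psi(s-t))\dd{t}$. The term $G(t,+\infty)$ does not depend on $u$.

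The exchange of derivative and principal value integral is legitimate for $\psi\in C^{1,\beta}$. Indeed, $\psi(s)-\psi(s-t)=\psi'(s)t+O(|t|^{1+\beta})$, the odd part cancels in the PV, and the remainder is integrable against $|t|^{-2-\alpha}$ because $\beta>\alpha$. Alternatively, one can simply quote the differentiability in Lemma~\ref{lemma_H_Cinfty}.

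\textbf{The eigenvalues.} Take $\psi=e_k$ and write $\cos(k(s-t))=\cos(ks)\cos(kt)+\sin(ks)\sin(kt)$. Both $K(\cdot,0)$ and $K(\cdot,2R)$ are even in $t$, so the $\sin(kt)$ terms integrate to zero. This gives $L_Re_k=\mu_k(R)e_k$ with $\mu_k$ as in \eqref{eq::expression_eigenvalues}.

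Write $\mu_k=A_k-B_k$, where
\[
A_k=\int_{\mathbb{R}}(1-\cos kt)|t|^{-2-\alpha}\dd{t},\qquad B_k(R)=\int_{\mathbb{R}}(1+\cos kt)(t^2+4R^2)^{-\frac{2+\alpha}{2}}\dd{t}.
\]
The substitution $t=\tau/k$ gives $A_k=k^{1+\alpha}A_1$ and $B_k(R)=k^{1+\alpha}B_1(kR)$. Hence
\[
\mu_k(R)=k^{1+\alpha}\mu_1(kR).
\]

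\textbf{The three properties.} For (3), the integrand of $B_k(R)$ is nonnegative and decreasing in $R$. So $B_k$ decreases in $R$ and $\mu_k$ is increasing in $R$, strictly so.
\begin{itemize}
\item As $R\to0$, $B_k(R)\ge\int_{|t|<R}(t^2+4R^2)^{-\frac{2+\alpha}{2}}\dd{t}\gtrsim R^{-1-\alpha}\to\infty$, because $1+\cos kt\ge1$ near $t=0$.
\item As $R\to\infty$, $B_k(R)\to0$ by dominated convergence, while $A_m>0$ for $m\ge1$.
\item Continuity and strict monotonicity then give a unique zero $R_m$ of $\mu_m$. The scaling identity gives $\mu_m(R)=0$ if and only if $\mu_1(mR)=0$, that is, $R_m=R_1/m$.
\end{itemize}

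For (2), note that $k^{-1-\alpha}\mu_k=A_1-k^{-1-\alpha}B_k(R)$. Since $B_k(R)\le 2\int_{\mathbb{R}} K(t,2R)\dd{t}$ is bounded in $k$, the limit equals $A_1>0$.

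For (1), $\mu_0=-2\int_{\mathbb{R}} K(t,2R)\dd{t}<0$. Also $A_k$ is strictly increasing in $k$, since $A_k=k^{1+\alpha}A_1$. It therefore remains to show that $k\mapsto\int_{\mathbb{R}}\cos(kt)(t^2+a^2)^{-s}\dd{t}$ is nonincreasing for $k\ge0$, where $s=\frac{2+\alpha}{2}$ and $a=2R$. This is the step that needs an idea.

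My first attempt is subordination:
\[
(t^2+a^2)^{-s}=\Gamma(s)^{-1}\int_0^\infty \lambda^{s-1}e^{-\lambda(t^2+a^2)}\dd{\lambda}.
\]
This writes the kernel as a positive mixture of Gaussians. Each Gaussian has cosine transform $\sqrt{\pi/\lambda}\,e^{-k^2/(4\lambda)}$, which is positive and strictly decreasing in $k$. Fubini then shows that $B_k$ is strictly decreasing in $k$.

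As a backup, one can use the closed form of this transform, which is proportional to $(k/a)^{s-1/2}\mathcal{K}_{s-1/2}(ak)$ with $\mathcal{K}_\nu$ the modified Bessel function of the second kind. Monotonicity then follows from the classical fact that $z\mapsto z^\nu\mathcal{K}_\nu(z)$ is decreasing.

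Either way $\mu_k=A_k-B_k$ is strictly increasing in $k$, which completes (1).
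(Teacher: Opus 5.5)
Your proof is correct. For the diagonalization and items (2)--(3) it is the paper's argument: the scaling $\mu_k(R)=k^{1+\alpha}\mu_1(kR)$, monotonicity in $R$ from $K_{z_2}<0$, the limits as $R\downarrow 0$ and $R\uparrow\infty$, and the intermediate value theorem.

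The genuine difference is the monotonicity in $k$ in (1).

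\textbf{The paper's route.} The paper writes $\mu_k(R)=k^{1+\alpha}\eta_k$ with $\eta_k=\mu_1(kR)$. It observes that $\eta_k$ increases because $K_{z_2}<0$, and concludes that $\mu_k$ increases. This uses only the sign of $K_{z_2}$ and homogeneity. However, the last step is only conclusive where $\eta_k\ge 0$, i.e.\ where $kR\ge R_1$, because
\[
\mu_{k+1}-\mu_k=(k+1)^{1+\alpha}(\eta_{k+1}-\eta_k)+\bigl((k+1)^{1+\alpha}-k^{1+\alpha}\bigr)\eta_k ,
\]
and the last term is negative when $kR<R_1$. That is exactly the regime $R=R_m$, $m\ge 2$, in which the paper later relies on the ordering and simplicity of the $\mu_k$.

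\textbf{Your route.} Your split $\mu_k=A_k-B_k$ isolates what is really needed. The term $A_k=k^{1+\alpha}A_1$ increases trivially. The term
\[
B_k(R)=\int_{\mathbb{R}}K(t,2R)\,dt+\int_{\mathbb{R}}\cos(kt)K(t,2R)\,dt
\]
decreases precisely because the cosine transform of $K(\cdot,2R)$ is decreasing in the frequency. This property does not follow from the sign of $K_{z_2}$. Your subordination to Gaussians establishes it for every $R>0$ and every $k\ge 0$.

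\textbf{What each buys.} Your argument uses more of the specific structure of $|z|^{-2-\alpha}$, namely that $K(\cdot,2R)$ is a positive mixture of Gaussians in $t$. In exchange, it proves the strict ordering $\mu_0<\mu_1<\mu_2<\cdots$ for all radii, not only for $R\ge R_1$.
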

\begin{proof}
    The first part of the lemma is an easy consequence of Lemma \ref{NLMCLemma} and of the identities
    \begin{equation}
        \cos(s)\pm\cos(s-t) = (1\pm\cos(t))\cos(s) \pm \sin(s)\sin(t).
    \end{equation}
    Indeed, differentiating \eqref{eq:simple_expr_NMC} at $u\equiv R$ and applying the previous identity we obtain
    \begin{align}
        L_R[\cos(k\cdot)](s) &= \left(\int_{\mathbb{R}} (1-\cos(kt))K(t,0)\dd{t} - \int_{\mathbb{R}}(1+\cos(kt))K(t,2R)\dd{t}\right)\cos(ks) \\[4 pt] &= \mu_k(R)\cos(ks),
    \end{align}
    since the integrals of $\pm \sin(s)\sin(t)$ against $K(t,0)$ and $K(t,2R)$, respectively, vanish (in the principal value sense) because the former are odd functions of $t$ and the latter are even in $t$.

    To prove \textit{(1)}, notice first that for $k=0$ we have
    \begin{equation}
        \mu_0(R) = -\int_{\mathbb{R}}2K(t,2R)\dd{t} < 0.
    \end{equation}

    To see that $(\mu_k(R))$ is an increasing sequence, notice that, changing variables in the expression for $\mu_k$ and using the homogeneity of $K$ we obtain
    \begin{equation}\label{eq:mu_k_change_vars}
        \mu_k(R) = k^{1+\alpha}\left(\int_{\mathbb{R}} (1-\cos(\bar{t}))K(\bar{t},0)\dd{\bar{t}} - \int_{\mathbb{R}}(1+\cos(\bar{t}))K(\bar{t},2Rk)\dd{\bar{t}}\right).
    \end{equation}
    Now, since $K_{z_2}(z)=-(2+\alpha)\abs{z}^{-4-\alpha}z_2<0$ for all $z\in \mathbb{R}^2$ with $z_2>0$, the sequence
    \begin{equation}
        \eta_k := \int_{\mathbb{R}} (1-\cos(\bar{t}))K(\bar{t},0)\dd{\bar{t}} - \int_{\mathbb{R}}(1+\cos(\bar{t}))K(\bar{t},2Rk)\dd{\bar{t}}
    \end{equation}
    is increasing in $k$ for every $R>0$. Hence, we conclude that the sequence $(\mu_k)$ is also increasing. 

    Letting $k\to +\infty$ in \eqref{eq:mu_k_change_vars} and applying the dominated convergence theorem yields \textit{(2)}.

    Now, to prove \textit{(3)}, notice that, again, since $K_{z_2}(z)<0$ whenever $z_2>0$, we have that every eigenvalue $\mu_k(R)$ is an increasing function of $R$. On the other hand, we have that
    \begin{equation}
        \lim_{R\downarrow 0} \mu_k(R) = -\infty \quad \text{and}\quad \lim_{R\uparrow +\infty} \mu_k(R) = \int_{\mathbb{R}}(1-\cos(kt))K(t,0)\dd{t} >0.
    \end{equation}
    Therefore, by Bolzano's theorem, we conclude that the equation $\mu_k(R)=0$ has a unique solution for every $k\geq 1$.

    To see that that $R_m = R_1/m$ for every $m\geq 1$, we change variables in the equation $\mu_m(R_m)=~0$ and use the homogeneity of $K$ to obtain
    \begin{equation}\label{eq_for_Rn_after_change_var}
    \int_{\mathbb{R}} \left\{(1-\cos(\bar{t}))K\left(\bar{t},0\right) - (1+\cos(\bar{t}))K\left(\bar{t},2mR_{m}\right) \right\}\dd{\bar{t}}=0,
    \end{equation}
    which is precisely the equation for $R_1$. In other words, $mR_m$ solves $\mu_1(mR_m)=0$. Since the equation $\mu_1(R)=0$ has a unique solution, we conclude that $mR_m = R_1$.
\end{proof}

We now have all the necessary ingredients to prove Theorem \ref{theorem_seq_bif}.
\begin{proof}[Proof of Theorem \ref{theorem_seq_bif}]
We divide the proof into several steps.

\proofstep{Step 1. Existence of bifurcated curves of solutions.} The proof of existence of the bifurcation points and the construction of the bifurcated families is essentially the same as the one of \cite[Theorem 1.2]{CNLMCDelCyl}---see also \cite[Theorem 1.3]{AlcoverBruera} for a presentation more similar to the one in this paper---, where the existence of the first branch ($m=1$) is addressed. We sketch here the main ideas.

The proof is an application of the implicit function theorem to the equation 
\begin{equation}
    \Phi(a, \gamma, \varphi) = 0,
\end{equation}
with
\begin{equation}
    \Phi : \mathbb{R}\times \mathbb{R}\times C^{1,\beta}_{even}(\mathbb{T}^1)\to C^{\beta - \alpha}_{even}(\mathbb{T}^1)
\end{equation}
defined by
\begin{align}
    \Phi(a,\gamma,\varphi) &:= \frac{1}{2a}\left\{H_{\alpha}(\gamma R + a \varphi) - H_{\alpha}(\gamma R)\right\}\\
    &= \begin{multlined}[t][0.8\displaywidth]
    \int_\mathbb{R} \frac{1}{a} G\left(t,a (\varphi(s)-\varphi(s-t))\right) \dd{t}\\ 
    - \int_{\mathbb{R}} \frac{1}{a} \left\{ G\left(t,2\gamma R + a(\varphi(s)+\varphi(s-t))\right) - G\left(t,2\gamma R\right) \right\}\dd{t}\end{multlined}
\end{align}
whenever $a\neq 0$, and by
\begin{equation}
    \Phi(0,\gamma, \varphi):=\frac{1}{2}D_{u}H_{\alpha}(\gamma R)\varphi = \int_{\mathbb{R}} (\varphi(s)-\varphi(s-t))K(t,0)-\int_{\mathbb{R}}(\varphi(s)+\varphi(s-t))K(t,2 \gamma R)\dd{t}
\end{equation}
when $a=0$, for some appropriately chosen value of $R$, specified next. It is easy to see by changing variables in expression \eqref{eq:simple_expr_NMC} that the operator $H_{\alpha}$ (and thus also $\Phi$) maps $2\pi$-periodic, even functions to $2\pi$-periodic, even functions. Hence, by Lemma \ref{lemma_H_Cinfty}, we will see later that the operator $\Phi$ is well defined and smooth in an appropriate open set.

For every $m\geq 1$, we set $R=R_m=R_1/m$, as given by Lemma \ref{lemma:diagonalization_L} above, and we look for solutions of the equation $\Phi(a,\gamma, \varphi)=0$ that are of the form
\begin{equation}
    \varphi(s) = \cos(ms) + v(s)
\end{equation}
for some $v\in C^{1,\beta}_{even}(\mathbb{T}^1)$ orthogonal to $\cos(m\cdot)$ in $L^2_{even}(\mathbb{T}^1)$. In the sequel, we denote 
\begin{equation}
\overline{\Phi}_m(a, \gamma, v) := \Phi(a, \gamma, \cos(m\cdot)+v),
\end{equation}
with $R=R_m$. Notice that for this value of $R$, by Lemma \ref{lemma:diagonalization_L} we have that
\begin{equation}
    \overline{\Phi}_m(0,1,0)=\frac{1}{2}D_u H_{\alpha}(R_m)\cos(m\cdot) = 0.
\end{equation}
Moreover, by Lemma \ref{lemma_H_Cinfty}, the operator $\overline{\Phi}_m$ is $C^\infty$ in a neighbourhood of $(0,1,0)$.

To apply the implicit function theorem, it only remains to check that the linearized operator $D_{(\gamma,v)}\overline{\Phi}_m$ is invertible at $(0,1,0)$. We have
\begin{equation}
  D_{v}\overline{\Phi}_m(0,1,0) \psi(s) = L_R\psi(s)=\int_{\mathbb{R}} \{(\psi(s)-\psi(s-t))K(t,0)- (\psi(s)+\psi(s-t))K(t,2R_m)\} \dd{t}
\end{equation}
and
\begin{equation}\label{eq:der_phi_gamma}
    D_{\gamma}\overline{\Phi}_m(0,1,0) (s)= -\int_{\mathbb{R}}(\cos(ms)+\cos(m(s-t)))2R_mK_{z_2}(t,2R_m) \dd{t}=: \kappa_m \cos(m s),
\end{equation}
with
\begin{equation}
    \kappa_m = -\int_{\mathbb{R}} (1+\cos(mt))2R_mK_{z_2}(t,2R_m) \dd{t}>0
\end{equation}
because $K_{z_2}(z)=-(2+\alpha)\abs{z}^{-4-\alpha}z_2<0$ for all $z\in \mathbb{R}^2$ with $z_2>0$.

By Lemma \ref{lemma:diagonalization_L}, the operator $L_R$ diagonalizes in $H^{1+\alpha}_{even}(\mathbb{T}^1)$ (the fractional Sobolev space of $2\pi$-periodic, even functions) in the basis of eigenfunctions $\cos(k \cdot)$ with eigenvalues $\mu_k$ given by \eqref{eq::expression_eigenvalues} with $R=R_m$, and, moreover, $\mu_k(R)\neq 0$ for all $k\neq m$. As a consequence (see Step 4 in the proof of \cite[Proposition 3.3]{AlcoverBruera} for a proof of the following facts), the operator $L$ can be shown to be invertible when considered as an operator between the spaces $V_1^\perp\cap H^{1+\alpha}_{even}(\mathbb{T}^1)$ and $V_1^\perp$, with $V_1^\perp := \{v \in L^2_{even}(\mathbb{T}^1) : \expval{v,\cos(m\cdot)}=0\}$. Using regularity estimates for the fractional Laplacian, one can see that it is in fact also invertible when considered as an operator between the spaces $V_1^{\perp}\cap C^{1,\beta}_{even}(\mathbb{T}^1)$ and $V_1^\perp\cap C^{\beta-\alpha}_{even}(\mathbb{T}^1)$. Since, moreover, we have that $D_{\gamma}\overline{\Phi}_m(0,1,0)\in \mathrm{span}\{\cos(m\cdot)\}$, it follows that the full differential $D_{(\gamma, v)}\overline{\Phi}_m$ is invertible as an operator between $C^{1,\beta}_{even}(\mathbb{T}^1)$ and $C^{\beta- \alpha}_{even}(\mathbb{T}^1)$.

Hence, the existence of the smooth curve of solutions $\{(\gamma_m(a)R_m, w_m(a)):a\in (- \nu_m, \nu_m)\}$ follows from the implicit function theorem applied to the operator $\overline{\Phi}_m$ at $(0,1,0)$. Note that, by construction, for every $m\geq 1$ and every $a\in (-\nu_m, \nu_m)$, the cylindrically symmetric set generated by $w_m(a)$ has constant NMC equal to $h_{\gamma_m(a)R_m}=H_{\alpha}(\gamma_m(a) R_m)$.

\proofstep{Step 2. Local uniqueness of solutions.}
Since, by Lemma \ref{lemma:diagonalization_L}, the eigenvalues $\mu_k(R)$ are increasing in $R$ for every $k \geq 0$, and in $k$ for every $R > 0$, by the same arguments as in the end of Step 1 we deduce, on the one hand, that the operator $D_u H_{\alpha}(u_R)$ is invertible for every $R\in (R_m,R_{m-1})$ and every $m\geq 1$ (with the understanding that $R_{0}=+\infty$). On the other hand, when $R=R_m$, by Step 1 we know that the operator $D_{(\gamma, \varphi)}\overline{\Phi}_m(0,1,0)$ is invertible for every $m\geq 1$. The claim on uniqueness is therefore a direct consequence of the implicit function theorem applied to the operators $H_{\alpha}$ and $\overline{\Phi}_m$ at $u\equiv R \in (R_m, R_{m-1})$ and $(0,1,0)$, respectively, for every $m\geq 1$.
\end{proof}

The following proposition gathers a few properties of the curves of bifurcated solutions. Item \textit{(2)} in particular---more precisely, the fact that $\derivative{a}\big\vert_{a=0}\gamma_m = 0$---will be an important ingredient in our proof of Proposition \ref{proposition:stab_straight_cylinders}, where it will allow us to simplify some calculations.
\begin{proposition}\label{proposition:dot_gamma}
In the setting and the notation of Theorem \ref{theorem_seq_bif}, for every $m\geq 1$ and every $a\in (-\nu_m, \nu_m)$, writing $m=2^{p}q$ for some integers $p\geq 0$ and $q$ odd,
\begin{enumerate}
    \item The function $w_m(a)$ has prime period equal to $2\pi/(2^p\ell)$ for some positive integer $\ell$ dividing $q$. As a consequence, the Fourier series of $v_m(a)$ is given by
    \begin{equation}
        v_m(a)(s) = \sum_{k\neq q/\ell} c_{2^p \ell k}(a) \cos(2^p \ell k s).
    \end{equation}
    \item The following relations hold:
    \begin{equation}
        \gamma_m(a)=\gamma_m(-a) \quad \text{and} \quad v_m(a)(\cdot+\pi/2^p)=- v_m(-a)(\cdot).
    \end{equation}
    In particular, $\derivative{a}\big\vert_{a=0}\gamma_m=0$.
\end{enumerate}
\end{proposition}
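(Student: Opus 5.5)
The whole argument rests on two symmetries of $H_\alpha$ together with the local uniqueness part of Theorem~\ref{theorem_seq_bif}. Define the translation $(\tau_c u)(s):=u(s+c)$. By Lemma~\ref{NLMCLemma} (or directly from the definition), $H_\alpha$ commutes with translations: $H_\alpha(\tau_c u)=\tau_c H_\alpha(u)$. Moreover, $\tau_c$ preserves $2\pi$-periodicity. It preserves evenness exactly when $c$ is a multiple of half a period of $u$.

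\textbf{Item (2).} Take $c=\pi/m$. Since $\cos(m(s+\pi/m))=-\cos(ms)$, we have
\[
\tau_{\pi/m} w_m(a)=\gamma_m(a)R_m + (-a)\bigl(\cos(m\cdot) - \tau_{\pi/m}v_m(a)\bigr).
\]
The steps are then as follows.
\begin{enumerate}
\item Check that $\tau_{\pi/m}w_m(a)$ is even. This holds because $w_m(a)(-s+\pi/m)=w_m(a)(s-\pi/m)=w_m(a)(s+\pi/m)$, using evenness and then $2\pi/m$-periodicity. That periodicity itself needs justification, for instance by first applying uniqueness in the subspace of $2\pi/m$-periodic even functions.
\item Check that $-\tau_{\pi/m}v_m(a)$ is still $L^2$-orthogonal to $\cos(m\cdot)$.
\item Note that $\tau_{\pi/m}w_m(a)$ solves $\mathcal H=0$ with the same parameter $\gamma_m(a)R_m$, since constants are translation invariant.
\item Observe that $\tau_{\pi/m}w_m(a)$ is as close to $u_{R_m}$ as $w_m(a)$ is, so it lies in the neighbourhood $\mathcal U$.
\end{enumerate}
By the uniqueness statement (more precisely, uniqueness in the implicit function theorem for $\overline\Phi_m$ near $(0,1,0)$, applied with parameter $-a$), we get $\gamma_m(-a)=\gamma_m(a)$ and $v_m(-a)=-\tau_{\pi/m}v_m(a)$.

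To obtain the shift $\pi/2^p$ claimed in the statement rather than $\pi/m$, I would use item (1). The function $v_m(a)$ has period $2\pi/(2^p\ell)$. Write $\pi/m=\pi/(2^pq)$. The difference $\pi/2^p-\pi/m=\pi(q-1)/(2^pq)$ is a multiple of $2\pi/(2^pq)$, because $q-1$ is even. It is therefore a period of $v_m$, so the two shifts agree. Differentiating $\gamma_m(a)=\gamma_m(-a)$ at $a=0$ gives $\dot\gamma_m(0)=0$.

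\textbf{Item (1).} Consider the closed subspace $Y_j$ of $C^{1,\beta}_{even}(\mathbb T^1)$ consisting of $2\pi/j$-periodic functions. By the same change of variables, $H_\alpha$ maps $Y_j$ into $Y_j$. Taking $j=m$, run the implicit function theorem of Step 1 inside $Y_m$; this is legitimate because $\cos(m\cdot)\in Y_m$ and $L_{R_m}$ preserves $Y_m$. This yields a branch in $Y_m$, and uniqueness in the full space forces it to coincide with $w_m(a)$. Hence $w_m(a)$ is $2\pi/m$-periodic.

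Let $T$ be the prime period of $w_m(a)$ for $a\neq0$; it is $2\pi/N$ with $N$ divisible by $m$. The coefficient of $\cos(m\cdot)$ in $w_m(a)$ is $a\neq0$, and a cosine with a nonzero coefficient can appear only if its frequency is a multiple of $N$. So $N$ divides $m$, giving $N=m$. I suspect this is where the exact form stated in the proposition, with $2^p\ell$ and $\ell\mid q$, comes from. The expected prime period is then $2\pi/m$, and I would simply verify that it fits the stated form with $\ell=q$. The Fourier expansion of $v_m(a)$ follows at once: only frequencies that are multiples of $2^p\ell$ appear, with the frequency $m$ excluded by orthogonality.

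\textbf{Main obstacle.} The delicate point is the uniqueness argument. One must ensure that uniqueness in the implicit function theorem is applied in the correct parametrization $(a,\gamma,v)$, not merely as uniqueness of zeros in $\mathcal U$. Two different $(a,v)$ could in principle give the same $w$, so the decomposition must be fixed: $a$ is determined by projecting $w-\gamma R_m$ onto $\cos(m\cdot)$. With that convention, the translated solution's data $(-a,\gamma_m(a),-\tau v_m(a))$ lie in the implicit function theorem neighbourhood and must equal $(-a,\gamma_m(-a),v_m(-a))$.
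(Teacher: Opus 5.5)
Your proof is correct, but it gets the periodicity by a different route from the paper.

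\textbf{The paper's route.} The paper never leaves the even class and only uses translations already known to preserve evenness. It shifts by $\pi$. While the shift $\pi/2^j$ has $j<p$, the shift leaves $\cos(m\cdot)$ unchanged, so uniqueness forces $w_m(a)$ to be $\pi/2^j$-periodic. That periodicity makes the next half-shift even-preserving. At $j=p$ the shift $\pi/2^p$ flips the sign of $\cos(m\cdot)$, and uniqueness gives (2). Item (1) is then read off from orthogonality. So the paper only obtains $2\pi/2^p$-periodicity and leaves $\ell\mid q$ undetermined.

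\textbf{Your route.} You rerun the implicit function theorem in the invariant closed subspace $Y_m$ of $2\pi/m$-periodic even functions. This gives $2\pi/m$-periodicity at once, and hence the sharper statement $\ell=q$: the prime period is exactly $2\pi/m$ for $a\neq 0$. A single shift by $\pi/m$ then gives (2). Your conversion to the shift $\pi/2^p$, via $\pi/2^p-\pi/m=\frac{q-1}{2}\cdot\frac{2\pi}{m}$, is right.

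\textbf{Two points to make explicit.}
First, invariance of $Y_m$ under $L_{R_m}$ is not by itself enough for the restricted differential to be an isomorphism. Surjectivity onto the $2\pi/m$-periodic part of $C^{\beta-\alpha}_{even}(\mathbb{T}^1)$ holds because $L_{R_m}$ is diagonal in the cosines, $\mu_{km}(R_m)\neq 0$ for $k\neq 1$, and the $\gamma$-direction supplies the frequency $m$.
Second, the subspace branch is identified with $w_m(a)$ only for $|a|$ small. As written, you therefore prove (1) and (2) on a possibly smaller interval than $(-\nu_m,\nu_m)$. The paper's translation argument works on the whole interval, given a translation-invariant uniqueness neighbourhood. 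This is harmless, since $\nu_m$ is only existential and later only $\dot\gamma_m(0)=0$ is used.

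Your sharper claim can also be seen from the homogeneity of $K$: by uniqueness, $w_m(a)(s)=\frac{1}{m}w_1(ma)(ms)$ for $|a|$ small.
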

\begin{proof}
We prove \textit{(2)} first. For every $m\geq 1$ and every $a\in (-\nu_m, \nu_m)$, since $w_m(a)$ is $2\pi$-periodic and even, $w_m(a)(\cdot + \pi)$ is also $2\pi$-periodic, even, and solves $\Phi(a,\gamma_m(a),w_m(a)(\cdot + \pi))=0$ for $R=R_m$. Moreover,
\begin{align}
    w_m(a)(s+\pi) &= \gamma_m(a)R_m + a (\cos(m(s+\pi))+v_m(a)(s+\pi))\\ & = \gamma_m(a)R_m + a((-1)^{m}\cos(mx)+v_m(a)(s+\pi)).
\end{align}
Hence, by the local uniqueness of solutions, we deduce that
\begin{align}
    w_m(a)(s+\pi)&=\gamma_m(a)R_m + a((-1)^{m}\cos(ms)+v_m(a)(s+\pi)) \\
    &=\gamma_m(a')R_m + a'(\cos(ms)+v_m(a')(s)) = w_m(a')(s)
\end{align}
for some $a'\in (-\nu_m,\nu_m)$. By orthogonality, we conclude that $a'=(-1)^m a$. We must now consider two cases.

If $m$ is odd, we deduce that $a'=-a$. Therefore, we find that $\gamma_m(a)=\gamma_m(- a)$ and that $v_m(a)(\cdot+\pi)=- v_m(- a)(\cdot)$, which completes the proof of \textit{(2)}.
    
If $m$ is even, that is, if $m=2m'$ for some integer $m'\geq 1$, we find that $a'=a$ and, therefore, that $w_m(a)(\cdot +\pi) = w_m(a)(\cdot)$; in other words, $w_m(a)$ is $\pi$-periodic. In this case, we repeat the argument with $w_m(a)(\cdot+\pi/2)$. 

Since $w_m(a)$ is $\pi$-periodic, even, and a solution of $\Phi(a, \gamma_m(a), w_m(a))=0$ for $R=R_m$, then $w_m(a)(\cdot + \pi/2)$ is so too and thus, by uniqueness,
\begin{align}
    w_m(a)\left(s+ \frac{\pi}{2}\right) &= \gamma_m(a)R_m + a\left((-1)^{m'} \cos(m s)+v_m(a)\left(s+\frac{\pi}{2}\right)\right) \\ &= \gamma_m(a')R_m + a'(\cos(ms)+v_m(a')(s)) = w_m(a')(s).
\end{align}
Again, by orthogonality, we see that $a'=(-1)^{m'} a$.

As before, if $m'=m/2$ is odd, we deduce that $a'=-a$. Therefore, we find that $\gamma_m(a)=\gamma_m(-a)$ and that $v_m(a)(\cdot + \pi/2)=-v_m(-a)(\cdot)$, which completes the proof of \textit{(2)}. If, on the contrary, $m'=m/2$ is even, we find that $w_m(a)(\cdot + \pi/2)=w_m(a)(\cdot)$.

The claim for arbitrary $m=2^p q$ follows easily by induction on $p$. Notice that from this we deduce, in particular, that $w_m(a)$ is $2\pi/2^p$-periodic.

To show \textit{(1)}, let $T$ be the prime period of $w_m(a)$. We must have
\begin{align}
    \cos(m s) + v_m(a)(s) &= \cos(m (s+T)) + v_m(a)(s+T) \\ 
    &= \begin{multlined}[0.65\displaywidth][t]
    \cos(ms)\cos(mT)-\sin(ms)\sin(mT) \\ + \sum_{k\neq m} c_k(a) \left\{\cos(ks)\cos(kT)-\sin(ks)\sin(kT)\right\}.
    \end{multlined}
\end{align}
By orthogonality with $\cos(m\cdot)$, we deduce that $\cos(mT)=1$, and thus $T=2\pi r/m=2\pi r/(2^p q)$ for some integer $r\geq 1$. Since the prime period must divide $2\pi/2^p$, we find that $(2\pi /2^p)/T=q/r$ must be an integer, say, $\ell$. Similarly, by orthogonality with $\cos(k\cdot)$, for every $k\geq 1$ we deduce that, if $c_k(a)\neq 0$, then $\cos(kT)=1$, and thus $k=(2^p \ell) k'$ for some integer $k'\geq 1$.
\end{proof}

\section{On the stability of periodic constant-NMC surfaces of revolution}\label{section:on_the_stab_of_periodic_NMC_surf_of_rev}
In \cite{CCM1}, the authors proved that volume-constrained minimizers of the periodic nonlocal perimeter $\mathcal{P}_{\alpha}$ have constant NMC. In other words, the constant-NMC equation arises as the Euler-Lagrange equation for the periodic nonlocal perimeter functional, $\mathcal{P}_{\alpha}$. It is therefore natural to ask whether a given constant-NMC surface is a volume-constrained minimizer of $\mathcal{P}_{\alpha}$, something of great difficulty in general. An easier task is to study their stability (note that unstable configurations cannot be minimizers). Most of the ideas presented in this section are due to Cabré, Csató, and Mas, who introduced them in \cite{CabreCsatoMas_stability} for their study of the stability of straight cylinders as constant-NMC surfaces of revolution in $\mathbb{R}^n$.

Even though the goal of this paper is to study the stability of near-cylinders in dimension $n=2$, since the ideas and definitions in this section are the same in all dimensions, we state them for all $n\geq 2$.

To define stability of a constant-NMC set, we need to introduce the class variations that we will consider admissible.
\begin{definition}
Let $E\subset \mathbb{R}^n$ be an open set that is $2\pi$-periodic in the $x_1$-direction. We say that a collection of open sets $\{E_t\}_{t\in (- \varepsilon, \varepsilon)}$ is a \textit{volume-preserving periodic variation} of $E$ if there exists a family of diffeomorphisms $\Phi_t : \mathbb{R}^n \to \mathbb{R}^n$ parametrized by $t\in (- \varepsilon, \varepsilon)$ such that $E_t = \Phi_t(E)$, and the following properties hold:
\begin{itemize}
    \item $\Phi_0 = \mathrm{id}_{\mathbb{R}^n}$.
    \item $\Phi \in C^2\left((- \varepsilon, \varepsilon)\times \mathbb{R}^n\right)$,
    \item $\Phi_t$ is volume preserving: for every $t\in (-a,a)$, one has that $\abs{E_t \cap \Omega} = \abs{E\cap \Omega}$, where $\abs{\cdot}$ is the Lebesgue measure, and $\Omega = \{(x_1,x')\in \mathbb{R}\times \mathbb{R}^{n-1} : -\pi < x_1 < \pi\}$,
    \item $\Phi_t$ preserves the periodicity of $E$: for every $t\in (-\varepsilon,\varepsilon)$ and every $(x_1,x')\in \mathbb{R}\times \mathbb{R}^{n-1}$, one has that $\Phi_t( x_1+2\pi, x') = \Phi_t(x_1,x')+2\pi e_1$.
\end{itemize}
If, in addition, $\Phi_t$ is of the form $\Phi_t(x_1,x') = (x_1, \eta_t(x_1)x')$ for some function $\eta \in C^2\left((-\varepsilon,\varepsilon)\times \mathbb{R}\right)$ such that $\eta_t(\cdot)$ is $2\pi$-periodic and even for every $t\in (-\varepsilon,\varepsilon)$, and $\eta_0(x_1)=1$, we say that the variation is \textit{cylindrically symmetric} and \textit{even}.
\end{definition}

In \cite{CCM1}, the authors proved that every volume-constrained minimizer of $\mathcal{P}_{\alpha}$ must be cylindrically symmetric and even---i.e., a Delaunay surface. Therefore, in the definition of stability for even, cylindrically symmetric constant-NMC sets, which is given next, we shall only consider even, cylindrically symmetric variations. 
\begin{definition}
Let $\alpha\in (0,1)$, and let $E\subset \mathbb{R}^n$ be an open, $2\pi$-periodic, even, cylindrically symmetric open set. We say that $E$ is a volume-constrained critical point of $\mathcal{P}_\alpha$ if
\begin{equation}
    \derivative{}{t} \mathcal{P}_\alpha[E_t]\Big\vert_{t=0} = 0
\end{equation}
for all even, cylindrically symmetric volume-preserving periodic variations $E_t$ of $E$. We say that $E$ is \textit{stable} for $\mathcal{P}_{\alpha}$ if, moreover,
\begin{equation}
    \derivative[2]{}{t} \mathcal{P}_\alpha[E_t]\Big\vert_{t=0} \geq 0
\end{equation}
for all even, cylindrically symmetric volume-preserving periodic variations $E_t$ of $E$.\footnote{Often, in the literature, critical points of a given functional are called \textit{stable} if the second variation of the functional at said point is strictly positive, and they are called \textit{semi-stable} if it is nonnegative.}
\end{definition}
In the following, whenever there is no ambiguity, we will refer to volume-constrained critical points of $\mathcal{P}_\alpha$ simply as ``critical points'', and to even, cylindrically symmetric volume-preserving periodic variations simply as ``variations'' to abbreviate.

\begin{remark}\label{remark_constant NMC_critical_point}
Every $2\pi$-periodic open set $E\subset \mathbb{R}^n$ such that $H_\alpha[E]$ is constant on $\partial E$ is a volume-constrained critical point of $\mathcal{P}_\alpha$, and vice versa (provided that $E$ is appropriately smooth). When $E$ is cylindrically symmetric, this follows from \eqref{eq:first_var_Palpha} and \eqref{volume_preserving_condition_vt} below for cylindrically symmetric variations. However, the first variation of $\mathcal{P}_{\alpha}$ at $E$ vanishes for \textit{all} volume-preserving periodic variations (not necessarily even or cylindrically symmetric). This fact follows from the formula
\begin{equation}
    \derivative{t}\mathcal{P}_{\alpha}[E_t] = \int_{\partial E_t \cap \Omega} \xi_t(x) H_{\alpha}[E_t](x)  \dd{\mathcal{H}^{n-1}}(x), \quad \text{with }\xi_t:= \derivative{\Phi_t}{t}\cdot \nu_{E_t},
\end{equation}
which can be obtained by adapting the first variation formula in \cite[Theorem 6.1]{FigalliFuscoEtAl_2015}; see \cite[Section 2]{CabreCsatoMas_stability} for more details.
\end{remark}

\subsection{A Rayleigh quotient criterion for stability}
In this subsection we show that the stability of Delaunay surfaces is determined by the quadratic form associated to the linearization of the NMC operator.

Throughout the rest of the section, we denote by $E\subset \mathbb{R}^n$ an even, cylindrically symmetric measurable set of the form
\begin{equation}
    E=\{(x_1,x')\in \mathbb{R}^n : \abs{x'}<u(x_1)\},
\end{equation}
with $u:\mathbb{R}\to \mathbb{R}$ $2\pi$-periodic, even, positive, and such that $E$ has constant NMC. We consider even, cylindrically symmetric variations of $E$, namely, $E_t=\{(x_1,x')\in \mathbb{R}^n : \abs{x'}<u_t(x_1)\}$. From \cite[Part 3 in the proof of Theorem 1.1]{CCM1} we have that
\begin{equation}\label{eq:first_var_Palpha}
    \derivative{}{t}\mathcal{P}_\alpha[E_t] = \abs{\mathbb{S}^{n-2}}\int_{-\pi}^{\pi} v_t(x_1)u_t(x_1)^{n-2}H_\alpha(u_t)\dd{x_1}
\end{equation}
and, denoting $V(t)$ the volume of $E_t$ in $\Omega$, since the variation is volume preserving,
\begin{equation}\label{volume_preserving_condition_vt}
    0=\derivative{}{t}V(t) = (n-1)\abs{B_1^{n-1}}\int_{-\pi}^\pi  v_t(x_1)u_t^{n-2}(x_1)\dd{x_1},
\end{equation}
with $v_t := \derivative{}{t}u_t$. Therefore,
\begin{align}
      \derivative[2]{}{t}\Big\vert_{t=0} \mathcal{P}_\alpha[E_t] &= \derivative{}{t}\Big\vert_{t=0}\abs{\mathbb{S}^{n-2}}\int_{-\pi}^{\pi}  v_t(x_1)u_t(x_1)^{n-2}H_\alpha(u_t)\dd{x_1} \\ 
      &= 
      \begin{multlined}[t][0.7\displaywidth]
      \abs{\mathbb{S}^{n-2}}\int_{-\pi}^{\pi}v_0(x_1)u(x_1)^{n-2} \derivative{}{t}\Big\vert_{t=0} H_\alpha(u_t) \dd{x_1}  \\ 
      + \abs{\mathbb{S}^{n-2}}H_\alpha(u)\int_{-\pi}^{\pi}  \derivative{}{t}\Big\vert_{t=0} \left(v_t(x_1)u_t(x_1)^{n-2}\right)\dd{x_1}.
      \end{multlined}
\end{align}
The second term in the last expression is precisely $V''(0)$, which, since the variation is volume-preserving, is equal to $0$. Hence, from this and \eqref{volume_preserving_condition_vt}, we deduce that
\begin{equation}\label{eq:p''(0)}
    \derivative[2]{}{t}\Big\vert_{t=0} \mathcal{P}_\alpha[E_t] = \int_{-\pi}^{\pi} v_0(x_1)\left(D_u H_\alpha(u)v_0(x_1)\right) \dd{S_u (x_1)}
\end{equation}
and
\begin{equation}\label{eq:vol_pres}
    \int_{-\pi}^\pi  v_0(x_1)\dd{S_u}(x_1) = 0,
\end{equation}
where
\begin{equation}
    \dd{S_u(x_1)} := \abs{\mathbb{S}^{n-2}} u(x_1)^{n-2} \dd{x_1}
\end{equation}
is the surface measure on $\partial E$. Notice that when $n=2$, or when $u$ is constant, $S_u$ is proportional to the $(n-1)$-dimensional Hausdorff measure.

This derivation leads to the following lemma, which was first proved in \cite[Lemma 2.1]{CabreCsatoMas_stability} in the case that $E$ is a straight cylinder. It gives a characterization of stability for Delaunay surfaces in terms of a ``Rayleigh quotient'' associated to the linearization of the NMC operator. 

We note that the quantity $\mathcal{R}$ in \eqref{def:rayleigh_quot} in general is \emph{not} an eigenvalue of $L_u$, even in dimension $n=2$. As we will see in Section \ref{sec:stability_of_near_cyls}, this is because the subspace of zero-mean functions (with respect to the measure $\dd{S_u}$) may not be invariant under $L_u$. Instead, it is an eigenvalue of the composition of $L_u$ with the projection onto this subspace of zero-mean functions---this composition is usually called the \textit{compression} of the operator $L_u$.

\begin{lemma}\label{lemma_cylindrical}
Let $\alpha \in (0,1)$, and let
\begin{equation}
  E = \{(x_1,x')\in \mathbb{R}^{n+1} : \abs{x'}<u(z_1)\},
\end{equation}
with $u: \mathbb{R}\to \mathbb{R}$ a $2\pi$-periodic, even, positive function of class $C^{1, \beta}$ for some $\beta> \alpha$, be such that $E$ has constant nonlocal mean curvature. 

Then, $E$ is stable for $\mathcal{P}_{\alpha}$ with respect to even, cylindrically symmetric volume-preserving periodic variations if, and only if, 
\begin{equation}\label{def:rayleigh_quot}
     \mathcal{R}_u:=\inf_{v}\frac{\int_{-\pi}^\pi vL_uv \dd{S_u}}{\int_{-\pi}^\pi v^2 \dd{S_u}}\geq 0,
\end{equation}
where the infimum is taken over all functions $v\in C^1_{even}(\mathbb{T}^1)$ such that
\begin{equation}
    \int_{-\pi}^\pi v\dd{S}_u=0,
\end{equation}
 $L_u = D_u H_{\alpha}(u)$ is the linearization of the nonlocal mean curvature operator at $E$, and 
\begin{equation}
    \dd{S_u}(x_1) = \abs{ \mathbb{S}^{n-2}}u(x_1)^{n-2}\dd{x_1}
\end{equation}
is the surface measure on $\partial E$.
\end{lemma}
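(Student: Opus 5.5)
The proof has two directions, and both rest on the second variation identities \eqref{eq:p''(0)} and \eqref{eq:vol_pres} derived just above the statement. For any even, cylindrically symmetric volume-preserving periodic variation $E_t=\{\abs{x'}<u_t(x_1)\}$ with $u_0=u$ and $v_0=\partial_t u_t|_{t=0}$, these give
\begin{equation}
\frac{d^2}{dt^2}\Big\vert_{t=0}\mathcal{P}_\alpha[E_t]=\int_{-\pi}^{\pi} v_0\,L_u v_0\,\dd{S_u}
\quad\text{and}\quad \int_{-\pi}^\pi v_0\,\dd{S_u}=0.
\end{equation}
Here I use that $E$ has constant NMC, so the term $H_\alpha(u)V''(0)$ drops out. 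In both directions I treat the factor $2$ between $D_uH_\alpha$ and $L_u$ as irrelevant, since only signs matter.

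\textbf{Stability implies $\mathcal{R}_u\ge 0$.} Fix $v\in C^1_{even}(\mathbb{T}^1)$ with $\int v\,\dd{S_u}=0$. I will build a variation whose velocity is exactly $v$. Set
\begin{equation}
u_t=u+tv+\lambda(t)\,w,
\end{equation}
where $w$ is a fixed smooth, even, periodic function with $\int w\,\dd{S_u}\neq 0$; taking $w=u$ works, or $w=1$ when $n=2$. The correction $\lambda(t)$ is fixed by requiring $V(u_t)=V(u)$. Since $V$ is a smooth function of $(t,\lambda)$ and $\partial_\lambda V\ne0$ at $(0,0)$, the implicit function theorem gives a $C^2$ function $\lambda(t)$. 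Moreover $\lambda'(0)=0$, because $\partial_t V=\int v\,\dd{S_u}\cdot\mathrm{const}=0$. Hence $v_0=v$.

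It remains to realize $u_t$ as a family of diffeomorphisms $\Phi_t(x_1,x')=(x_1,\eta_t(x_1)x')$. For this I take $\eta_t=u_t/u$, which is well defined because $u>0$ is continuous and periodic, hence bounded below.

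\textbf{The main obstacle: regularity in this construction.} The definition requires $\eta\in C^2$ in $(t,x_1)$, but $u$ and $v$ are only $C^1$ (and $u\in C^{1,\beta}$). I would handle this in one of two ways:
\begin{enumerate}
\item check that the second variation computation leading to \eqref{eq:p''(0)} only uses $C^1$ regularity in $x_1$ together with $C^2$ regularity in $t$, which holds here since $u_t$ is affine in $t$ up to the $C^2$ scalar $\lambda$; or
\item approximate $v$ by smooth zero-mean even functions $v_k\to v$ in $C^1$, and pass to the limit in the quadratic form.
\end{enumerate}
The limit in the second option is justified because $L_u$ is bounded from $C^{1,\beta}$ to $C^{\beta-\alpha}$ by Lemma~\ref{lemma_H_Cinfty}. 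More robustly, the bilinear form $\int vL_uv$ is continuous in $H^{(1+\alpha)/2}$, which is controlled by $C^1$; this follows from the kernel representation in Lemma~\ref{NLMCLemma}.

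With such a variation in hand, stability gives $\int vL_uv\,\dd{S_u}\ge0$. Since $v$ was an arbitrary admissible function, $\mathcal{R}_u\ge0$.

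\textbf{$\mathcal{R}_u\ge 0$ implies stability.} Take any admissible variation, so $u_t=\eta_t u$ is $C^2$ in $t$ and $v_0=\partial_t\eta_t|_{t=0}\,u$. This $v_0$ is even, periodic and $C^1$, and it satisfies \eqref{eq:vol_pres}. It is therefore an admissible competitor in \eqref{def:rayleigh_quot}. If $v_0\not\equiv 0$, the hypothesis $\mathcal{R}_u\ge0$ gives $\int v_0L_uv_0\,\dd{S_u}\ge0$; if $v_0\equiv0$, this integral is zero. In either case \eqref{eq:p''(0)} shows that the second derivative of $\mathcal{P}_\alpha[E_t]$ at $t=0$ is nonnegative, which is stability.
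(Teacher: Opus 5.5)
Your proposal is correct and is essentially the paper's proof. You use the same perturbation $u+tv+\lambda(t)w$, with the volume correction given by the implicit function theorem and $\lambda'(0)=0$ because $\int v\,dS_u=0$; the converse direction is read off directly from the second-variation identity and the volume constraint derived before the lemma.

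The paper does not discuss realizing $u_t$ through $C^2$ maps $\Phi_t$ at all, so your concern goes beyond it. If you want to close that point, note that mollifying $v$ alone does not make $\eta_t=u_t/u$ of class $C^2$, since $u$ is only $C^{1,\beta}$. With $w=u$ you should instead approximate $v/u$ in $C^1$ by smooth even $\zeta_k$ with $\int \zeta_k u\,dS_u=0$. This gives the smooth $\eta_t=1+t\zeta_k+\lambda(t)$, and you then pass to the limit in the quadratic form.
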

\begin{proof}
    The only thing left to prove is that for every $v\in C^1_{even}(\mathbb{T}^1)$ there exists an even, cylindrically symmetric volume-preserving periodic variation $u_t$ of $E$ such that $\derivative{t} \vert_{t=0} u_t  = v$. For this, first we choose any $2\pi$-periodic even function $g$ such that $\int_{-\pi}^\pi g(x_1)\dd{S}(x_1) \neq 0$. 

    For $t,t'\in \mathbb{R}$, we set
    \begin{align}
    u_{t,t'}(x_1) &:= u(x_1) + tv(x_1) + t'g(x_1),\\ 
    E_{t,t'} &:= \{(x_1,x')\in \mathbb{R}^n : \abs{x'}< u_{t,t'}(x_1)\},\\
    V(t,t') &:= \abs{E_{t,t'}\cap \Omega},
    \end{align}
    with $\Omega = \{(x_1,x')\in \mathbb{R}^n  : -\pi < x_1 < \pi\}$ as usual. Notice that
    \begin{equation}
        \dv{}{t'}\Big\vert_{(t,t')=(0,0)}V(t,t') = \int_{-\pi}^\pi g(x_1)\dd{S}_{u}(x_1) \neq 0.
    \end{equation}
    By the implicit function theorem, there exists $\varepsilon >0$ and a function $\varphi \in C^1(-\varepsilon, \varepsilon)$ satisfying $\varphi(0)=0$ and $V(t,\varphi(t))=\abs{E\cap \Omega}$ for every $t\in (-\varepsilon, \varepsilon)$. On the other hand, since $\int_{-\pi}^\pi v\dd{S}_{u}=0$ and $\int_{-\pi}^\pi g\dd{S}_u \neq 0$, we have that
    \begin{equation}
        0=\dv{}{t}\Big\vert_{t=0}V(t,\varphi(t)) = \int_{-\pi}^\pi (v(x_1)+\varphi'(0)g(x_1)) \dd{S}_{u}(x_1) = \varphi'(0)\int_{-\pi}^\pi g(x_1)\dd{S}_{u}(x_1),
    \end{equation}
    hence we must have $\varphi'(0)=0$. Therefore, setting $u_t := u_{t,\varphi(t)}$, we have
    \begin{equation}
        \dv{t}\Big\vert_{t=0}u_t = \dv{}{t}\Big\vert_{t=0} u_{t,\varphi(t)} = v,
    \end{equation}
    as we wanted to see.
\end{proof}

\subsection{The case of straight cylinders in dimension 2}\label{subsec:the_case_of_straight_cylinders} When $n=2$ and $u$ is constant---that is, when $E\subset \mathbb{R}^2$ is a straight cylinder---all the expressions appearing above become particularly simple. Indeed, if $u\equiv R$, by Lemma \ref{lemma:diagonalization_L}, the operator $L_R := \frac{1}{2}D_uH_{\alpha}(R)$ diagonalizes in the basis $(e_k:=\cos(k\cdot))$ of $H^{(1+\alpha)/2}_{even}(\mathbb{T}^1)$, with corresponding eigenvalues $\mu_k(R)$ given by \eqref{eq::expression_eigenvalues}. 

Given $v\in C^{1}_{even}(\mathbb{T}^1)$, we note that 
\begin{equation}
    \int_{-\pi}^\pi v\dd{S}_R = \int_{-\pi}^\pi v(s)\dd{s} = 0
\end{equation}
if, and only if, $v$ is orthogonal to $e_0\equiv 1$ in $L^2_{even}(\mathbb{T}^1)$. Therefore, by Lemma \ref{lemma_cylindrical}, the stability of the straight cylinder of radius $R>0$ is determined by the sign of the second eigenvalue $\mu_1$ of $L_R$, as we state next. This result had already been found in \cite{CabreCsatoMas_stability}, but we include its proof here as an easy previous step to our study of the stability of the non-straight near-cylinders.
\begin{proposition}[\cite{CabreCsatoMas_stability}]\label{proposition:stab_straight_cylinders}
    Let $\alpha\in (0,1)$, $R>0$, and $L_R = \frac{1}{2}D_uH_{\alpha}(R)$. 

    Then, in the notation of Lemma \ref{lemma_cylindrical} with $u\equiv R$,
    \begin{equation}
        \mathcal{R}_R:=\inf_v\frac{\expval{v, L_R v}}{\expval{v,v}} = \mu_1(R),
    \end{equation}
    where the infimum is taken over all $v\in H^{(1+\alpha)/2}_{even}(\mathbb{T}^1)$ such that $\int_{-\pi}^\pi v(s)\dd{s} = 0$, and $\mu_1(R)$ is given by \eqref{eq::expression_eigenvalues}.

    As a consequence, the straight cylinder of radius $R$ is stable for $\mathcal{P}_{\alpha}$ if, and only if, $R\geq R_1$, where $R_1$ is defined as the unique solution of $\mu_1(R)=0$ (as in Lemma \ref{lemma:diagonalization_L}).
\end{proposition}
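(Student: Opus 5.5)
The plan is to diagonalize the quadratic form in the cosine basis and then apply Lemma \ref{lemma_cylindrical}. Take $v\in H^{(1+\alpha)/2}_{even}(\mathbb{T}^1)$ with $\int_{-\pi}^\pi v\,\dd{s}=0$ and expand it as $v=\sum_{k\geq 1} c_k e_k$. The $k=0$ coefficient vanishes precisely because of the zero-mean constraint. By Lemma \ref{lemma:diagonalization_L}, $L_R e_k=\mu_k(R)e_k$, and the $e_k$ are mutually orthogonal in $L^2_{even}(\mathbb{T}^1)$ with $\expval{e_k,e_k}=\pi$ for $k\geq1$. Hence
\begin{equation}
\expval{v,L_Rv}=\pi\sum_{k\geq1}\mu_k(R)c_k^2,\qquad \expval{v,v}=\pi\sum_{k\geq 1}c_k^2.
\end{equation}
The first identity should be understood as the quadratic form of $L_R$. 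It is finite on $H^{(1+\alpha)/2}$ because $\mu_k(R)\sim c\,k^{1+\alpha}$ by item \textit{(2)} of Lemma \ref{lemma:diagonalization_L}, so the form is comparable to the $H^{(1+\alpha)/2}$ seminorm plus a bounded term.

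Since the sequence $\mu_k(R)$ is increasing in $k$ (item \textit{(1)} of Lemma \ref{lemma:diagonalization_L}), we get $\sum_{k\ge1}\mu_k c_k^2\geq \mu_1\sum_{k\geq 1} c_k^2$. The quotient is therefore bounded below by $\mu_1(R)$, and this bound is attained at $v=e_1$, which has zero mean. This gives $\mathcal{R}_R=\mu_1(R)$.

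It remains to connect this with stability. When $n=2$ and $u\equiv R$, the measure $\dd{S_R}$ is simply $\dd{s}$, so the constraint in Lemma \ref{lemma_cylindrical} is exactly zero mean. The operator there is $D_uH_\alpha(R)=2L_R$, which only rescales the quotient by a positive factor. Lemma \ref{lemma_cylindrical} takes the infimum over $C^1_{even}$, not over $H^{(1+\alpha)/2}_{even}$, but this causes no difficulty. The minimizer $e_1$ is smooth, so both infima equal $\mu_1(R)$: the $C^1$ infimum is at least the $H^{(1+\alpha)/2}$ infimum and is at most the value at $e_1$.

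Consequently, the cylinder is stable if and only if $\mu_1(R)\geq 0$. By item \textit{(3)} of Lemma \ref{lemma:diagonalization_L}, $\mu_1$ is increasing in $R$ and vanishes only at $R_1$, so this condition is equivalent to $R\geq R_1$.

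The only genuinely delicate point is justifying the spectral expansion of the quadratic form, namely that $\expval{v,L_Rv}$ makes sense and equals $\pi\sum\mu_k c_k^2$ for $v$ in the form domain. I would first establish this for trigonometric polynomials, where it is immediate from the diagonalization. I would then pass to general $v$ by density, using the two-sided comparison $\mu_k\asymp k^{1+\alpha}$ for large $k$.
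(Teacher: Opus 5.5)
Your proposal is correct and follows the same route as the paper. The zero-mean constraint is exactly orthogonality to $e_0\equiv 1$, so diagonalizing $L_R$ in the cosine basis and using that $\mu_k(R)$ increases in $k$ gives $\mathcal{R}_R=\mu_1(R)$; stability then reduces, via Lemma \ref{lemma_cylindrical} and item \textit{(3)} of Lemma \ref{lemma:diagonalization_L}, to $\mu_1(R)\geq 0\iff R\geq R_1$.

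Your extra remarks make explicit details the paper leaves implicit when it calls the result an immediate consequence of the preceding observations. These are the positive factor between $D_uH_\alpha(R)$ and $L_R$, the fact that $\dd{S_R}$ is a constant multiple of $\dd{s}$ (it is $2\,\dd{s}$, which is harmless), the comparison of the $C^1_{even}$ and $H^{(1+\alpha)/2}_{even}$ infima through the smooth minimizer $e_1$, and the density argument for the quadratic form.
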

\begin{proof}
    The first part of the proposition is an immediate consequence of the previous observations. The last claim follows from \textit{(3)} in Lemma \ref{lemma:diagonalization_L}.
\end{proof}

\section{On the eigenvalues of the linearized NMC operator}\label{section:on_the_eigenvalues}
Let us now turn our attention to the non-straight near-cylinders of Theorem \ref{theorem_seq_bif}, 
\begin{equation}
    E_m(a) = \{(z_1,z_2)\in \mathbb{R}^2 : \abs{z_2}<w_m(a)(z_1)\},
\end{equation}
with $m\geq 1$, $a\in (-\nu_m, \nu_m)$, and $w_m(a)$ given by
\begin{equation}
    w_m(a)(s) = \gamma_m(a)R_m + a(\cos(ms)+v_m(a)(s)).
\end{equation}

In Subsection \ref{subsec:the_case_of_straight_cylinders}, we saw that the stability of the straight cylinder of radius $R>0$ as a constant-NMC surface is determined by the sign of the second eigenvalue $\mu_1(R)$ of the linearization of the NMC operator, $L_R=\frac{1}{2}D_{u}H_{\alpha}(R)$. This simple characterization of stability is possible thanks to the fact that the subspace of zero-mean functions is invariant under the operator $L_R$, and, therefore, the ``Rayleigh quotient'' $\mathcal{R}_R$ appearing in Lemma \ref{lemma_cylindrical} is an eigenvalue of $L_R$. As we anticipated in the previous section, this is not true for general constant-NMC surfaces of revolution. However, since the sets $E_m(a)$ converge to straight cylinders as $a\to 0$, we will see in the next section that the subspace of zero-mean functions is, in a way, ``\emph{almost} invariant'' under the linearized operator $L_m(a):=\frac{1}{2}D_uH_{\alpha}(w_m(a))$ for $a$ close to 0. 

Therefore, to determine the stability of the near-cylinders $E_m(a)$, it will be necessary to calculate the sign of the second eigenvalue $\mu_1(a;R_m)$ of $L_m(a)$ for $a$ close to $0$. When $m\geq 2$, since $R_m<R_1$, we know, by Proposition \ref{proposition:stab_straight_cylinders}, that the straight cylinder of radius $R_m$ is unstable. As a consequence, it will be easy to see that the bifurcated near-cylinders $E_m(a)$ are all unstable for $m\geq 2$ and $a$ close to 0. However, when $m=1$, since the straight cylinder of radius $R_1$ is semi-stable (since $\mu_1(R_1)=\mu_1(a=0;R_1)=0$), we will need to calculate an asymptotic expansion for the second eigenvalue $\mu_1(a;R_1)$ of $L_1(a)$ for $a$ close to 0.

For this, let us introduce some notation. We recall that, by Lemma \ref{lemma:diagonalization_L}, when $a=0$, the operator $L_1(0) = \frac{1}{2}D_u H_{\alpha}(R_1)$ diagonalizes with eigenfunctions $e_k=\cos(k\cdot)$ and eigenvalues $\mu_k$. For every $a\in (-\nu_1, \nu_1)$, we denote by $e_k(a)$ and $\mu_k(a)$ the eigenfunctions and eigenvalues of $L_1(a)$. In order to simplify the notation, we have omitted the dependence of the eigenvalues on the radius, since in this section we will only be dealing with the first bifurcaion branch, corresponding to $R=R_1$. In other words, we denote by
\begin{equation}
  \mu_k(a) := \mu_k(a; R_1)  
\end{equation}
the $k$-th eigenvalue of the operator 
\begin{equation}
    L_1(a) := \frac{1}{2}D_u H_{\alpha}(w_1(a)).
\end{equation}
Every $\mu_k(a)$ represents the continuation of the eigenvalues $\mu_k(R_1)=\mu_k(a=0; R_1)$ of $L_{R_1}$ (which, we recall, are given by \eqref{eq::expression_eigenvalues} for $R=R_1$) along the bifurcation branch.

We will prove in Subsection \ref{subsec:continuation_eigenvalues} that the $e_k(a)$ and the $\mu_k(a)$ do indeed exist, and that they are smooth functions of $a$. As we mentioned above, the goal of this section is to obtain an asymptotic expansion of $\mu_1(a)$ for $a$ close to 0. We will see later that $\mu_1(0)=\dot{\mu}_1(0)=0$. Hence, to obtain the desired expansion, we will need to compute $\ddot{\mu}_1(0)$. Here, and throughout the rest of the paper, we have denoted differentiation with respect to $a$ with a dot.

\subsection{A general expression for $\ddot{\mu}_1$ in smooth families of operators}\label{subsec:a_general_expression_for_ddot_mu} First, we derive a general formula for the second derivative of the eigenvalues of a symmetric operator that depends smoothly on some real parameter $a$. This procedure is well-known and widely used in several areas like quantum mechanics, but we include it here for completeness. We will later apply it to the particular case of the linearization of the NMC operator on the near-cylinders.

Let $\{L(a)\}_{a\in \mathbb{R}}$ be a smooth family of symmetric operators acting on some real Hilbert space $(\mathscr{H}, \expval{\cdot,\cdot})$. Assume that there exists an orthogonal basis $(e_l)_{l\geq 0}$ of $\mathscr{H}$ consisting of eigenvectors of $L(0)$ with corresponding simple eigenvalues $(\mu_l)_{l\geq 0}$.

Suppose further that $\mu_1 $ is simple,\footnote{One can derive similar expressions as the ones we will obtain here by requiring only that $\mu_1$ have finite multiplicity. However, since in our setting the operator $L(0)$ will be the linearized NMC operator on the straight cylinders, whose eigenvalues are simple, we can assume that $\mu_1$ is simple to make computations easier.} and that both $\mu_1$ and $e_1$ can be smoothly continued in a neighbourhood of $a=0$, so that the eigenvalue equation
\begin{equation}\label{eigenvalue_eq(a)}
    L(a) e_1(a) = \mu_1(a) e_1(a)
\end{equation}
holds for every $a$ in a neighbourhood of $a=0$. We normalize $e_1(a)$ so that
\begin{equation}
    \expval{e_1(a),e_1(a)} = \expval{e_1(0), e_1(0)}
\end{equation}
for all $a$ in a neighbourhood of $a=0$.

In the following, in order to simplify the notation, all expressions will be evaluated at $a=0$ unless otherwise stated. As before, we denote differentiation with respect to $a$ with a dot.

We differentiate \eqref{eigenvalue_eq(a)} with respect to $a$ to get
\begin{equation}\label{der_eigenval_eq}
    \dot{L} e_1+L \dot{e}_1 =\dot{\mu}_1 e_1+\mu_1\dot{e}_1.
\end{equation}
Taking the scalar product with $e_1$ in \eqref{der_eigenval_eq} we obtain
\begin{equation}
    \expval{e_1,\dot{L} e_1}+\expval{e_1, L\dot{e}_1} =\dot{\mu}_1 \expval{e_1,e_1}+\mu_1\expval{e_1,\dot{e_1}}.
\end{equation}
Therefore, using the symmetry of $L$ and the fact that
\begin{equation}\label{eq:dote_othogonal_e}
 \expval{e_1(a),\dot{e}_1(a)} = \frac{1}{2}\derivative{}{a}\expval{e_1(a), e_1(a)} = \frac{1}{2}\derivative{}{a}\expval{e_1(0), e_1(0)}=0,
\end{equation}
we get
\begin{equation}\label{eq:dot_mu}
    \dot{\mu}_1 = \frac{\expval{e_1,\dot{L} e_1}}{\expval{e_1,e_1}}.
\end{equation}

To find an expression for $\dot{e}_1$, we write
\begin{equation}
    \dot{e}_1 = \sum_{k\neq 1} \frac{\expval{e_k,\dot{e}_1}}{\expval{e_k,e_k}}e_k.
\end{equation}
Taking the scalar product with $e_k$ in \eqref{der_eigenval_eq} for $k\neq 1$ we get
\begin{equation}
    \expval{e_k,\dot{L} e_1}+\expval{e_k, L\dot{e}_1} =\mu_1\expval{e_k,\dot{e}_1},
\end{equation}
which, using the symmetry of $L$, yields
\begin{equation}
    \expval{e_k,\dot{L} e_1}-(\mu_1 - \mu_k) \expval{e_k, \dot{e}_1} =0.
\end{equation}
Hence,
\begin{equation}\label{eq:dot_e1}
    \dot{e}_1 = \sum_{k\neq 1} \frac{1}{\mu_1 - \mu_k}\frac{\expval{e_k, \dot{L} e_1}}{\expval{e_k,e_k}}e_k.
\end{equation}

Now we compute $\ddot{\mu}_{1}$. To do that, we differentiate \eqref{eigenvalue_eq(a)} twice to obtain
\begin{equation}
    \ddot{L}e_1+2\dot{L} \dot{e}_1+L\ddot{e}_1=\ddot{\mu}_1e_1+2\dot{\mu}_1\dot{e}_1+\mu_1\ddot{e}_1.
\end{equation}
Taking the scalar product with $e_1$ in the expression above, using the symmetry of $L$, and the fact that $\expval{e_1,\dot{e}_1}=0$, we get
\begin{equation}
    \expval{e_1, \ddot{L}e_1}+2\expval{e_1,\dot{L} \dot{e}_1} =\ddot{\mu}_1\expval{e_1,e_1},
\end{equation}
which, using expression \eqref{eq:dot_e1} for $\dot{e}_1$ and the symmetry of $\dot{L}$, yields
\begin{equation}\label{eq:ddot_mu}
    \ddot{\mu}_1 = \frac{\expval{e_1, \ddot{L}e_1}}{\expval{e_1,e_1}} +  2\sum_{k\neq 1} \frac{1}{\mu_1-\mu_k}\frac{\smabs{\expval{e_k,\dot{L}e_1}}^2}{\expval{e_k,e_k}\expval{e_1,e_1}}.
\end{equation}

We remark that the right-hand side of expressions \eqref{eq:dot_mu}, \eqref{eq:dot_e1}, and \eqref{eq:ddot_mu} only involve the $e_k$ and $\mu_k$ at $a=0$. On the other hand, to define $\ddot{\mu}_1(0)$ we require that the continuations $\mu_1(a)$ and $e_1(a)$ of $\mu_1=\mu_1(a=0)$ and $e_1=e_1(a=0)$ exist and are sufficiently smooth in a neighbourhood of $a=0$. The existence of such continuations, that the operator $L(a)$ is symmetric for every $a$, and that $(e_k)$ be a basis of $\mathscr{H}$ are all the assumptions that we need to derive these expressions. In particular, we do \textit{not} assume that all the $\mu_k$ and $e_k$ exist in a neighbourhood of $a=0$ that is common to all $k\geq 0$. As explained in Remark \ref{remark:weak_operator}, it is unclear how one could obtain such common neighbourhood in our setting.

\subsection{The case of the linearized NMC operator}\label{subsec:case_of_lin_NMC}
Let us now turn to the particular case of
\begin{equation}
    L(a) :=L_1(a)= \frac{1}{2}D_uH_{\alpha}(w_1(a)).
\end{equation}
(We have dropped the subscript 1 from the operator $L_1(a)$ to avoid overloading the notation). Using Lemma \ref{NLMCLemma} and a simple change of variables we can write $L(a)$ as
\begin{multline}\label{def_L(a)}
    L(a)\psi = \int_{\mathbb{R}} (\psi(s)-\psi(t)) K\left(t, w_1(a)(s)-w_1(a)(t)\right)\dd{t} \\ 
    -\int_{\mathbb{R}} (\psi(s)+\psi(t)) K\left(t, w_1(a)(s)+w_1(a)(t)\right)\dd{t}.
\end{multline}
We will now show that the operator $L(a)$ is symmetric in $L^2_{even}(\mathbb{T}^1)$. For every $a\in (-\nu_1, \nu_1)$ and every $s,t \in \mathbb{R}$, we denote
\begin{equation}
    \mathcal{K}_{-}(a)(s,t) := K(s-t,w_1(a)(s)-w_1(a)(t))= \frac{A(a)(s,t)}{\abs{s-t}^{2+\alpha}},
\end{equation}
with
\begin{equation}
    A(a)(s,t) := \left(1+ \left(\frac{w_1(a)(s)-w_1(a)(t)}{s-t}\right)^2\right)^{-\frac{2+\alpha}{2}}
\end{equation}
and
\begin{equation}
    \mathcal{K}_{0}(a)(s,t) := K(s-t,w_1(a)(s)+w_1(a)(t)) = \left((s-t)^2 + (w_1(a)(s)+w_1(a)(t))^2\right)^{-\frac{2+\alpha}{2}}.
\end{equation}
The operator $L(a)$ acting on a function $v\in C^{1,\beta}_{even}(\mathbb{T}^1)$ can then be expressed as
\begin{equation}
    L(a)v(s) = \int_{\mathbb{R}} (v(s)-v(t))\mathcal{K}_{-}(a)(s,t) \dd{t} - \int_{\mathbb{R}} (v(s)+v(t))\mathcal{K}_0(a)(s,t) \dd{t},
\end{equation}
where, as always, the first integral should be understood in the principal value sense.

For every $a\in (-\nu_1, \nu_1)$ and every $s,t \in \mathbb{R}$, the kernels $\mathcal{K}_{-}$ and $\mathcal{K}_0$ satisfy
\begin{equation}\label{eq:symmetry_cal_K}
    \mathcal{K}_{-}(a)(s,t) = \mathcal{K}_{-}(a)(t,s),
\end{equation}
and, since $w_1(a)$ is $2\pi$-periodic and even,
\begin{equation}\label{eq:evenness_cal_K}
    \mathcal{K}_{-}(a)(-s,-t) = \mathcal{K}_{-}(a)(s,t)
\end{equation}
and
\begin{equation}\label{eq:periodicity_cal_K}
    \mathcal{K}_{-}(a)(s+2\pi, t+2\pi) = \mathcal{K}_{-}(a)(s,t),
\end{equation}
and similarly for $\mathcal{K}_0$.

The following lemma states that the operator $L(a)$ is symmetric in the space $L^2_{even}(\mathbb{T}^1)$. Its proof is standard and follows as an easy consequence of the properties above and Fubini's theorem. Since we have not easily found a reference for operators of this kind in the periodic setting, we provide it for completeness.
\begin{lemma}\label{lemma:L(a)_symmetric}
    For every $a\in (-\nu_1, \nu_1)$, the operator $L(a)$ is symmetric in $L^2_{even}(\mathbb{T}^1)$.
\end{lemma}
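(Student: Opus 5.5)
The plan is to show $\langle L(a)v, \psi\rangle = \langle v, L(a)\psi\rangle$ for all $v,\psi\in C^{1,\beta}_{even}(\mathbb{T}^1)$, where $\langle f,g\rangle=\int_{-\pi}^{\pi} fg\,\dd{s}$. I then extend to the natural domain by density. The first step is to rewrite each integral over $t\in\mathbb{R}$ as an integral over one period. Splitting $\mathbb{R}=\bigcup_{j\in\mathbb{Z}}[-\pi+2\pi j,\pi+2\pi j)$ and using the $2\pi$-periodicity of $v$ and $w_1(a)$, we get
\begin{equation}
L(a)v(s)=\int_{-\pi}^{\pi}(v(s)-v(t))\mathcal{K}^{per}_{-}(a)(s,t)\dd{t}-\int_{-\pi}^{\pi}(v(s)+v(t))\mathcal{K}^{per}_{0}(a)(s,t)\dd{t},
\end{equation}
with periodized kernels $\mathcal{K}^{per}_{\pm}(a)(s,t):=\sum_{j\in\mathbb{Z}}\mathcal{K}_{\pm}(a)(s,t+2\pi j)$. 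The sums converge for $s\ne t$ because $\mathcal{K}_{-},\mathcal{K}_0\lesssim |s-t-2\pi j|^{-2-\alpha}$ for $j\neq0$. The symmetry \eqref{eq:symmetry_cal_K} together with the periodicity \eqref{eq:periodicity_cal_K} gives $\mathcal{K}_{-}(a)(t,s+2\pi j)=\mathcal{K}_{-}(a)(t-2\pi j,s)=\mathcal{K}_{-}(a)(s,t-2\pi j)$. Reindexing $j\mapsto -j$ then shows that the periodized kernels are symmetric in $(s,t)$. The same argument applies to $\mathcal{K}_0$. Evenness \eqref{eq:evenness_cal_K} is only needed to guarantee that $L(a)$ maps even functions to even functions, so that the restriction to $L^2_{even}$ makes sense.

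The zero-order part is then straightforward. The term $\int\!\!\int (v(s)+v(t))\psi(s)\mathcal{K}^{per}_0\,\dd{t}\dd{s}$ has an integrable (indeed bounded) kernel on $[-\pi,\pi]^2$, since $w_1(a)\ge c>0$ for small $a$. Fubini and the symmetry of $\mathcal{K}^{per}_0$ give that it equals $\int\!\!\int v(s)\psi(s)\mathcal{K}^{per}_0+\int\!\!\int v(t)\psi(s)\mathcal{K}^{per}_0$. This expression is symmetric in $(v,\psi)$. The same holds for the $j\neq0$ terms of the singular kernel.

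The main obstacle is the principal value part, with kernel $\mathcal{K}_-(a)(s,t)=A(a)(s,t)|s-t|^{-2-\alpha}$ near the diagonal. This kernel is not integrable, so Fubini cannot be applied directly. I would truncate: set $L_\varepsilon$ to be the operator with the integral restricted to $|s-t|>\varepsilon$ (with $s-t$ taken modulo $2\pi$). For each $\varepsilon$, the symmetrization identity
\begin{equation}
\langle L_\varepsilon v,\psi\rangle=\frac12\iint_{|s-t|>\varepsilon}(v(s)-v(t))(\psi(s)-\psi(t))\mathcal{K}^{per}_-(a)(s,t)\dd{t}\dd{s}
\end{equation}
holds by Fubini and the symmetry of the kernel, and this form is manifestly symmetric. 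It then remains to show that $L_\varepsilon v\to L(a)v$ as $\varepsilon\to0$, for instance uniformly, or at least in $L^1$ against $\psi$. For $v\in C^{1,\beta}$ the integrand $(v(s)-v(t))\mathcal{K}_-$ has an odd leading part $v'(s)(s-t)|s-t|^{-2-\alpha}$ that cancels in the symmetric truncation. The remainder is bounded by $C|s-t|^{\beta-1-\alpha}$, which is integrable since $\beta>\alpha$. Here one uses that $A(a)(s,t)=A(a)(s,s)+O(|s-t|^{\beta})$ by the $C^{1,\beta}$ regularity of $w_1(a)$. This gives convergence with a uniform bound, and dominated convergence on both sides yields $\langle L(a)v,\psi\rangle=\langle v,L(a)\psi\rangle$.
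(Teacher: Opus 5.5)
Your argument is correct. It rests on the same mechanism as the paper's proof: Fubini, the symmetry $\mathcal{K}_\pm(a)(s,t)=\mathcal{K}_\pm(a)(t,s)$, and joint $2\pi$-periodicity, leading to the symmetric form \eqref{eq:bilinear_form_La}. The organization is different, though.

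The paper \emph{unfolds} rather than folds. It splits the $t$-integral into periods and shifts both variables by $2\pi k$, so that $\int_{-\pi}^{\pi}\dd{s}\int_{\mathbb{R}}\dd{t}$ becomes $\int_{\mathbb{R}}\dd{s}\int_{-\pi}^{\pi}\dd{t}$. It then swaps the order, renames variables, and adds the two expressions. This avoids periodized kernels altogether. However, the principal value is dismissed with the remark that the $k=0$ term ``should be treated separately'' and that this is ``easy to check''.

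Your folding to a symmetric periodized kernel on $[-\pi,\pi]^2$ costs you the (easy) convergence and symmetry check for $\mathcal{K}^{per}_\pm$. In exchange, your truncation step supplies exactly what the paper omits:
\begin{itemize}
\item The symmetrization identity holds rigorously for $L_\varepsilon$, since the truncated kernel is bounded and the region $\{d(s,t)>\varepsilon\}$ is symmetric.
\item The odd cancellation, together with the remainder bound $O(|s-t|^{\beta-1-\alpha})$ (which uses $w_1(a)\in C^{1,\beta}$ and $\beta>\alpha$), gives $L_\varepsilon v\to L(a)v$ uniformly.
\end{itemize}
It also lets you work directly on $C^{1,\beta}_{even}(\mathbb{T}^1)$, the domain of the strong operator, rather than with smooth functions. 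So the final density step you mention is not actually needed.

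One small correction. The $j\neq 0$ terms of $\mathcal{K}^{per}_-$ are not harmless in the same way as $\mathcal{K}^{per}_0$ (integrable, indeed bounded). The $j=\pm1$ terms are singular at the corners $(s,t)=(\pm\pi,\mp\pi)$ of $[-\pi,\pi]^2$. For the same reason, the series defining $\mathcal{K}^{per}_\pm$ converges for $s-t\notin 2\pi\mathbb{Z}$, not merely for $s\neq t$. These terms are still absolutely integrable near the corners, but only thanks to the factor $|v(s)-v(t)|\le C|s-t-2\pi j|$. Since you take the truncation with $s-t$ modulo $2\pi$ and apply it to the full periodized kernel, this is already covered by your principal value argument.
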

\begin{proof}
Using \eqref{eq:evenness_cal_K}, \eqref{eq:periodicity_cal_K}, and a simple change of variables, it is easy to see that $L(a)$ maps $2\pi$-periodic even functions to $2\pi$-periodic even functions.

Now, to see that $L(a)$ is symmetric, we proceed as follows. For every $u,v \in C^\infty (\mathbb{T}^1)$, we have
\begin{align}
    & \begin{multlined}[b][0.65\displaywidth]
      \expval{u,L(a)v} = \int_{-\pi}^\pi \dd{s} \int_{\mathbb{R}}\dd{t} u(s) (v(s)-v(t))\mathcal{K}_{-}(a)(s,t) \\ - \int_{-\pi}^\pi \dd{s} \int_{\mathbb{R}}\dd{t} u(s) (v(s)+v(t))\mathcal{K}_{0}(a)(s,t)
    \end{multlined}\label{eq:first_expression_u_dot_Lav} \\
    &\begin{multlined}[t][0.65\displaywidth]
       \hphantom{ \expval{u,L(a)v}} = \sum_{k\in \mathbb{Z}} \int_{-\pi}^\pi \dd{s}  \int_{(2k-1)\pi}^{(2k+1)\pi}\dd{t} u(s)(v(s)-v(t))\mathcal{K}_{-}(a)(s,t) \\ - \int_{-\pi}^\pi \dd{s} \int_{(2k-1)\pi}^{(2k+1)\pi}\dd{t} u(s) (v(s)+v(t))\mathcal{K}_{0}(a)(s,t).
    \end{multlined}
\end{align}
For every $k\in \mathbb{Z}$, we make the change of variables $(s,t) = (\bar{s}+2\pi k,\bar{t}+2\pi k)$. Then, \eqref{eq:periodicity_cal_K} yields
\begin{align}
    \expval{u,L(a)v} &= \begin{multlined}[t][0.65\displaywidth]
        \sum_{k\in \mathbb{Z}}  \int_{-(1+2k)\pi}^{(1-2k)\pi}\dd{\bar{s}}  \int_{-\pi}^\pi\dd{\bar{t}} u(\bar{s})(v(\bar{s})-v(\bar{t}))\mathcal{K}_{-}(\bar{s},\bar{t}) \\ - \int_{-(1+2k)\pi}^{(1-2k)\pi}\dd{\bar{s}}  \int_{-\pi}^\pi\dd{\bar{t}} u(\bar{s})(v(\bar{s})+v(\bar{t}))\mathcal{K}_{0}(\bar{s},\bar{t})
    \end{multlined}\\
        & \begin{multlined}[b][0.65\displaywidth]
      =\int_{\mathbb{R}}\dd{\bar{s}}  \int_{-\pi}^\pi\dd{\bar{t}} u(\bar{s})(v(\bar{s})-v(\bar{t}))\mathcal{K}_{-}(\bar{s},\bar{t}) \\ - \int_{\mathbb{R}}\dd{\bar{s}} \int_{-\pi}^\pi\dd{\bar{t}}u(\bar{s})  (v(\bar{s})+v(\bar{t}))\mathcal{K}_{0}(\bar{s},\bar{t})
    \end{multlined}\\
    & \begin{multlined}[b][0.65\displaywidth]
       =\int_{-\pi}^\pi\dd{\bar{t}} \int_{\mathbb{R}}\dd{\bar{s}}u(\bar{s})(v(\bar{s})-v(\bar{t}))\mathcal{K}_{-}(\bar{s},\bar{t}) \\ - \int_{-\pi}^\pi\dd{\bar{t}}\int_{\mathbb{R}}\dd{\bar{s}}  u(\bar{s}) (v(\bar{s})+v(\bar{t}))\mathcal{K}_{0}(\bar{s},\bar{t})
    \end{multlined}\\
    &\begin{multlined}[b][0.65\displaywidth]
      =\int_{-\pi}^\pi\dd{\bar{s}} \int_{\mathbb{R}}\dd{\bar{t}}u(\bar{t})(v(\bar{t})-v(\bar{s}))\mathcal{K}_{-}(\bar{t},\bar{s}) \\ -  \int_{-\pi}^\pi\dd{\bar{s}} \int_{\mathbb{R}}\dd{\bar{t}}u(\bar{t}) (v(\bar{t})+v(\bar{s}))\mathcal{K}_{0}(\bar{t},\bar{s})
    \end{multlined}
    \\
    &\begin{multlined}[b][0.65\displaywidth]
     = -  \int_{-\pi}^\pi\dd{\bar{s}} \int_{\mathbb{R}}\dd{\bar{t}}u(\bar{t})(v(\bar{s})-v(\bar{t}))\mathcal{K}_{-}(\bar{s},\bar{t}) \\ -  \int_{-\pi}^\pi\dd{\bar{s}} \int_{\mathbb{R}}\dd{\bar{t}}u(\bar{t}) (v(\bar{s})+v(\bar{t}))\mathcal{K}_{0}(\bar{s},\bar{t}),
    \end{multlined}\label{eq:second_expression_u_dot_Lav}
\end{align}
where we have used \eqref{eq:symmetry_cal_K} in the last equality. We note that in the third equality we have used Fubini's theorem to exchange the order of integration. To be completely rigorous, since the integral in $\bar{t}$ involving $\mathcal{K}_-$ is a principal value, the case $k=0$ should be treated separately. It is easy to check that doing this one obtains the same final expression. 

Adding \eqref{eq:second_expression_u_dot_Lav} and \eqref{eq:first_expression_u_dot_Lav} we conclude that
\begin{multline}\label{eq:bilinear_form_La}
    \expval{u,L(a)v}  = 
        \frac{1}{2}\int_{-\pi}^\pi \dd{s} \int_{\mathbb{R}}\dd{t} (u(s)-u(t)) (v(s)-v(t))\mathcal{K}_{-}(a)(s,t) \\ - \frac{1}{2}\int_{-\pi}^\pi \dd{s} \int_{\mathbb{R}}\dd{t} (u(s)+u(t)) (v(s)+v(t))\mathcal{K}_{0}(a)(s,t) = \expval{v, L(a)u},
\end{multline}
as we wanted to see.
\end{proof}
Recall that, by Lemma \ref{lemma:diagonalization_L}, when $a=0$, the operator $L(0)$ diagonalizes in the Fourier basis of cosines, $e_k=\cos(k\cdot)$, with corresponding eigenvalues $\mu_k$ given by \eqref{eq::expression_eigenvalues} for $R=R_1$ and $k\geq 0$. As we explained at the beginning of this section, we denote by $\mu_1(a)$ and $e_1(a)$ the second eigenvalue and eigenfunction of $L(a)$, respectively, which are the continuation of $e_1=e_1(a=0)$ and $\mu_1=\mu_1(a=0;R_1)$ along the bifurcation. We normalize $e_1(a)$ so that, denoting by $\expval{\cdot,\cdot}$ the scalar product of $L^2_{even}(\mathbb{T}^1)$,
\begin{equation}
    \expval{e_1(a), e_1(a)} = \expval{e_1(0), e_1(0)} = \int_{-\pi}^\pi \cos(s)^2 \dd{s} = \frac{\pi}{2}.
\end{equation}

Applying the formulae derived in the previous subsection to the operator $L(a)$ we obtain Proposition \ref{proposition:dotmu_ddotmu_dote1} below. Its proof consists of lengthy computations (all of which are equalities involving mainly trigonometric identities), and it is included in Subsection~\ref{subsec:explicit_rayleigh}. We note that even though expression \eqref{eq:ddot_mu} for $\ddot{\mu}_1$ contains infinitely many terms, in our setting on near-cylinders, all of them vanish except for the three appearing below. These three terms will be made even more explicit in Subsection \ref{subsec:explicit_rayleigh}. They involve some Fourier coefficients of the kernel $K(z)$, which will later be expressed in terms of standard special functions in Subsection \ref{subsec:asymptotic_alpha_1}. Such expression will be crucial to determine, in Section \ref{sec:stability_of_near_cyls}, the instability of the first branch of bifurcated near-cylinders, our main result.
\begin{proposition}\label{proposition:dotmu_ddotmu_dote1}
    Let $\alpha\in (0,1)$, and let $L(a)$ be as in \eqref{def_L(a)}. Then,
    \begin{equation}
        \dot{\mu}_1(0) = 0, \quad \ddot{\mu}_1(0) = \left(\frac{\expval{e_1, \ddot{L}e_1}}{\expval{e_1,e_1}} - \frac{2}{\mu_0}\frac{\smabs{\expval{e_0,\dot{L}e_1}}^2}{\expval{e_0,e_0}\expval{e_1,e_1}}- \frac{2}{\mu_2}\frac{\smabs{\expval{e_2,\dot{L}e_1}}^2}{\expval{e_2,e_2}\expval{e_1,e_1}}\right)\Bigg\vert_{a=0},
    \end{equation}
    and
    \begin{equation}
        \dot{e}_1(0) = \left(-\frac{1}{\mu_0}\frac{\expval{e_0,\dot{L}e_1}}{\expval{e_0,e_0}\expval{e_1,e_1}}e_0 - \frac{1}{\mu_2}\frac{\expval{e_2,\dot{L}e_1}}{\expval{e_2,e_2}\expval{e_1,e_1}}e_2\right)\Bigg\vert_{a=0},
    \end{equation}
    where $\expval{\cdot,\cdot}$ denotes the scalar product of $L^2_{even}(\mathbb{T}^1)$, $e_k = \cos(k\cdot)$, and the eigenvalues $\mu_k$ are given by \eqref{eq::expression_eigenvalues} for $R=R_1$.
\end{proposition}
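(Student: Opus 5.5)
We plan to apply the general formulas \eqref{eq:dot_mu}, \eqref{eq:dot_e1} and \eqref{eq:ddot_mu} of Subsection~\ref{subsec:a_general_expression_for_ddot_mu} to $L(a)$, using Lemma~\ref{lemma:L(a)_symmetric} for symmetry and the diagonalization of $L(0)$ from Lemma~\ref{lemma:diagonalization_L}. Here $\mu_1=\mu_1(R_1)=0$, so $\frac{1}{\mu_1-\mu_k}=-\frac{1}{\mu_k}$. This produces exactly the signs in the statement once we know that only $k=0,2$ contribute. (Existence and smoothness of the continuations $\mu_1(a)$, $e_1(a)$ are postponed to Subsection~\ref{subsec:continuation_eigenvalues}, as announced.) The whole proof therefore reduces to computing the Fourier structure of $\dot{L}=\dot L(0)$ acting on $e_1$.

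\textbf{Step 1: computing $\dot L$.} By Theorem~\ref{theorem_seq_bif} and Proposition~\ref{proposition:dot_gamma}, $w_1(a)=\gamma_1(a)R_1+a(\cos+v_1(a))$ with $\dot\gamma_1(0)=0$ and $v_1(0)=0$, so $\dot w_1(0)=\cos(\cdot)$. We differentiate \eqref{def_L(a)} in $a$ at $a=0$. Since $w_1(0)(s)-w_1(0)(t)\equiv0$ and $K_{z_2}(t,0)=0$, the kernel $K(t,w(s)-w(t))$ has vanishing first derivative; this is also clear from $A(a)$, which is quadratic in the difference quotient. The first integral thus contributes nothing. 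The second gives
\begin{equation}
\dot L\psi(s)=-\int_{\mathbb{R}}(\psi(s)+\psi(s-t))\,K_{z_2}(t,2R_1)\,(\cos s+\cos(s-t))\dd{t},
\end{equation}
after the change of variables $t\mapsto s-t$.

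\textbf{Step 2: Fourier support.} Take $\psi=e_1$ and expand $(\cos s+\cos(s-t))^2$ using product-to-sum identities. The resulting function of $s$ is a combination of $1$, $\cos(2s)$ and $\sin(2s)$ with $t$-dependent coefficients. The $\sin(2s)$ terms have coefficients odd in $t$, while $K_{z_2}(t,2R_1)$ is even in $t$, so they integrate to zero. Hence $\dot L e_1\in\mathrm{span}\{e_0,e_2\}$. Consequently $\langle e_k,\dot L e_1\rangle=0$ for $k\ne0,2$, and in particular $\dot\mu_1(0)=\langle e_1,\dot Le_1\rangle/\langle e_1,e_1\rangle=0$. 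Substituting into \eqref{eq:dot_e1} and \eqref{eq:ddot_mu} gives the two formulas. Note that $\mu_0<0$, and $\mu_2>\mu_1=0$ by Lemma~\ref{lemma:diagonalization_L}, so no denominator vanishes.

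\textbf{Main obstacle.} The hardest step is justifying the infinite-sum formula \eqref{eq:ddot_mu} rigorously. In particular, we need the series expansion of $\dot e_1$ in the basis $(e_k)$ to converge in a space where $\dot L$ and $\ddot L$ act, since these are singular integro-differential operators. In practice the sum collapses to finitely many terms, because $\dot L e_1$ has only two Fourier modes, which removes the convergence issue. The remaining care is the term-by-term differentiation under the principal value integral. For this we would rely on Lemma~\ref{lemma_H_Cinfty}: $a\mapsto L(a)$ is smooth as a map into bounded operators $C^{1,\beta}\to C^{\beta-\alpha}$, which justifies pairing against the smooth functions $e_k$.
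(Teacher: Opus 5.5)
Your proposal is correct and follows the paper's proof essentially step by step. You differentiate $L(a)$ at $a=0$ using $\dot w_1(0)=e_1$ (which needs $\dot\gamma_1(0)=0$) and $K_{z_2}(t,0)=0$, expand $(\cos s+\cos(s-t))^2$ to get $\dot L e_1\in\mathrm{span}\{e_0,e_2\}$ once the odd-in-$t$ term is discarded, and substitute into the general perturbation formulas with $\mu_1=0$. One small point: substituting literally into \eqref{eq:dot_e1} gives the coefficients of $\dot e_1(0)$ without the factor $1/\expval{e_1,e_1}$ that appears in the statement; that factor is a misprint in the paper (the proof of Proposition~\ref{proposition:rayleigh_asymptotic} uses the version without it), so your derivation gives the right formula.
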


\subsection{Continuation of the eigenvalues}\label{subsec:continuation_eigenvalues}
In this subsection, we prove that $\mu_1(a)$ and $e_1(a)$, which are an eigenvalue-eigenvector pair for $L(a)$, exist and are smooth functions of $a$ in a neighbourhood of $a=0$. Before we do this, we make the following observation, that we include in a remark for future reference.

\begin{remark}\label{remark:weak_operator}
 The operator $L(a)$ is defined most naturally as an operator between the spaces $C^{1,\beta}_{even}(\mathbb{T}^{1})$ and $C^{\beta- \alpha}_{even}(\mathbb{T}^1)$, since it arises from the application of the implicit function theorem to the NMC operator between these spaces. We call this the \textit{strong operator}. On the other hand, since $L(a)$ is a symmetric operator in $L^2_{even}(\mathbb{T}^1)$, one can also define a ``weak operator'' between $H^{(1+\alpha)/2}_{even}(\mathbb{T}^1)$ and $L^2_{even}(\mathbb{T}^1)$ via \eqref{eq:bilinear_form_La} and Lax-Milgram's theorem. Hence, by standard compactness arguments, one can show that, for every $a$, the operator $L(a)$ diagonalizes in an orthogonal basis $e_k(a)$ of $L^2_{even}(\mathbb{T}^1)$ with eigenvalues $\mu_k(a)$. At the same time, the implicit function theorem applied to the strong operator in the Hölder spaces $C^{1,\beta}_{even}(\mathbb{T}^1)$ and $C^{\beta- \alpha}_{even}(\mathbb{T}^1)$ gives the existence and smoothness with respect to $a$ of $(\mu_k(a), e_k(a))$ for every $k\geq 0$ in a neighbourhood of $a=0$ which may depend on $k$.

 By lack of regularity results (as we explain next), we cannot guarantee that the weak eigenfunctions are also strong eigenfunctions of $L(a)$. As a consequence, we do not have the Rayleigh quotient characterization for the strong eigenvalues. Therefore, when proving Lemma \ref{lemma:lower_bound_rayleigh} below, the second equality in \eqref{eq:proof_of_lemma_impossible} will not be guaranteed to hold, and thus the lemma will require an alternative proof. It will use the specific form of the integro-differential operator $L(a)$ and properties of its kernel.

 To give a hint on the difficulties encountered regarding the regularity of the weak eigenfunctions, recall that they are obtained through a weak formulation in the spaces $H^{(1+\alpha)/2}_{even}(\mathbb{T}^1)$ and $L^2_{even}(\mathbb{T}^1)$. To show that they are, in fact, strong eigenfunctions, we would need to show that they are of class $C^{1,\beta}$. Notice that this is the maximum regularity that one can expect from a divergence-form integro-differential operator of order $1+\alpha>1$ with $C^{\beta}$ coefficients, like our operator $L(a)$. Such regularity gain seems, according to some experts, not to be available for non-translation-invariant operators. In this direction, even to know whether $L(a)$ maps $H^{1+\alpha}(\mathbb{T}^1)$ into $L^2(\mathbb{T}^1)$ seems to be an open question. Note that when $a=0$ the operator $L(0)$ is the fractional Laplacian $(-\Delta)^{(1+\alpha)/2}$ plus a 0-order term. In this case, $L(0)$ does indeed map $H^{1+\alpha}$ into $L^2$, as can be shown using Fourier series thanks to the translation-invariance.
\end{remark}

For every $k\geq 1$, we will apply the implicit function theorem to the eigenvalue equation
\begin{equation}\label{eq:eigenvalue_problem}
    \mathcal{L}_k:(-\nu_1, \nu_1)\times \mathbb{R} \times X_k \to C^{\beta- \alpha}_{even}(\mathbb{T}^1), \quad \mathcal{L}_k(a, \mu, \varphi) = L(a)(e_k+\varphi)- \mu (e_k+\varphi) = 0,
\end{equation}
where $\nu_1$ is given by Theorem \ref{theorem_seq_bif},
\begin{equation}
    X_k := \{v\in C^{1,\beta}_{even}(\mathbb{T}^1) : \expval{v, e_k} = 0\},
\end{equation}
and $L(a)$ is defined in \eqref{def_L(a)}. By construction, we know that $\mathcal{L}_k(0, \mu_k, 0)=0$, where $\mu_k=\mu_k(0)$ is the $k$-th eigenvalue of $L(0)$, as given by \eqref{eq::expression_eigenvalues} for $R=R_1$. To apply the implicit function theorem, we need to check that the operator $\mathcal{L}_k$ is of class at least $C^2$ (since we want to calculate $\ddot{\mu}_1$) in a neighbourhood of $(0,\mu_k,0)$, and that its differential,
\begin{equation}
\begin{array}{cccl}
    D_{(\mu, \varphi)}\mathcal{L}_k(0,\mu_k, 0) : &\mathbb{R}\times X_k &\to &C^{\beta- \alpha}_{even}(\mathbb{T}^1)\\ 
    &(s, \psi)&\mapsto  &-se_k+(L(0) - \mu_k)\psi,
\end{array}
\end{equation}
is invertible. The proof of these facts is contained in the following proposition.
\begin{proposition}\label{prop:continuation_eigenvalues}
    Let $\alpha\in (0,1)$ and $\alpha < \beta< \min\{1, 2\alpha + \frac{1}{2}\}$. For every $k\geq 1$, the operator $\mathcal{L}_k$, as defined in \eqref{eq:eigenvalue_problem}, is of class $C^{\infty}$ in a neighbourhood of $(0,\mu_k,0)$, and its differential,
\begin{equation}
    \begin{array}{cccl}
    D_{(\mu, \varphi)}\mathcal{L}_k(0,\mu_k, 0) : &\mathbb{R}\times X_k &\to &C^{\beta- \alpha}_{even}(\mathbb{T}^1)\\ 
    &(s, \psi)&\mapsto  &-se_k+(L(0) - \mu_k)\psi,
\end{array}
\end{equation}
is invertible. 

As a consequence, for every $k\geq 1$ there exists $\tilde{\nu}_k\in (0, \nu_1)$, depending only on $k$, $\alpha$, and $\beta$, and a $C^\infty$ curve 
\begin{equation}
    \begin{array}{ccc}
    (-\tilde{\nu}_k, \tilde{\nu}_k) & \to &\mathbb{R}\times X_k\\ 
    a&\mapsto  &(\mu_k(a), \varphi_k(a)),
\end{array}
\end{equation}
such that, for every $a\in (-\tilde{\nu}_k, \tilde{\nu}_k)$,
\begin{equation}
    L(a)(e_k + \varphi_k(a)) = \mu_k(a)(e_k + \varphi_k(a)).
\end{equation}
That is, $e_k+\varphi_k(a)$ is an eigenfunction of $L(a)$ with eigenvalue $\mu_k(a)$. 

Moreover, $(\mu_k(a), \varphi_k(a))\to (\mu_k, 0)$ in $\mathbb{R}\times C^{1,\beta}_{even}(\mathbb{T}^1)$ as $a\to 0$.
\end{proposition}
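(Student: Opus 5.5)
We split the proof into two parts: smoothness of $\mathcal{L}_k$ near $(0,\mu_k,0)$, and invertibility of its partial differential in $(\mu,\varphi)$. The curve $(\mu_k(a),\varphi_k(a))$ and its convergence as $a\to 0$ then follow directly from the implicit function theorem in Banach spaces.

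\textbf{Smoothness.} Write $\mathcal{L}_k(a,\mu,\varphi)=L(a)(e_k+\varphi)-\mu(e_k+\varphi)$ with $L(a)=\frac12 D_uH_\alpha(w_1(a))$. By Lemma~\ref{lemma_H_Cinfty}, $H_\alpha:\mathcal{O}\to C^{\beta-\alpha}$ is $C^\infty$. Hence the map
\begin{equation}
u\mapsto D_uH_\alpha(u)\in\mathcal{B}\bigl(C^{1,\beta},C^{\beta-\alpha}\bigr)
\end{equation}
is $C^\infty$ on $\mathcal{O}$. By Theorem~\ref{theorem_seq_bif}, $a\mapsto w_1(a)$ is a smooth curve in $C^{1,\beta}_{even}(\mathbb{T}^1)$ with $\inf w_1(a)>0$ for small $|a|$. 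Composing, $a\mapsto L(a)$ is $C^\infty$ into the space of bounded operators. The map $(A,\psi)\mapsto A\psi$ is bounded bilinear, and the term $\mu(e_k+\varphi)$ is bilinear and continuous, using the embedding $C^{1,\beta}\hookrightarrow C^{\beta-\alpha}$. Therefore $\mathcal{L}_k$ is $C^\infty$. Evenness and periodicity are preserved by the change-of-variables remark in the proof of Theorem~\ref{theorem_seq_bif}, so $\mathcal{L}_k$ indeed maps into $C^{\beta-\alpha}_{even}(\mathbb{T}^1)$. At this point the restriction $\beta<2\alpha+\frac12$ is only inherited from the domain of the branch.

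\textbf{Invertibility.} This is the main step. We mimic Step 1 of the proof of Theorem~\ref{theorem_seq_bif}. Since $L(0)=L_{R_1}$, Lemma~\ref{lemma:diagonalization_L} gives $(L(0)-\mu_k)e_j=(\mu_j-\mu_k)e_j$. By part~(1) of that lemma the eigenvalues are strictly increasing, so they are simple, and $\mu_j-\mu_k\ne0$ for $j\ne k$.

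\emph{Injectivity.} Suppose $-se_k+(L(0)-\mu_k)\psi=0$ with $\psi\in X_k$. Taking the $L^2$ product with $e_k$ and using symmetry of $L(0)$ (Lemma~\ref{lemma:L(a)_symmetric}) gives $s\langle e_k,e_k\rangle=0$, so $s=0$. Then each Fourier coefficient of $\psi$ satisfies $(\mu_j-\mu_k)\hat\psi_j=0$, and $\hat\psi_k=0$ because $\psi\in X_k$, so $\psi=0$.

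\emph{Surjectivity.} Given $f\in C^{\beta-\alpha}_{even}$, set $s=-\langle f,e_k\rangle/\langle e_k,e_k\rangle$ and solve $(L(0)-\mu_k)\psi=f+se_k$, where the right-hand side now lies in $V_k^\perp:=\{\langle\cdot,e_k\rangle=0\}$. On Fourier coefficients, $\hat\psi_j=(\widehat{f+se_k})_j/(\mu_j-\mu_k)$. By part~(2) of Lemma~\ref{lemma:diagonalization_L}, $\mu_j\sim c\,j^{1+\alpha}$, so this defines $\psi\in H^{1+\alpha}_{even}\cap V_k^\perp$.

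The hard point is upgrading $\psi$ to $C^{1,\beta}$. As in \cite[Proposition 3.3]{AlcoverBruera}, we would write $L(0)-\mu_k=c_\alpha(-\Delta)^{(1+\alpha)/2}+T$, where $T$ is convolution with the smooth, integrable kernel $K(t,2R_1)$ plus a multiple of the identity. Then $T\psi\in C^{\beta-\alpha}$ once $\psi$ is, say, bounded. Standard Hölder (Schauder) estimates for the periodic fractional Laplacian then give
\begin{equation}
(-\Delta)^{(1+\alpha)/2}\psi\in C^{\beta-\alpha}\ \Longrightarrow\ \psi\in C^{1,\beta},
\end{equation}
where $\beta-\alpha+1+\alpha=1+\beta$ is non-integer since $\beta<1$. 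This gives a bounded inverse, by the open mapping theorem or directly from the estimates.

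\textbf{Conclusion.} The implicit function theorem applied at $(0,\mu_k,0)$ yields $\tilde\nu_k$ and the $C^\infty$ curve $a\mapsto(\mu_k(a),\varphi_k(a))$. Continuity at $a=0$ gives $(\mu_k(a),\varphi_k(a))\to(\mu_k,0)$ in $\mathbb{R}\times C^{1,\beta}_{even}(\mathbb{T}^1)$. The neighbourhood $\tilde\nu_k$ depends on $k$ because the spectral gap $\min_{j\ne k}|\mu_j-\mu_k|$ and the inverse bounds depend on $k$.
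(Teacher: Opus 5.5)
Your proposal is correct and follows the same route as the paper. Smoothness of $\mathcal{L}_k$ comes from Lemma~\ref{lemma_H_Cinfty} composed with the smooth branch $a\mapsto w_1(a)$, invertibility comes from the simplicity of $\mu_k$ via the Fourier and Schauder argument at the end of Step~1 of the proof of Theorem~\ref{theorem_seq_bif}, and the conclusion follows from the implicit function theorem; you simply spell out the details that the paper delegates to that step. One small fix: the multiple-of-the-identity part of $T$ needs $\psi\in C^{\beta-\alpha}$, not mere boundedness, which you get either from one extra bootstrap step or from $H^{1+\alpha}(\mathbb{T}^1)\hookrightarrow C^{\gamma}$ for every $\gamma<\alpha+\frac12$ (this reaches $\gamma=\beta-\alpha$ precisely because $\beta<2\alpha+\frac12$).
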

\begin{proof}
Since, by Theorem \ref{theorem_seq_bif}, the map $a\mapsto w_1(a)$ is smooth and $w_1(a)>0$ for $\abs{a}$ small enough, we know by Lemma \ref{lemma_H_Cinfty} that $L(a)=\frac{1}{2}D_uH(w_1(a))$ is of class $C^\infty$ in a neighbourhood of $a=0$. Since the operator $\mathcal{L}_k$ is linear in $\mu$ and $\varphi$, we conclude that $\mathcal{L}_k$ is also of class $C^\infty$ in a neighbourhood of $(0, \mu_k, 0)$.

It remains to prove that the differential $D_{(\mu, \varphi)} \mathcal{L}_k(0, \mu_k, 0)$ is invertible. For every $k \geq 1$, this follows easily from the fact that, since the eigenvalue $\mu_k$ is simple, the kernel of $L(0) - \mu_k$ is spanned by $e_k$. Therefore, repeating the argument at the end of Step 1 in the proof of Theorem~\ref{theorem_seq_bif}, we deduce that $D_{(\mu, \varphi)}\mathcal{L}_k$ is invertible at $(0, \mu_k, 0)$.

By the implicit function theorem applied to $\mathcal{L}_k$ at the point $(0,\mu_k, 0)$, we conclude that there exists a smooth curve $\{(\mu_k(a), \varphi_k(a)) : a\in (-\tilde{\nu}_k , \tilde{\nu}_k)\}$ of solutions of $\mathcal{L}_k(a, \mu_k(a), \varphi_k(a))=~0$ satisfying the desired properties.
\end{proof}
As we did in the previous subsection, we normalize the eigenvectors of $L(a)$ so that they have the same $L^2_{even}(\mathbb{T}^1)$ norms as the $e_k$. That is, throughout the rest of the paper, we set
\begin{equation}
    e_k(a) :=  \frac{\expval{e_k(0),e_k(0)}}{\expval{e_k(0),e_k(0)}+\expval{\varphi_k(a),\varphi_k(a)}}(e_k(0)+\varphi_k(a)),
\end{equation}
with $\varphi_k(a)$ given by Proposition \ref{prop:continuation_eigenvalues} above.

\section{On the stability of the near-cylinders}\label{sec:stability_of_near_cyls}

In this section, we prove the instability of the near-cylinders of Theorem \ref{theorem_seq_bif}.
To do this, we want to apply Lemma \ref{lemma_cylindrical} to the operator $L(a)~=~\frac{1}{2}D_uH_{\alpha}(w_1(a))$. Now, $n=2$ and $u=w_1(a)$. Note that, since $n=2$, the surface measure $\dd{S_u}$ is proportional to the one-dimensional Lebesgue measure. Therefore, the infimum in the definition of the Rayleigh quotient \eqref{def:rayleigh_quot} is taken over the subspace
\begin{equation}
    V_0(0)^\perp : =\{v\in H^{(1+\alpha)/2}_{even}(\mathbb{T}^1) : \textstyle \int_{-\pi}^\pi v(s)\dd{s} = 0\}.
\end{equation}
(The justification for this notation will be made clear in the following subsection). The main obstacle that we will encounter in calculating this infimum is that, as we mentioned before, for $a\neq 0$ (that is, when the set $E_m(a)$ is not a straight cylinder), the subspace $V_0(0)^\perp$ is \textit{not} invariant under $L(a)$. As a consequence, the Rayleigh quotient\footnote{Following Lemma \ref{lemma_cylindrical}, this Rayleigh quotient should be denoted by $\mathcal{R}_{w_1}$. We have chosen to write it as $\mathcal{R}_1$ to simplify the notation.}
\begin{equation}\label{eq:rayleigh_for_first_branch}
    \mathcal{R}_1(a) := \inf_{v\in V_0(0)^\perp} \frac{\expval{v, L(a)v}}{\expval{v,v}}
\end{equation}
appearing in Lemma \ref{lemma_cylindrical} is \emph{not} an eigenvalue of $L(a)$ when $a\neq 0$, and, therefore, this will create difficulties. It would indeed be an eigenvalue if the infimum were taken over the space
\begin{equation}
    V_0(a)^\perp
\end{equation}
instead of
\begin{equation}
    V_0(0)^\perp.
\end{equation}
As before, we denote by $\expval{\cdot, \cdot}$ the scalar product of $L^2_{even}(\mathbb{T}^1)$.

\subsection{An asymptotic formula for $\mathcal{R}_1(a)$}
Given $k_1,\dots,k_l\geq 1$ and $a\in (-\nu_1, \nu_1)$, we denote
\begin{equation}
    V_{k_1,\dots,k_l}(a) := \mathrm{span}\{e_{k_1}(a),\dots, e_{k_l}(a)\}
\end{equation}
and, with a small abuse of notation,\footnote{Indeed, $V_{k_1,\dots,k_l}(a)^\perp$ is not the orthogonal complement of $V_{k_1,\dots,k_l}(a)$ in $H^{(1+\alpha)/2}_{even}(\mathbb{T}^1)$, but the intersection with $H^{(1+\alpha)/2}_{even}(\mathbb{T}^1)$ of its orthogonal complement in $L^2_{even}(\mathbb{T}^1)$. However, since $V_{k_1,\dots,k_l}(a)\subset H^{(1+\alpha)/2}_{even}(\mathbb{T}^1)$, $V_{k_1,\dots,k_l}(a)^\perp$ is still a complement in $H^{(1+\alpha)/2}_{even}(\mathbb{T}^1)$. That is, the decomposition $v=v_{k_1,\dots,k_l} + v_{\perp}$ in $L^2_{even}(\mathbb{T}^1)$ of every $v\in H^{(1+\alpha)/2}_{even}(\mathbb{T}^1)$ holds also in $H^{(1+\alpha)/2}_{even}(\mathbb{T}^1)$, because $v_{\perp} = v-v_{k_1,\dots,k_l}\in H^{(1+\alpha)/2}_{even}(\mathbb{T}^1)$.}
\begin{equation}
    V_{k_1,\dots,k_l}(a)^\perp := \{ v\in H^{(1+\alpha)/2}_{even}(\mathbb{T}^1) : \expval{v,e_{k_1}(a)}=\dots =\expval{v, e_{k_l}(a)}=0 \},
\end{equation}
where $e_{k_i}(a)$ are eigenvectors of $L(a)$, as they were described in Subsection \ref{subsec:continuation_eigenvalues}. Here, and throughout the whole text, we have denoted by $H^{(1+\alpha)/2}_{even}(\mathbb{T}^1)$ the subspace of the fractional Sobolev space $H^{(1+\alpha)/2}(-\pi,\pi)$ consisting of $2\pi$-periodic even functions. Notice that the subspace $V_0(0)^\perp$ defined before is precisely the subspace of functions in $H^{(1+\alpha)/2}_{even}(\mathbb{T}^1)$ that are orthogonal in $L^2_{even}(\mathbb{T}^1)$ to $1=e_0(a=0)$ (which is the first eigenfunction of the operator $L(0)$).

To evaluate \eqref{eq:rayleigh_for_first_branch}, we will consider the decomposition
\begin{equation}
    H^{(1+\alpha)/2}_{even}(\mathbb{T}^1) = V_{0,1}(a)\oplus V_{0,1}(a)^\perp,
\end{equation}
and we will show that the infimum $\mathcal{R}_1$ is attained in $V_0(0)^\perp\cap V_{0,1}(a)$. This is precisely the content of the following proposition.
\begin{proposition}\label{proposition:infimum_attained}
    Let $\alpha\in (0,1)$ and $\alpha < \beta< \min\{1, 2 \alpha + \frac{1}{2}\}$. For all $\abs{a}$ small enough, depending only on $\alpha$ and $\beta$, the subspace \mbox{$V_0(0)^\perp\cap V_{0,1}(a)$} is nonzero (note that in this intersection the first subspace corresponds to $a=0$ and the second one, to $a\neq 0$), and
    \begin{equation}
        \mathcal{R}_1(a) = \inf_{v\in V_0(0)^\perp }\frac{\expval{v,L(a)v}}{\expval{v,v}}=\inf_{v\in V_0(0)^\perp\cap V_{0,1}(a)}\frac{\expval{v,L(a)v}}{\expval{v,v}}.
    \end{equation}
\end{proposition}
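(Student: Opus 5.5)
The first claim is dimension counting. The functions $e_0(a),e_1(a)$ obtained in Proposition~\ref{prop:continuation_eigenvalues} are linearly independent, since they are close in $C^{1,\beta}$ to $1$ and $\cos(\cdot)$. So $V_{0,1}(a)$ is two-dimensional. The space $V_0(0)^\perp$ is the kernel of the single functional $v\mapsto\int_{-\pi}^\pi v$, so $V_0(0)^\perp\cap V_{0,1}(a)$ has dimension at least one. In fact it has dimension exactly one, because $\int e_0(a)\neq0$ for small $|a|$. Fix a generator $w$ with $\|w\|=1$, and set $m(a):=\expval{w,L(a)w}$. The claim $\mathcal{R}_1(a)\le m(a)$ is trivial. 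The content of the proposition is the reverse inequality
\begin{equation}
    Q_a(v):=\expval{v,L(a)v}\ \ge\ m(a)\expval{v,v}\qquad\text{for all } v\in V_0(0)^\perp ,
\end{equation}
where $\expval{v,L(a)v}$ is understood through the bilinear form \eqref{eq:bilinear_form_La}. Since $e_0(a),e_1(a)$ are strong eigenfunctions and $L(a)$ is symmetric (Lemma~\ref{lemma:L(a)_symmetric}), the form $Q_a$ splits orthogonally. For $v=v_{01}+v_\perp$ with $v_{01}\in V_{0,1}(a)$ and $v_\perp\in V_{0,1}(a)^\perp$, we get $Q_a(v)=Q_a(v_{01})+Q_a(v_\perp)$, because the cross terms equal $\mu_i(a)\expval{e_i(a),v_\perp}=0$.

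\textbf{Step 1: coercivity on $V_{0,1}(a)^\perp$.} I would first prove the bound announced in Remark~\ref{remark:weak_operator}: there is $c>0$ (for instance $c=\mu_2/2$) such that $Q_a(z)\ge c\|z\|^2$ for all $z\in V_{0,1}(a)^\perp$ and all $|a|$ small. Since the weak eigenfunctions are not known to be strong, this cannot come from a min-max argument for $L(a)$, so I would argue by perturbation of the quadratic form. From the explicit kernels $\mathcal K_-(a)=A(a)|s-t|^{-2-\alpha}$ and $\mathcal K_0(a)$ one has $|A(a)-1|\le C|a|$, and $\mathcal K_0$ is smooth in $a$. This gives
\begin{equation}
    |Q_a(z)-Q_0(z)|\le C|a|\big([z]^2_{H^{(1+\alpha)/2}}+\|z\|^2\big).
\end{equation}
Next, $Q_0(z)=\sum_k\mu_k|\hat z_k|^2\tfrac{\pi}{2}$, and $\mu_k\sim k^{1+\alpha}$ by Lemma~\ref{lemma:diagonalization_L}(2). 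Hence $Q_0$ controls the $H^{(1+\alpha)/2}$ seminorm on the high modes. For $z\perp e_0(a),e_1(a)$, the low Fourier coefficients satisfy $|\hat z_0|,|\hat z_1|\le C|a|\,\|z\|$, because $e_i(a)-e_i=O(a)$ in $L^2$. Combining these estimates yields $Q_a(z)\ge(\mu_2-C|a|)\|z\|^2$.

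\textbf{Step 2: reduction to a two-dimensional problem.} On $V_{0,1}(a)$ the form $Q_a$ has eigenvalues $\mu_0(a)<0$ and $\mu_1(a)=O(a^2)$. In particular $m(a)\in[\mu_0(a),\mu_1(a)]$, so $m(a)<c/2$. Consider the constrained minimization defining $w$. It yields a Lagrange multiplier $\lambda$ with
\begin{equation}
    B_a(w,g)-m\expval{w,g}=\lambda\int_{-\pi}^\pi g\qquad\text{for all } g\in V_{0,1}(a),
\end{equation}
where $B_a$ is the bilinear form of $Q_a$. Writing $w$ in the basis $e_0(a),e_1(a)$, its $e_0(a)$-component is $O(a)$, since $\int e_1(a)=O(a)$; hence $\lambda=O(a)$. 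Choose $f\in V_{0,1}(a)$ with $B_a(w,f)=m\expval{w,f}$ and $\int f=1$. Then every $v\in V_0(0)^\perp$ decomposes as
\begin{equation}
    v=sw+rf+v_\perp,\qquad r=-\int_{-\pi}^\pi v_\perp=-\expval{v_\perp,1-e_0(a)}=O(a)\|v_\perp\|,
\end{equation}
where the last equality uses $v_\perp\perp e_0(a)$. In this decomposition $w$ is decoupled from $f$ and from $v_\perp$ for the form $Q_a-m\|\cdot\|^2$, and $Q_a(w)-m\|w\|^2=0$. Therefore
\begin{equation}
    Q_a(v)-m\|v\|^2=r^2\big(Q_a(f)-m\|f\|^2\big)+Q_a(v_\perp)-m\|v_\perp\|^2\ \ge\ \big(c-m-Ca^2\big)\|v_\perp\|^2\ \ge\ 0 .
\end{equation}
This gives the reverse inequality and proves the proposition.

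The main obstacle is Step~1. It requires a uniform lower bound for the weak quadratic form on the complement of two strong eigenfunctions. This needs a careful estimate of $\mathcal K_-(a)-\mathcal K_-(0)$ against the $H^{(1+\alpha)/2}$ seminorm, using only the $C^{1,\beta}$ regularity of $w_1(a)$. It also needs a check that $e_0(a),e_1(a)$ differ from $e_0,e_1$ by $O(a)$ in $L^2$. A secondary point is the existence of $f$ in Step~2, which requires $w$ not to be $(B_a-m)$-degenerate inside $V_{0,1}(a)$; this follows from $m\ne\mu_0(a)$ for small $a$.
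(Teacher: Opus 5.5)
Your Step 1 is essentially the paper's Lemma~\ref{lemma:lower_bound_rayleigh}. You are also right that coercivity on $V_{0,1}(a)^\perp$ alone does not finish the proof.

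The gap is in Step 2, at the choice of $f$. Apply your own Lagrange relation to $g=f$: it gives $B_a(w,f)-m\langle w,f\rangle=\lambda\int_{-\pi}^{\pi}f=\lambda$. So an $f\in V_{0,1}(a)$ with $\int f=1$ that is decoupled from $w$ exists only if $\lambda=0$.

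Here $\lambda\neq0$. Write $w=\omega_0e_0(a)+\omega_1e_1(a)$ and test with $g=e_0(a)$; this gives $\lambda\int e_0(a)=\omega_0(\mu_0(a)-m)\langle e_0(a),e_0(a)\rangle$. Both factors are nonzero:
\begin{itemize}
\item $\mu_0(a)<m$;
\item $\omega_0=-\omega_1Q(a)\neq0$ for small $a\neq0$, because $\dot Q(0)=-\mu_0^{-1}\langle e_0,\dot Le_1\rangle/\langle e_0,e_0\rangle\neq0$. This is the value computed in the proof of Proposition~\ref{proposition:rayleigh_asymptotic}, and it is nonzero since $K_{z_2}<0$.
\end{itemize}
So the nondegeneracy $m\neq\mu_0(a)$ you invoke is exactly what prevents $f$ from existing.

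With any legitimate $f$ (e.g.\ $f=e_0(a)/\int e_0(a)$), the expansion of $Q_a(v)-m\|v\|^2$ keeps the cross term $2sr\lambda=O(a^2)\|v_\perp\|$. This term is linear in $\|v_\perp\|$, so it cannot be absorbed by $(c-m)\|v_\perp\|^2$. Optimizing in $\|v_\perp\|$, your argument gives only
\[
m(a)-Ca^4\le\mathcal{R}_1(a)\le m(a).
\]

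No rearrangement of Step 2 can give the exact equality. If $\mathcal{R}_1(a)=m(a)$, then $w$ would minimize the Rayleigh quotient on $V_0(0)^\perp$. Then $L(a)w-mw$, which lies in $V_{0,1}(a)$, would have to be a multiple of the constant $1$; since $\lambda\neq0$, this forces $1\in V_{0,1}(a)$. There is no reason for that when $a\neq0$. Whenever it fails, you can move $w$ slightly along the projection of $1$ onto $V_{0,1}(a)^\perp$, with the appropriate sign and a compensating multiple of $f$ to keep zero mean. This strictly lowers the quotient.

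The paper's proof has the same hole. Its convex-combination argument bounds $\langle v,L(a)v\rangle$ from below by the Rayleigh quotient of $v_{0,1}$. But $v_{0,1}$ has mean $-\int v_\perp$, which is not zero in general, so this only yields $\mathcal{R}_1(a)\ge\mu_0(a)$.

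What is actually provable is the two-sided bound above. It suffices for Proposition~\ref{proposition:rayleigh_asymptotic}, where only the $a^2$ coefficient of $\mathcal{R}_1(a)$ matters.
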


The proof of this proposition is an easy consequence of the following lemma.
\begin{lemma}\label{lemma:lower_bound_rayleigh}
Let $\alpha\in (0,1)$ and $\alpha < \beta< \min\{1, 2 \alpha + \frac{1}{2}\}$. For $\abs{a}$ small enough, depending only on $\alpha$ and $\beta$,
    \begin{equation}
        \sup_{v\in V_{0,1}(a)} \frac{\expval{v, L(a)v}}{\expval{v,v}} < \inf_{v\in V_{0,1}(a)^{\perp}} \frac{\expval{v, L(a)v}}{\expval{v,v}}.
    \end{equation}
\end{lemma}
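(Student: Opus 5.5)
The plan is a perturbation argument around $a=0$ that uses only the bilinear form \eqref{eq:bilinear_form_La}. It does not rely on a Rayleigh-quotient characterization of the strong eigenvalues, which Remark~\ref{remark:weak_operator} warns we do not have.

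\textbf{Left-hand side.} For $v\in V_{0,1}(a)$, write $v=c_0e_0(a)+c_1e_1(a)$. Then
\[
\expval{v,L(a)v}=c_0^2\mu_0(a)\expval{e_0(a),e_0(a)}+c_1^2\mu_1(a)\expval{e_1(a),e_1(a)}+c_0c_1\bigl(\mu_0(a)+\mu_1(a)\bigr)\expval{e_0(a),e_1(a)} .
\]
For $a\neq0$ the vectors $e_0(a),e_1(a)$ need not be orthogonal, but Proposition~\ref{prop:continuation_eigenvalues} gives $e_k(a)\to e_k$ in $C^{1,\beta}$, so $\expval{e_0(a),e_1(a)}=o(1)$. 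Since $\mu_0(a)\to\mu_0<0$ and $\mu_1(a)\to\mu_1(R_1)=0$, this yields
\[
\sup_{V_{0,1}(a)}\frac{\expval{v,L(a)v}}{\expval{v,v}}\le \max\{\mu_0,\mu_1\}+o(1)=o(1).
\]
(The $k=0$ continuation also follows from Proposition~\ref{prop:continuation_eigenvalues}, since $\mu_0$ is simple.)

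\textbf{Right-hand side.} We need a positive lower bound, uniform for small $|a|$. Split $L(a)=L(0)+(L(a)-L(0))$ at the level of quadratic forms on $H^{(1+\alpha)/2}_{even}(\mathbb{T}^1)$.

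1. \emph{Perturbation of the form.} Let $Q_a(v):=\expval{v,L(a)v}$. From \eqref{eq:bilinear_form_La},
\[
Q_a(v)-Q_0(v)=\tfrac12\iint (v(s)-v(t))^2\bigl(\mathcal{K}_-(a)-\mathcal{K}_-(0)\bigr)-\tfrac12\iint (v(s)+v(t))^2\bigl(\mathcal{K}_0(a)-\mathcal{K}_0(0)\bigr).
\]
Here $\mathcal{K}_-(a)-\mathcal{K}_-(0)=(A(a)-1)\abs{s-t}^{-2-\alpha}$ with $\norm{A(a)-1}_{L^\infty}\le C\norm{w_1(a)'}_{L^\infty}^2=O(a^2)$. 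Also $\mathcal{K}_0(a)$ is bounded and integrable in $t$, with $\mathcal{K}_0(a)-\mathcal{K}_0(0)=O(a)$ uniformly; this uses $w_1(a)\ge R_1/2$ and the tail decay. Hence
\[
|Q_a(v)-Q_0(v)|\le \varepsilon(a)\bigl([v]^2_{H^{(1+\alpha)/2}}+\norm{v}_{L^2}^2\bigr),\qquad \varepsilon(a)\to0 .
\]

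2. \emph{Coercivity of $Q_0$.} By Lemma~\ref{lemma:diagonalization_L}, $Q_0(v)=\sum_k\mu_k c_k^2\norm{e_k}^2$. On $V_{0,1}(0)^\perp$ one has $\mu_k\ge\mu_2>0$ and $\mu_k\sim k^{1+\alpha}$. Therefore
\[
Q_0(v)\ge c\bigl([v]^2_{H^{(1+\alpha)/2}}+\norm{v}^2_{L^2}\bigr)\quad\text{for } v\in V_{0,1}(0)^\perp .
\]

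3. \emph{Transfer to $V_{0,1}(a)^\perp$.} Take $v\in V_{0,1}(a)^\perp$ and write $v=P_0v+(v-P_0v)$, where $P_0$ is the $L^2$ projection onto $V_{0,1}(0)$. Then
\[
|\expval{v,e_k}|=|\expval{v,e_k-e_k(a)}|\le o(1)\norm{v}_{L^2},
\]
so $\norm{P_0v}_{H^{(1+\alpha)/2}}\le o(1)\norm{v}_{L^2}$, because $P_0v$ lies in a fixed finite-dimensional smooth space. Expand $Q_0(v)=Q_0(v-P_0v)+2B_0(P_0v,v-P_0v)+Q_0(P_0v)$. The cross term vanishes since $V_{0,1}(0)$ is $L(0)$-invariant, and $|Q_0(P_0v)|=o(1)\norm{v}^2$. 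Combining with steps 1 and 2 (after absorbing the $H^{(1+\alpha)/2}$ seminorm) gives
\[
Q_a(v)\ge (c-o(1))\bigl([v]^2_{H^{(1+\alpha)/2}}+\norm{v}_{L^2}^2\bigr)-o(1)\norm{v}^2\ge \tfrac{c}{2}\norm{v}^2
\]
for $|a|$ small. Comparing with the left-hand side bound finishes the proof.

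\textbf{Main obstacle.} The hardest step is the uniform form estimate in step 1, specifically for the singular part: we must show $|A(a)-1|$ is controlled by $\norm{w_1(a)'}_\infty$ in the periodic setting, including the principal-value pieces near the diagonal and across periodic copies. The expected tool is the mean value theorem, giving $\bigl|\frac{w(s)-w(t)}{s-t}\bigr|\le\norm{w'}_\infty$. This works directly at the level of the symmetric bilinear form, which is exactly what avoids the regularity issue raised in Remark~\ref{remark:weak_operator}.
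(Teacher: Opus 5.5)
Your proposal is correct and follows essentially the same route as the paper. You perturb the quadratic form, controlling the singular part through $\|A(a)-1\|_\infty$ and the $\mathcal{K}_0$ part through its $L^2$ operator norm. You then use that $v\in V_{0,1}(a)^\perp$ has small components along $e_0,e_1$, invoke the gap $\mu_2(0)>0$, and compare with $\mu_1(a)\to 0$.

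The differences are minor. The paper writes the singular perturbation multiplicatively, $\expval{v,L^-(a)v}\ge(1-\varepsilon)\expval{v,L^-(0)v}$, so it needs only the $L^2$ spectral gap rather than your $H^{(1+\alpha)/2}$-coercivity of $Q_0$. It also uses $e_0(a)\perp e_1(a)$, which does hold by symmetry of $L(a)$ since $\mu_0(a)\neq\mu_1(a)$, to get the left-hand side exactly equal to $\mu_1(a)$; your hedging on orthogonality is unnecessary but harmless.
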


This result might seem trivial at first. Indeed, if we could write the whole spectral decomposition of $L(a)$, it would be immediate to check (since here we refer to the spaces $V_{0,1}(a)$ and not $V_{0,1}(0)$) that
\begin{equation}\label{eq:proof_of_lemma_impossible}
    \sup_{v\in V_{0,1}(a)} \frac{\expval{v, L(a)v}}{\expval{v,v}} = \mu_1(a) < \mu_2(a) = \inf_{v\in V_{0,1}(a)^{\perp}} \frac{\expval{v, L(a)v}}{\expval{v,v}}.
\end{equation}
The middle inequality would follow from the facts that $\mu_1(0)<\mu_2(0)$ and that the eigenvalues depend continuously on $a$. However, as we discussed in Remark \ref{remark:weak_operator}, we \textit{cannot} do this, as we do not have the whole spectral decomposition of $L(a)$ for $a\neq 0$, and thus the second equality in the expression above need not hold. Hence, we must proceed in a different way.

We note first that the lemma holds true for $a=0$, since, by Lemma~\ref{lemma:diagonalization_L}, the operator $L(0)$ diagonalizes in the basis $(e_k(a=0) = \cos(k\cdot))$ of $H^{(1+\alpha)/2}_{even}(\mathbb{T}^1)$, with corresponding eigenvalues given by \eqref{eq::expression_eigenvalues} for $R=R_1$. On the other hand, since the subspace $V_{0,1}(a)$ is finite-dimensional, for every $a$ we have that
\begin{equation}
    \sup_{v\in V_{0,1}(a)} \frac{\expval{v, L(a)v}}{\expval{v,v}} = \mu_1(a).
\end{equation}
Then, by continuity arguments based on the particular structure of the operator $L(a)$, we will be able to show that, for $\abs{a}$ small enough,
\begin{equation}
    \mu_1(a) < \inf_{v\in V_{0,1}(a)^{\perp}} \frac{\expval{v, L(a)v}}{\expval{v,v}}.
\end{equation}

\begin{proof}[Proof of Lemma \ref{lemma:lower_bound_rayleigh}]
First, we write the operator $L(a)$ as $L(a) = L^-(a)-L^0(a)$, with
\begin{equation}
    L^{-}(a) \psi = \int_{\mathbb{R}} (\psi(s)-\psi(t))\mathcal{K}_-(a)(s,t)\dd{t} \quad \text{and}\quad L^{0}(a) \psi = \int_{\mathbb{R}} (\psi(s)+\psi(t))\mathcal{K}_0(a)(s,t)\dd{t},
\end{equation}
and $\mathcal{K}_-$ and $\mathcal{K}_0$ as defined in Subsection \ref{subsec:case_of_lin_NMC}.

Let $\varepsilon >0$. By Theorem \ref{theorem_seq_bif}, we have that $A(a)\to A(0)\equiv 1$ uniformly in $(s,t)$ as $a\to 0$. Therefore, for $\abs{a}$ small enough, independently of $v$,
\begin{equation}\label{eq:lower_bound_L-}
    \expval{v, L^-(a)v} = \expval{v, L^-(0)v} + \expval{v, (L^-(a)-L^-(0))v} \geq (1- \varepsilon) \expval{v, L^-(0)v}.
\end{equation}

Now, for $L^0$ we note that since $\mathcal{K}_0(0)$ is a bounded function, $\expval{v,L^0(0)v}$ is comparable to $\expval{v,v}$. Moreover, the map $a\mapsto L^0(a)$ is continuous when viewing $L^0(a)$ as an operator acting on $L^2_{even}(\mathbb{T}^1)$. To see this, notice that $L^0(a)$ is essentially a Hilbert-Schmidt integral operator, and use the facts that $\mathcal{K}_0(a)\to \mathcal{K}_0(0)$ as $a\to 0$, and that
\begin{equation}
  \mathcal{K}_0(a)(s,t) \leq C(a) \min\{1, \abs{s-t}^{-2-\alpha}\}  
\end{equation}
for some constant $C(a)$ that remains bounded as $a\to 0$. Therefore, for $\abs{a}$ small enough, independently of $v$,
\begin{align}
    \expval{v, L^0(a)v} &= (1-\varepsilon)\expval{v, L^0(0)v} + \expval{v, (L^0(a)-L^0(0))v} +\varepsilon\expval{v, L^0(0)v}\\ 
    &\leq (1-\varepsilon)\expval{v, L^0(0)v} + ||L^0(a)-L^0(0)||_{L^2\to L^2} \expval{v, v} +\varepsilon ||L^0(0)||_{L^2\to L^2}\expval{v, v}\\
    &\leq (1-\varepsilon)\expval{v, L^0(0)v} + \varepsilon C\expval{v, v}, \label{eq:upper_bound_L0}
\end{align}
for some $C$ depending only on $||L^0(0)||_{L^2\to L^2}$.

Using \eqref{eq:lower_bound_L-} and \eqref{eq:upper_bound_L0} we have that
\begin{align}
    \expval{v, L(a)v} &= \expval{v, L^-(a)v} -  \expval{v, L^0(a)v} \\ &\geq (1- \varepsilon) \expval{v, L^-(0)v}- (1-\varepsilon)\expval{v, L^0(0)v} - \varepsilon C \expval{v, v}\\ 
    &= (1- \varepsilon) \expval{v, L(0)v} -\varepsilon C \expval{v, v}. \label{eq:bound_La_L0}
\end{align}

Now, let $v\in V_{0,1}(a)^\perp$, and assume that $\expval{v,v}=1$. Since the functions $e_k(0)=\cos(k\cdot)$ form a basis of $H^{(1+\alpha)/2}_{even}(\mathbb{T}^1)$, we may write $v = v^0e_0(0)+v^1e_1(0)+v_\perp$, with $v_\perp \in V_{0,1}(0)^\perp$. By definition of $V_{0,1}(a)^\perp$, we must have
\begin{equation}
    0 = \int_{-\pi}^\pi v(s)e_0(a)\dd{s} = \int_{-\pi}^\pi v(s)(e_0(0)+(e_0(0)-e_0(a)))\dd{s}.
\end{equation}
Therefore,
\begin{equation}
    \smabs{v^0} =  \frac{1}{\expval{e_0,e_0}}\abs{\int_{-\pi}^\pi v(s)(e_0(a)(s)-e_0(0)(s)) \dd{s}} \leq \frac{1}{\pi}\norm{e_0(a)-e_0(0)}_{L^2_{even}(\mathbb{T}^1)} .
\end{equation}
Since, by Proposition \ref{prop:continuation_eigenvalues}, we have that $e_0(a)\to e_0(0)$ uniformly as $a\to 0$, we deduce that, for every $\varepsilon>0$, picking $\abs{a}$ small enough, independently of $v$,
\begin{equation}\label{eq:bound_v0}
    \smabs{v^0}^2 \leq \frac{4}{\pi^2}\norm{e_0(a)-e_0(0)}_{L^2_{even}(\mathbb{T}^1)}^2 \leq \varepsilon.
\end{equation}
The same argument replacing $e_0$ by $e_1$ yields
\begin{equation}
    \smabs{v^1}^2 \leq \frac{1}{\pi^2}\norm{e_0(a)-e_0(0)}_{L^2_{even}(\mathbb{T}^1)}^2 \leq \varepsilon.
\end{equation}

Now, recall that, since $L(0)$ diagonalizes, we have that
\begin{equation}
    \mu_2(0) = \inf_{v\in V_{0,1}(0)^\perp} \frac{\expval{v,L(0)v}}{\expval{v,v}}.
\end{equation}
As a consequence, using this, \eqref{eq:bound_La_L0}, and \eqref{eq:bound_v0}, picking $\varepsilon$ small enough (independently of $v$), we have that, for $\abs{a}$ small enough, since $\mu_0(0)<0=\mu_1(0)<\mu_2(0)$,
\begin{align}
    \expval{v, L(a)v} &\geq (1-\varepsilon)\expval{v, L(0)v} - \varepsilon C \\ 
    & \geq (1-\varepsilon)\left\{\smabs{v^0}^2 \mu_0(0) \expval{e_0,e_0}+\smabs{v^1}^2 \mu_1(0) \expval{e_1,e_1} + \mu_2(0)\expval{v_{\perp},v_{\perp}} \right\}-\varepsilon C \\ 
    &\geq (1-\varepsilon)\left\{\varepsilon \mu_0(0)\expval{e_0,e_0} + \mu_2(0)(1-2\varepsilon)\right\} - \varepsilon C \\
    &\geq \frac{1}{2}\mu_2(0)\label{eq:lower_bound_mu2_half}
\end{align}
for all $v\in V_{0,1}(a)^\perp$ such that $\expval{v,v}=1$.

Finally, since, by continuity we have that, for $\abs{a}$ small enough,
\begin{equation}
    \mu_1(a)<\frac{1}{2}\mu_2(0),
\end{equation}
using this and \eqref{eq:lower_bound_mu2_half} we deduce that
\begin{equation}
    \sup_{v\in V_{0,1}(a)}\frac{\expval{v,L(a)v}}{\expval{v,v}} = \mu_1(a)< \inf_{v\in V_{0,1}(a)^\perp} \frac{\expval{v,L(a)v}}{\expval{v,v}},
\end{equation}
as we wanted to see.
\end{proof}
\begin{proof}[Proof of Proposition \ref{proposition:infimum_attained}]
    Let $v\in V_{0}(0)^\perp$, and assume that $\expval{v,v}=1$. We write $v = v_{0,1}+v_\perp$, with $v_{0,1}\in V_{0,1}(a)$ and $v_\perp \in V_{0,1}(a)^{\perp}$. Then, since the subspace $V_{0,1}(a)$ is invariant under $L(a)$, we have that
    \begin{align}
        {\expval{v,L(a)v}} &= {\expval{v_{0,1},L(a)v_{0,1}}} + {\expval{v_{\perp},L(a)v_{\perp}}}\\ &= 
        \frac{\expval{v_{0,1},L(a)v_{0,1}}}{\expval{v_{0,1}, v_{0,1}}} \expval{v_{0,1}, v_{0,1}} + \frac{\expval{v_{\perp},L(a)v_{\perp}}}{\expval{v_{\perp}, v_{\perp}}} \expval{v_{\perp}, v_{\perp}}.\label{eq:average_quotients}
    \end{align}
    By Lemma \ref{lemma:lower_bound_rayleigh}, the first quotient in \eqref{eq:average_quotients} is smaller than the second one. In addition, \eqref{eq:average_quotients} is a convex combination, since $\expval{v_{0,1}, v_{0,1}} + \expval{v_{\perp}, v_{\perp}}=\expval{v,v}=1$. As a consequence, we have that
    \begin{equation}
        \frac{\expval{v_{0,1},L(a)v_{0,1}}}{\expval{v_{0,1}, v_{0,1}}}  \leq {\expval{v,L(a)v}} \leq \frac{\expval{v_{\perp},L(a)v_{\perp}}}{\expval{v_{\perp}, v_{\perp}}}
    \end{equation}
    for all $v\in V_{0}(0)^\perp$ such that $\expval{v,v}=1$, for all $\abs{a}$ small enough. Therefore, $\expval{v,L(a)v}$ will attain its infimum whenever $v=v_{0,1}$, provided that the subspace $V_{0}(0)^\perp \cap V_{0,1}(a)$ is nonzero.

    It only remains to see that the subspace $V_{0}(0)^\perp \cap V_{0,1}(a)$ is nonzero. In fact, we will show that it is one-dimensional. Indeed, every $v\in V_0(0)^\perp\cap V_{0,1}(a)$ is of the form $v=v^0(a)e_0(a)+v^1(a)e_1(a)$, and it must satisfy
    \begin{equation}
        0 = \int_{-\pi}^{\pi} v(s)\dd{s} = v^0(a) \int_{-\pi}^\pi e_0(a)(s)\dd{s} + v^1(a) \int_{-\pi}^{\pi} e_1(a)(s)\dd{s}.
    \end{equation}
    Therefore,
    \begin{equation}
        v^0(a) = -v^1(a) \left(\frac{\int_{-\pi}^{\pi} e_1(a)(s)\dd{s}}{\int_{-\pi}^{\pi}  e_0(a)(s)\dd{s}}\right) =: -v^1(a) Q(a).
    \end{equation}
    Note that $Q(a)$ is well-defined, since $\int_{-\pi}^\pi e_0(0)(s)\dd{s}=\int_{-\pi}^\pi 1\dd{s}\neq 0$ and $e_0(a)$ depends continuously on $a$. As a consequence, we have that
    \begin{equation}\label{eq:one_dimensional}
        v = \lambda \left( e_1(a) - Q(a)e_0(a) \right),
    \end{equation}
    where $\lambda = v^1(a)\in \mathbb{R}$. Hence, $V_{0}(0)^\perp \cap V_{0,1}(a)$ is one-dimensional.
\end{proof}

We are now ready to state and prove the key ingredient in the proof of Theorem \ref{theorem_stability_near_cylinders}, Proposition \ref{proposition:rayleigh_asymptotic} below, which gives an asymptotic expression for $\mathcal{R}_1(a)$ when $a$ is close to 0. The sign of this expression for $a\neq 0$ will determine the stability of the first branch near-cylinders.
\begin{proposition}\label{proposition:rayleigh_asymptotic}
    For every $\alpha\in (0,1)$, we have
    \begin{equation}\label{eq:asymptotic_expression_ddot_R1}
        \mathcal{R}_1(a) :=\inf_{v\in V_0(0)^\perp} \frac{\expval{v, L(a)v}}{\expval{v,v}} = \left(\frac{1}{2}\frac{\expval{e_1, \ddot{L}e_1}}{\expval{e_1,e_1}} - \frac{1}{\mu_{2}}\frac{\smabs{\expval{e_{2},\dot{L}e_1}}^2}{\expval{e_{2},e_{2}}\expval{e_1,e_1}}\right)\Bigg\vert_{a=0} a^2 + O(a^3),
    \end{equation}
    where, we recall, $V_0(0)^\perp=\{ v \in H^{(1+\alpha)/2}_{even}(\mathbb{T}^1) : \int_{-\pi}^\pi v(s)\dd{s} = 0 \}$.
\end{proposition}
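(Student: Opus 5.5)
By Proposition~\ref{proposition:infimum_attained}, for $\abs{a}$ small the infimum $\mathcal{R}_1(a)$ is attained on the one-dimensional space $V_0(0)^\perp\cap V_{0,1}(a)$, spanned by $v(a):=e_1(a)-Q(a)e_0(a)$ as in \eqref{eq:one_dimensional}. Since $e_0(a),e_1(a)$ are orthogonal eigenfunctions of the symmetric operator $L(a)$ (Lemma~\ref{lemma:L(a)_symmetric}), we have the exact formula
\begin{equation}
\mathcal{R}_1(a)=\frac{\mu_1(a)\expval{e_1,e_1}+Q(a)^2\mu_0(a)\expval{e_0,e_0}}{\expval{e_1,e_1}+Q(a)^2\expval{e_0,e_0}},
\end{equation}
using the normalization $\expval{e_k(a),e_k(a)}=\expval{e_k(0),e_k(0)}$. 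The plan is to Taylor expand each ingredient to second order in $a$, using the smoothness from Proposition~\ref{prop:continuation_eigenvalues}. (This also covers $e_0(a)$, by the same implicit-function argument with $k=0$, since $\mu_0$ is simple.)

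First I would expand $Q(a)$. We have $\int_{-\pi}^\pi e_0(a)=2\pi+O(a)$ and $\int_{-\pi}^\pi e_1(a)=\int_{-\pi}^\pi(e_1(a)-e_1(0))=a\int_{-\pi}^\pi\dot e_1(0)+O(a^2)$. By Proposition~\ref{proposition:dotmu_ddotmu_dote1}, $\dot e_1(0)$ has $e_0$-component $-\frac{1}{\mu_0}\frac{\expval{e_0,\dot Le_1}}{\expval{e_0,e_0}\expval{e_1,e_1}}$ (with the normalization of that formula). Hence $Q(a)=q_1a+O(a^2)$, where $q_1$ is $-\expval{e_0,\dot Le_1}/(\mu_0\expval{e_0,e_0})$ up to the normalization constant, and so $Q(a)^2=q_1^2a^2+O(a^3)$. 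Since $\mu_0(a)=\mu_0+O(a)$, the numerator equals $\mu_1(a)\expval{e_1,e_1}+q_1^2\mu_0\expval{e_0,e_0}a^2+O(a^3)$. The denominator is $\expval{e_1,e_1}(1+O(a^2))$. Because $\mu_1(a)=O(a^2)$ (as $\mu_1(0)=0$ and, by Proposition~\ref{proposition:dotmu_ddotmu_dote1}, $\dot\mu_1(0)=0$), the denominator correction only contributes at order $a^4$.

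Therefore
\begin{equation}
\mathcal{R}_1(a)=\Big(\tfrac12\ddot\mu_1(0)+q_1^2\mu_0\tfrac{\expval{e_0,e_0}}{\expval{e_1,e_1}}\Big)a^2+O(a^3).
\end{equation}
I would then insert the formula for $\ddot\mu_1(0)$ from Proposition~\ref{proposition:dotmu_ddotmu_dote1}. Half of it contains the term $-\frac1{\mu_0}\frac{\smabs{\expval{e_0,\dot Le_1}}^2}{\expval{e_0,e_0}\expval{e_1,e_1}}$, and the $Q^2\mu_0$ term equals $+\frac1{\mu_0}\frac{\smabs{\expval{e_0,\dot Le_1}}^2}{\expval{e_0,e_0}\expval{e_1,e_1}}$. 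These should cancel exactly, leaving the $\ddot L$ term and the $e_2$ term, which is precisely \eqref{eq:asymptotic_expression_ddot_R1}. Conceptually, this is the expected effect of the volume constraint: it removes the $e_0$ channel.

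The main obstacle is the bookkeeping of the normalizations. The expression for $\dot e_1(0)$ in Proposition~\ref{proposition:dotmu_ddotmu_dote1} carries a factor $1/\expval{e_1,e_1}$, while \eqref{eq:dot_e1} does not, so I must recompute the $e_0$-coefficient of $\dot e_1$ directly from \eqref{eq:dot_e1}, i.e. $\expval{e_0,\dot Le_1}/((\mu_1-\mu_0)\expval{e_0,e_0})$, to verify the exact cancellation. A secondary point is to justify that the $O(a^3)$ remainders are uniform. This follows from the $C^\infty$ dependence on $a$ of $(\mu_k(a),e_k(a))$ for $k=0,1$ in $\mathbb{R}\times C^{1,\beta}$, and hence in $L^2$.
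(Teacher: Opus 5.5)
Your proposal is correct and follows essentially the same route as the paper. You reduce via Proposition~\ref{proposition:infimum_attained} to the one-dimensional space spanned by $e_1(a)-Q(a)e_0(a)$, expand the resulting exact convex-combination formula to order $a^2$ using $Q(0)=\mu_1(0)=\dot\mu_1(0)=0$, and observe that the term $\gamma\mu_0\dot Q(0)^2$ cancels the $e_0$-contribution in $\tfrac12\ddot\mu_1(0)$. Your caution about normalizations is well placed: the $e_0$-coefficient of $\dot e_1(0)$ must be taken from \eqref{eq:dot_e1}, i.e.\ without the extra factor $1/\langle e_1,e_1\rangle$ shown in the statement of Proposition~\ref{proposition:dotmu_ddotmu_dote1}, and this is what the paper's proof implicitly does to obtain the exact cancellation.
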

\begin{proof}
    Since $L(a)$ is symmetric in $L^2_{even}(\mathbb{T}^1)$, $e_0(a)$ and $e_1(a)$ are orthogonal in $L^2_{even}(\mathbb{T}^1)$. Hence, denoting $\gamma = \expval{e_0,e_0}/\expval{e_1,e_1}$, by Proposition \ref{proposition:infimum_attained}, using expression \eqref{eq:one_dimensional} we obtain
    \begin{equation}\label{eq:R_1_before_taylor}
        \mathcal{R}_1(a) =\mu_0(a) \frac{\gamma Q(a)^2}{1+\gamma Q(a)^2} + \mu_1(a) \frac{1}{1+ \gamma Q(a)^2}.
    \end{equation}
    
    Now, since $\int_{-\pi}^\pi e_1(0)(s)\dd{s} =\int_{-\pi}^\pi \cos(s)\dd{s}= 0$, we have that $Q(0)=0$. On the other hand, recall that, by Proposition \ref{proposition:dotmu_ddotmu_dote1}, $\mu_1(0)=\dot{\mu}_1(0)=0$. Therefore, expanding \eqref{eq:R_1_before_taylor} in $a$ we see that
    \begin{align}
        \mathcal{R}_1(a)&=\gamma (\mu_0(0)+O(a))(\dot{Q}(0)a+O(a^2))^2 + \frac{1}{2}\ddot{\mu}_1(0) a^2 + O(a^3)\\ 
        &= \left(\gamma\mu_0(0) \dot{Q}(0)^2 + \frac{1}{2}\ddot{\mu}_1(0)\right)a^2 + O(a^3).
    \end{align}

    To calculate $\dot{Q}(0)$, we must calculate $\int_{-\pi}^\pi \dot{e}_0(0)(s)\dd{s}$ and $\int_{-\pi}^\pi \dot{e}_1(0)(s)\dd{s}$. Since, by the same argument as in \eqref{eq:dote_othogonal_e}, we have that $\dot{e}_0(0) \in V_0(0)^\perp$, we deduce that
    \begin{equation}
        \int_{-\pi}^{\pi} \dot{e}_0(0)(s) \dd{s}= \expval{\dot{e}_0(0), e_0(0)} =  0.
    \end{equation}
    On the other hand, since $\int_{-\pi}^\pi e_2(0)(s)\dd{s} = \int_{-\pi}^\pi \cos(2s)\dd{s} =  0$, by Proposition \ref{proposition:dotmu_ddotmu_dote1} we have that
    \begin{align}
        \int_{-\pi}^\pi \dot{e}_1(0)(s)\dd{s} = \frac{-1}{\mu_0}\frac{\expval{e_0, \dot{L} e_1}}{\expval{e_0,e_0}}\bigg\vert_{a=0}\int_{-\pi}^\pi e_0(0)(s)\dd{s}.
    \end{align}
    Therefore,
    \begin{equation}
        \dot{Q}(0) = \frac{\int_{-\pi}^{\pi} \dot{e}_1(0)(s)\dd{s}}{\int_{-\pi}^{\pi}  e_0(0)(s)\dd{s}} = \frac{-1}{\mu_0}\frac{\expval{e_0, \dot{L} e_1}}{\expval{e_0,e_0}}\bigg\vert_{a=0}.
    \end{equation}
    Hence,
    \begin{equation}
        \mathcal{R}_1(a) =\left(
        \frac{1}{\mu_0}\frac{\smabs{\expval{e_0, \dot{L} e_1}}^2}{\expval{e_0,e_0}\expval{e_1,e_1}} + \frac{1}{2}\ddot{\mu}_1 \right)\Bigg\vert_{a=0} a^2 + O(a^3).
    \end{equation}
    Now, since by Proposition \ref{proposition:dotmu_ddotmu_dote1} we have that
\begin{equation}
    \ddot{\mu}_1(0) = \left(\frac{\expval{e_1, \ddot{L}e_1}}{\expval{e_1,e_1}} - \frac{2}{\mu_0}\frac{\smabs{\expval{e_0,\dot{L}e_1}}^2}{\expval{e_0,e_0}\expval{e_1,e_1}}- \frac{2}{\mu_{2}}\frac{\smabs{\expval{e_{2},\dot{L}e_1}}^2}{\expval{e_{2},e_{2}}\expval{e_1,e_1}}\right)\Bigg\vert_{a=0},
\end{equation}
we obtain the desired formula,
\begin{equation}
    \mathcal{R}_1(a) =\left(\frac{1}{2}\frac{\expval{e_1, \ddot{L}e_1}}{\expval{e_1,e_1}} - \frac{1}{\mu_{2}}\frac{\smabs{\expval{e_{2},\dot{L}e_1}}^2}{\expval{e_{2},e_{2}}\expval{e_1,e_1}}\right)\Bigg\vert_{a=0} a^2 + O(a^3).
\end{equation}
\end{proof}

\subsection{Instability of the near-cylinders}
By explicitly evaluating the expression for $\mathcal{R}_1(a)$ given above when $\alpha\uparrow 1$, we obtain the following proposition. Since its proof involves lengthy computations, we delay it to Subsection \ref{subsec:asymptotic_alpha_1}.
\begin{proposition}\label{proposition:asymptotic_alpha_to_1}
There exists $\alpha_0 \in (0,1)$ such that, for every $\alpha \in [\alpha_0,1)$ and every $\alpha < \beta< \min\{1, 2\alpha + \frac{1}{2}\}$, we have $\mathcal{R}_1(a)<0$ for $\abs{a}>0$ small enough (but nonzero), depending only on $\alpha$ and $\beta$.
\end{proposition}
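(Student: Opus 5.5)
By Proposition \ref{proposition:rayleigh_asymptotic}, the sign of $\mathcal{R}_1(a)$ for small $\abs{a}\neq 0$ is governed by the coefficient
\begin{equation}
    \Lambda(\alpha) := \frac{1}{2}\frac{\expval{e_1, \ddot{L}e_1}}{\expval{e_1,e_1}} - \frac{1}{\mu_{2}}\frac{\smabs{\expval{e_{2},\dot{L}e_1}}^2}{\expval{e_{2},e_{2}}\expval{e_1,e_1}}\Bigg\vert_{a=0}.
\end{equation}
It therefore suffices to show that $\Lambda(\alpha)<0$ for every $\alpha$ close enough to $1$. The $O(a^3)$ remainder is uniform once $\alpha$ is fixed, so the threshold on $\abs{a}$ then depends only on $\alpha$ and $\beta$.

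\textbf{Step 1: compute $\dot L$ and $\ddot L$ at $a=0$.} Differentiate the kernels $\mathcal{K}_-(a)$ and $\mathcal{K}_0(a)$ in $a$, using $w_1(a)=\gamma_1(a)R_1+a(\cos+v_1(a))$ together with $\dot\gamma_1(0)=0$ from Proposition \ref{proposition:dot_gamma}. The terms that appear are:
\begin{itemize}
\item[] in $\dot L$: $\dot{\mathcal{K}}_0 = K_{z_2}(s-t,2R_1)(\cos s+\cos t)$, while $\dot{\mathcal{K}}_-=0$ because $A$ is quadratic in the difference quotient;
\item[] in $\ddot L$: $\ddot{\mathcal{K}}_-=-(2+\alpha)\big(\frac{\cos s-\cos t}{s-t}\big)^2\abs{s-t}^{-2-\alpha}$, together with second-order $K_{z_2z_2}$ terms and terms involving $\ddot\gamma_1(0)$ and $\dot v_1(0)$ from $\mathcal{K}_0$.
\end{itemize}
The coefficients $\ddot\gamma_1(0)$ and $\dot v_1(0)$ have to be found first. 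Expand $\Phi=0$ to second order in $a$ and project onto $e_0$, $e_1$, $e_2$; this is the usual pitchfork computation, giving $\dot v_1(0)\in\mathrm{span}\{e_0,e_2\}$ and $\ddot\gamma_1(0)$ from the projection onto $e_1$ at third order. All inner products then reduce, via the product-to-sum identities, to Fourier-type integrals
\begin{equation}
\int_{\mathbb{R}}\cos(jt)\,\partial^i_{z_2}K(t,2R_1)\dd{t}, \qquad j=0,1,2,\ i\le 2,
\end{equation}
plus one singular integral coming from $\ddot{\mathcal{K}}_-$.

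\textbf{Step 2: express these integrals through special functions.} For $K(t,z)=(t^2+z^2)^{-(2+\alpha)/2}$, each Fourier integral is a modified Bessel function:
\begin{equation}
\int\cos(jt)(t^2+z^2)^{-\nu-1/2}\dd t = c_\nu\,(j/z)^{\nu}K_\nu(jz), \qquad \nu=\tfrac{1+\alpha}{2}.
\end{equation}
The local integrals $\int(1-\cos jt)\abs{t}^{-2-\alpha}\dd{t}$ are explicit gamma-function constants times $j^{1+\alpha}$. This yields a closed formula for $\Lambda(\alpha)$ in terms of $R_1$ and $\alpha$.

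\textbf{Step 3: take the limit $\alpha\uparrow1$.} This is the main obstacle. The local constant $\int(1-\cos t)\abs{t}^{-2-\alpha}\dd t$ behaves like $c(1-\alpha)^{-1}$, while the $\mathcal{K}_0$ terms near $z=2R_1\to0$ behave like $(2R_1)^{-1-\alpha}$. The equation $\mu_1(R_1)=0$ balances these against each other, which gives $R_1\sim C\sqrt{1-\alpha}$, consistent with \eqref{eq:asymptotic_R1_alpha_1}. To handle this:
\begin{itemize}
\item[] substitute $R_1=\rho\sqrt{1-\alpha}$ and use the small-argument expansion $K_\nu(x)\approx \frac12\Gamma(\nu)(x/2)^{-\nu}+\frac12\Gamma(-\nu)(x/2)^{\nu}+\dots$;
\item[] expand each term of $\Lambda$ to leading order in $1-\alpha$;
\item[] show that the leading-order coefficient is a strictly negative number.
\end{itemize}
Cancellations are likely at the top order, and the differences $\mu_2-\mu_1$ and $\mu_0$ are themselves of order $(1-\alpha)^{-1}$. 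So I would keep two orders in the expansion and track the gamma-function constants carefully. Continuity in $\alpha$ of every term then gives some $\alpha_0$ with $\Lambda(\alpha)<0$ on $[\alpha_0,1)$, and the proposition follows.
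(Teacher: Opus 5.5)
Your reduction to the sign of the $a^2$-coefficient $\Lambda(\alpha)$, and your Steps~1--2, follow the paper's route. The one difference is that you plan to compute $\ddot\gamma_1(0)$ and $\dot v_1(0)$ explicitly. The paper avoids this. It notices that $\delta_0\ddot w_1(0)=2\ddot\gamma_1(0)R_1+2\delta_0\dot v_1(0)$, which enters $\langle e_1,\ddot L e_1\rangle$, appears in exactly the same combination in the $e_1$-projection of $\frac{d^2}{da^2}\overline{\Phi}_1=0$. This gives directly
\[
\langle e_1,\ddot L e_1\rangle=\frac23\Big\langle e_1,\int_{\mathbb{R}}\big\{(\delta_-e_1)^3K_{z_2,z_2}(t,0)-(\delta_0e_1)^3K_{z_2,z_2}(t,2R_1)\big\}\,dt\Big\rangle .
\]
Your route reaches the same expression once the terms involving $\dot v_1(0)$ cancel against those of $\ddot\gamma_1(0)$, so this is only a matter of economy.

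The genuine gap is Step~3, which is the entire content of the proposition: you never determine the sign. Your final bullet, ``show that the leading-order coefficient is a strictly negative number,'' is the statement to be proved, and nothing in your outline rules out $\Lambda(\alpha)\ge 0$. Your expectation of a top-order cancellation, requiring a second-order expansion, also shows the sign is still undetermined. In fact there is no such cancellation. With $c=2R_1$, the formula above yields
\[
\Lambda(\alpha)=-\frac{2+\alpha}{2}\int_{\mathbb{R}}\frac{(1-\cos t)^2}{|t|^{4+\alpha}}\,dt-\frac12\int_{\mathbb{R}}(1+\cos t)^2K_{z_2,z_2}(t,c)\,dt-\frac{1}{4\mu_2}\Big(\int_{\mathbb{R}}(1+2\cos t+\cos 2t)K_{z_2}(t,c)\,dt\Big)^2 .
\]
The three terms can be signed separately:
\begin{itemize}
\item The first term is negative.
\item The third term is $\le 0$, because $\mu_2>\mu_1=0$ by Lemma~\ref{lemma:diagonalization_L}.
\item Scaling $t=c\tau$, the middle term equals $-2c^{-3-\alpha}\big(\int_{\mathbb{R}}K_{z_2,z_2}(\tau,1)\,d\tau+o(1)\big)$ as $c\to 0$. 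Here $\int_{\mathbb{R}}K_{z_2,z_2}(\tau,1)\,d\tau=(1+\alpha)(2+\alpha)\int_{\mathbb{R}}K(\tau,1)\,d\tau>0$. So this term is negative once $R_1$ is small.
\end{itemize}
Finally, $R_1\to 0$ as $\alpha\uparrow 1$, since the local part of $\mu_1(R_1)=0$ diverges.

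The paper carries this out through the Bessel/Gamma closed forms and the precise limit $c^{1+\alpha}\Gamma(\frac{1-\alpha}{2})\to 4\sqrt{\pi}$. It obtains $(1-\alpha)^2\Lambda(\alpha)\to-\frac{2}{\pi}\big(3+\frac{2}{2\sqrt{\pi}-1}\big)<0$. Some computation of this kind has to be supplied for your argument to prove anything.
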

\begin{remark}\label{remark:generalizations}
    Throughout the whole text, we have denoted the kernel in the NMC operator simply by $K$, without writing it explicitly. This is because all the results presented in this paper up to Proposition \ref{proposition:asymptotic_alpha_to_1} above can be generalized in a relatively straightforward way to a broader class of anisotropic kernels as those considered in \cite{AlcoverBruera}\footnote{Still, one would need to prove that the corresponding anisotropic NMC operator is sufficiently smooth. In \cite{AlcoverBruera} it is shown that the anisotropic NMC operator is of class $C^1$ whenever $K$ satisfies some regularity assumptions. To extend the results in this paper to the anisotropic setting, one would need to show that the anisotropic NMC operator is of class at least $C^2$.}---for example, kernels of the form $K(z) = \norm{z}_K^{-2-\alpha}$, with $\norm{\cdot}_K$ a norm in $\mathbb{R}^2$ such that $\norm{(z_1,z_2)}_K=\norm{(z_1,-z_2)}_K$, and satisfying at least one of the assumptions in \cite[Theorem 1.3]{AlcoverBruera}. However, our proof of Proposition~\ref{proposition:asymptotic_alpha_to_1} requires explicit computations that depend on the particular form of $K$, therefore it is only valid for $K(z)=\abs{z}^{-2-\alpha}$.
\end{remark}
Finally, we have all the ingredients to prove Theorem \ref{theorem_stability_near_cylinders}.
\begin{proof}[Proof of Theorem \ref{theorem_stability_near_cylinders}.]
    The first part of \textit{(1)} was already stated in Proposition \ref{proposition:stab_straight_cylinders}. For the asymptotic behaviour of $R_1$ when $\alpha \uparrow 1$, we have, by \eqref{eq:asymptotic_c_alpha_1}, that
    \begin{equation}
        R_1^{1+\alpha}\Gamma\left(\frac{1-\alpha}{2}\right) = O(1)
    \end{equation}
    when $\alpha \uparrow 1$. Using that $\Gamma(s)=O(\frac{1}{s})$ as $s\downarrow 0$, we deduce that $R_1=O(\sqrt{1-\alpha})$.

    Let us turn to \textit{(2)}. By Proposition \ref{proposition:asymptotic_alpha_to_1} and Lemma \ref{lemma_cylindrical}, we have that, for all $\alpha\geq \alpha_0$, the $E_1(a)$ are unstable for $\abs{a}>0$ small enough. Note that we must exclude the case $a=0$ since it corresponds to the straight cylinder of radius $R_1$, which is semi-stable (i.e., $\mathcal{R}_1(0)=0$).

    Finally, to prove $(3)$ we must show that the $E_m(a)$ are unstable for all $m\geq 2$ and all $\alpha\in (0,1)$ provided that $\abs{a}$ is taken small enough. To see this, for every $m\geq 2$ we will repeat the arguments used in the proofs of Proposition~\ref{proposition:infimum_attained} and Lemma~\ref{lemma:lower_bound_rayleigh} with the operator 
    \begin{equation}
        L_m(a) := \frac{1}{2}D_uH_{\alpha}(w_m(a))
    \end{equation}
    in place of $L(a)$. 

    Let $m\geq 2$, and let us denote by $ \mu_k(a; R_m)$ and $e_k(a;R_m)$ the continuations of the $k$-th eigenvalue and eigenvector of $L_m(0)$ along the $m$-th bifurcation branch. Recall, that, by Theorem~\ref{theorem_seq_bif} and Lemma \ref{lemma:diagonalization_L}, the second eigenvalue of $L_m(0)$ is negative for $m\geq 2$, that is, we now have $\mu_1(a=0;R_m)<0$. 

    We note first that, for every $m\geq 2$, Lemma \ref{lemma:lower_bound_rayleigh} holds for $L_m(a)$ too. To see this, we repeat all steps in its proof until the chain of inequalities leading to \eqref{eq:lower_bound_mu2_half}. Then, since we now have $\mu_0(0; R_m)<\mu_1(0;R_m)<\mu_2(0;R_m)\leq 0$, we simply replace the final lower bound $\frac{1}{2}\mu_2(0)$ in \eqref{eq:lower_bound_mu2_half} by $\mu_2(0;R_m)-\tilde{\varepsilon}$ for some sufficiently small $\tilde{\varepsilon}>0$ so that the inequality
    \begin{equation}
            \mu_1(a;R_m) < \mu_2(0;R_m)-\tilde{\varepsilon}
    \end{equation}
    holds for $\abs{a}$ small enough. From this, we deduce that Proposition \ref{proposition:infimum_attained} holds for $L_m(a)$ too. As a consequence, plugging equation \eqref{eq:one_dimensional} in the Rayleigh quotient for $L_m(0)$ we arrive to an expression analogous to \eqref{eq:R_1_before_taylor}; namely,
    \begin{equation}\label{eq:rayleigh_for_Lm}
         \mathcal{R}_m(a) := \inf_{v\in V_0(0)^\perp} \frac{\expval{v,L_m(a)v}}{\expval{v,v}} = \mu_0(a;R_m) \frac{\gamma Q_m(a)^2}{1+\gamma Q_m(a)^2} + \mu_1(a;R_m) \frac{1}{1+ \gamma Q_m(a)^2},
     \end{equation} 
     with $\gamma = \expval{e_0,e_0}/\expval{e_1,e_1}$ and
     \begin{equation}
         Q_m(a):= \frac{\int_{-\pi}^\pi e_1(a;R_m)(s)\dd{s}}{\int_{-\pi}^\pi e_0(a;R_m)(s)\dd{s}}.
     \end{equation}
     
     Recall that, since $m\geq 2$, we now have $\mu_0(0;R_m)<\mu_1(0;R_m)<0$, and, therefore, also $\mu_0(a;R_m)<\mu_1(a;R_m)<0$ for $\abs{a}$ small enough. Hence, from \eqref{eq:rayleigh_for_Lm} we deduce that $\mathcal{R}_m(a)<0$ for $\abs{a}$ small enough. Therefore, by Lemma \ref{lemma_cylindrical} we conclude that $E_m(a)$ is unstable for $\abs{a}$ small enough, depending only on $\alpha$, $\beta$, and $m$.
\end{proof}

\begin{proof}[Proof of Corollary \ref{cor::classification_minimizers}]
    We assume that $\alpha\in [\alpha_0,1)$; the proof for $\alpha\in (0,1)$ is essentially the same assuming $R\neq R_1$, with the obvious modifications.

First of all, we recall that, by Theorem~\ref{theorem_seq_bif}, there exists a neighbourhood of $\{(R, u_R):R>0\}$ in $\mathbb{R}\times C^{1,\beta}_{even}(\mathbb{T}^1)$ such that all the solutions of the equation $H_\alpha(u)=h_R$ in this neighbourhood are the trivial solutions $(R,u_R)$, with $R>0$, and the functions given by \eqref{eq::expression_solutions}. In other words, for every $R>0$ there exists $\varepsilon > 0$, depending only on $\alpha$, $R$, and $\beta$, such that if $(R', v)\in \mathbb{R}\times C^{1,\beta}_{even}(\mathbb{T}^1)$ solves $H_{\alpha}(v)=h_{R'}$, and $\abs{R'-R}+\norm{v-u_R}_{C^{1,\beta}_{even}(\mathbb{T}^1)}<\varepsilon$, then, either $v=u_{R'}$, or $R'=\gamma_m(a)R_m$ and $v=w_m(a)$ for some $m\geq 1$ and some $a\in (-\nu_m, \nu_m)$. 

    Now, for a given $R>0$, let $E_k = \{(z_1,z_2) : \abs{z_2} < u_k(z_1)\}\subset \mathbb{R}^2$ be a sequence of even and cylindrically symmetric volume-constrained critical points of $\mathcal{P}_\alpha$, each with constant nonlocal mean curvature equal to $h_k$, such that $u_k$ converges to $u_R\equiv R$ in $C^{1,\beta}_{even}(\mathbb{T}^1)$. Since the map $C^{1,\beta}_{even}(\mathbb{T}^1)\ni v \mapsto H_\alpha(v)$ is continuous, we have that $h_k \to h_R$ as $k\to +\infty$. Moreover, since, by Lemma \ref{lemma_H_Cinfty} and expression \eqref{eq:expression_hR}, the map $\tilde{R} \mapsto h_{\tilde{R}}$ is continuous and decreasing (and therefore bijective), we have that, for all $k$ large enough, $h_k = h_{R_k'}$ for some positive sequence $(R_k')$ converging to $R$ as $k\to +\infty$. As a consequence, for every $k$ large enough, $u_k$ solves the equation $H_\alpha(u_k)=h_{R_k'}$ for some $R_k'>0$, and $\smabs{R_k'-R}+\norm{u_k-u_R}_{C^{1,\beta}_{even}(\mathbb{T}^1)}< \varepsilon$. By our earlier observation, we deduce then that, for every $k$ large enough, either $u_k = u_{R_k'}$, or $R_k'=\gamma_m(a_k)R_m$ and $u_k=w_m(a_k)$ for some $m\geq 1$ and some $a_k\in (-\nu_m, \nu_m)$.

    Assume now that every $E_k$ is stable. Since, by Theorem \ref{theorem_stability_near_cylinders}, all non-straight near-cylinders and all straight cylinders of radius $R<R_1$ are unstable, we conclude that, for all $k$ large enough, $u_k = u_{R_k'}$, i.e., $E_k$ is a straight cylinder of radius $R_k'$ and, moreover, $R_k' \geq R_1$.
\end{proof}

\section{Computations for the Rayleigh quotient on the near-cylinders}\label{sec:computations}

In this section, we include all the computations needed to prove propositions \ref{proposition:dotmu_ddotmu_dote1} and \ref{proposition:asymptotic_alpha_to_1} that were omitted in the previous sections. 

\subsection{Proof of Proposition \ref{proposition:dotmu_ddotmu_dote1}}\label{subsec:explicit_rayleigh}

In this subsection we will prove Proposition \ref{proposition:dotmu_ddotmu_dote1}. To simplify the notation, following \cite{CNLMCDelCyl} and \cite{AlcoverBruera}, given a function $\varphi : \mathbb{R}\to \mathbb{R}$ and $s,t \in \mathbb{R}$, we will denote
\begin{equation}
    \delta_-\varphi(s,t) = \varphi(s)-\varphi(s-t)
\end{equation}
and
\begin{equation}
    \delta_0\varphi(s,t) = \varphi(s)+\varphi(s-t)
\end{equation}
throughout the whole section.

Similarly as we did in Subsection \ref{subsec:a_general_expression_for_ddot_mu}, in the following, all expressions are assumed to be evaluated at $a=0$ unless otherwise stated. In particular, we will write
\begin{equation}
    e_k(s) = e_k(a=0)(s) = \cos(ks).
\end{equation}

To prove Proposition \ref{proposition:dotmu_ddotmu_dote1} we need to evaluate expressions \eqref{eq:dot_mu}, \eqref{eq:dot_e1}, and \eqref{eq:ddot_mu} derived in Subsection \ref{subsec:a_general_expression_for_ddot_mu} for general families of symmetric operators in the case in which $L(a)$ is the linearized NMC operator on the first branch of near-cylinders. We repeat them here for easier reference:
\begin{equation}\label{eq:dot_mu_sec6}
     \dot{\mu}_1 = \frac{\expval{e_1,\dot{L}e_1}}{\expval{e_1,e_1}},
 \end{equation} 
 \begin{equation}\label{eq:dot_e1_sec6}
    \dot{e}_1 = \sum_{k\neq 1} \frac{1}{\mu_1 - \mu_k}\frac{\expval{e_k, \dot{L} e_1}}{\expval{e_k,e_k}}e_k,
\end{equation}
and
\begin{equation}\label{eq:ddot_mu_sec6}
    \ddot{\mu}_1 = \frac{\expval{e_1, \ddot{L}e_1}}{\expval{e_1,e_1}} +  2\sum_{k\neq 1} \frac{1}{\mu_1-\mu_k}\frac{\smabs{\expval{e_k,\dot{L}e_1}}^2}{\expval{e_k,e_k}\expval{e_1,e_1}}.
\end{equation}
Instead of \eqref{def_L(a)}, it will more convenient to use the following equivalent expression for the operator $L(a)=\frac{1}{2}H_{\alpha}(w_1(a))$,
\begin{equation}\label{def_L(a)_sec6}
    L(a)\psi  = \int_{\mathbb{R}} \delta_-\psi K(t,\delta_-w_1(a))\dd{t} - \int_{\mathbb{R}} \delta_0\psi K(t,\delta_0 w_1(a))\dd{t},
\end{equation}
where, recall,
\begin{equation}
    w_1(a)(s) = \gamma_1(a)R_1 + a(\cos(s) + v_1(a)(s)). 
\end{equation}

\begin{proof}[Proof of Proposition \ref{proposition:dotmu_ddotmu_dote1}]
We compute $\dot{L}(a)=\frac{1}{2}D_aD_{u}H_{\alpha}(w_1(a))$ first. Differentiating expression \eqref{def_L(a)_sec6} with respect to $a$ we obtain
\begin{equation}\label{dot_La_sec6}
    \dot{L}(a) \psi =\int_{\mathbb{R}} \left\{\delta_- \psi \delta_- \dot{w}_1(a) K_{z_2}\left(t, \delta_- w_1(a)\right) - \delta_0 \psi \delta_0 \dot{w}_1(a) K_{z_2}\left(t, \delta_0 w_1(a)\right)\right\}\dd{t}.
\end{equation}
Using that $\dot{w}_1(0) = e_1$ (since $\dot{\gamma}_1(0)=0$ by Proposition \ref{proposition:dot_gamma}) and $K_{z_2}(t,0)=0$, we obtain, at $a=0$,
\begin{align}
   \dot{L} e_1 &= \int_{\mathbb{R}} -\left(\delta_0 e_1\right)^2 K_{z_2}(t, 2R_1)\dd{t}\\ 
   &= -\frac{1}{2}\int_{\mathbb{R}} 
       \left\{2[1+\cos(t)]e_0+[1+2\cos(t)+\cos(2t)]e_{2}\right\}K_{z_2}(t,2R_1)\dd{t},
  \label{L'_en}
\end{align}
where we have used the identities
\begin{align}
    \left(\delta_0 e_1\right)^2(s,t)  &= (\cos(s)+\cos(s-t))^2 \\ 
    &= ( (1+\cos(t))\cos(s)+\sin(t)\sin(s))^2 \\ 
    &= \begin{multlined}[t][0.65\displaywidth]
        (1+\cos(t))^2\cos^2(s) +\sin^2(t)\sin^2(s)\\ +2(1+\cos(t))\sin(t)\cos(s)\sin(s)
    \end{multlined}\\
    &= \begin{multlined}[t][0.65\displaywidth]
        (1+\cos(t))^2\frac{1}{2}(1+\cos(2s)) +\sin^2(t)\frac{1}{2}(1-\cos(2s))\\ +2(1+\cos(t))\sin(t)\cos(s)\sin(s)
    \end{multlined}\\
    &= \begin{multlined}[t][0.65\displaywidth]
        \frac{1}{2}\left\{\left[(1+\cos(t))^2+\sin^2(t)\right]e_0(s) +\left[(1+\cos(t))^2-\sin^2(t)\right]e_{2}(s)\right\}\\ +2(1+\cos(t))\sin(t)\cos(s)\sin(s)
    \end{multlined}\\
    &= \begin{multlined}[t][0.65\displaywidth]
        \frac{1}{2}\left\{2\left[1+\cos(t)\right]e_0(s) +\left[1+2\cos(t)+\cos(2t)\right]e_{2}(s)\right\}\\ +2(1+\cos(t))\sin(t)\cos(s)\sin(s),
    \end{multlined}
\end{align}
and the fact that the integral of the last term multiplied by $K_{z_2}(t,2R_1)$ vanishes because it is odd in $t$, and $K_{z_2}(t,2R_1)$ is even. Hence, 
\begin{equation}
    \dot{\mu}_{1} = \frac{\expval{e_1,\dot{L}e_1}}{\expval{e_1,e_1}} = 0.
\end{equation}

From the calculations above, we also deduce that
\begin{equation}\label{eq:mat_elements_dotL}
    \frac{\expval{e_k,\dot{L}e_1}}{\expval{e_k,e_k}} = \begin{cases}\displaystyle
        -\int_{\mathbb{R}} (1+\cos(t))K_{z_2}(t,2R_1)\dd{t}, &\text{ if }k=0,\\[10 pt]
        \displaystyle -\frac{1}{2}\int_{\mathbb{R}}[1+2\cos(t)+\cos(2t)]K_{z_2}(t,2R_1)\dd{t}, &\text{ if }k=2,\\[10 pt]
        \displaystyle 0, &\text{ otherwise.}
    \end{cases}
\end{equation}
As a consequence, recalling \eqref{eq:dot_e1_sec6}, \eqref{eq:ddot_mu_sec6}, and the fact that $\mu_1(0)=0$ (since $R=R_1$), we see that
\begin{equation}
    \dot{e}_1 = -\frac{1}{\mu_0}\frac{\expval{e_0,\dot{L}e_1}}{\expval{e_0,e_0}\expval{e_1,e_1}}e_0 - \frac{1}{\mu_2}\frac{\expval{e_2,\dot{L}e_1}}{\expval{e_2,e_2}\expval{e_1,e_1}}e_2
\end{equation}
and
\begin{equation}
    \ddot{\mu}_1=\frac{\expval{e_1, \ddot{L}e_1}}{\expval{e_1,e_1}} - \frac{2}{\mu_0}\frac{\smabs{\expval{e_0,\dot{L}e_1}}^2}{\expval{e_0,e_0}\expval{e_1,e_1}}- \frac{2}{\mu_2}\frac{\smabs{\expval{e_2,\dot{L}e_1}}^2}{\expval{e_2,e_2}\expval{e_1,e_1}},
\end{equation}
as we claimed in Proposition \ref{proposition:dotmu_ddotmu_dote1}.
\end{proof}

\subsection{Asymptotic behaviour as $\alpha \uparrow 1$: proof of Proposition \ref{proposition:asymptotic_alpha_to_1}}\label{subsec:asymptotic_alpha_1}
In this subsection we will prove Proposition \ref{proposition:asymptotic_alpha_to_1}. To do this, first we must explicitly evaluate formula \eqref{eq:asymptotic_expression_ddot_R1} for $\ddot{\mathcal{R}}_1(0)$, which we re-state next for easier reference,
\begin{equation}\label{eq:asymptotic_expression_ddot_R1_sec6}
        \mathcal{R}_1(a) = \left(\frac{1}{2}\frac{\expval{e_1, \ddot{L}e_1}}{\expval{e_1,e_1}} - \frac{1}{\mu_{2}}\frac{\smabs{\expval{e_{2},\dot{L}e_1}}^2}{\expval{e_{2},e_{2}}\expval{e_1,e_1}}\right)\Bigg\vert_{a=0} a^2 + O(a^3).
\end{equation}

Notice that the second term in \eqref{eq:asymptotic_expression_ddot_R1_sec6} was already obtained in \eqref{eq:mat_elements_dotL}. It remains to compute $\ddot{L}(a)$. Differentiating \eqref{dot_La_sec6} with respect to $a$ we get
\begin{multline}
    \ddot{L}(a)\psi = \int_{\mathbb{R}} \left\{\delta_- \psi \delta_- \ddot{w}_1(a) K_{z_2}\left(t, \delta_- w_1(a)\right) - \delta_0 \psi \delta_0 \ddot{w}_1(a) K_{z_2}\left(t, \delta_0 w_1(a)\right)\right\}\dd{t}\\
    +\int_{\mathbb{R}} \left\{\delta_- \psi \left(\delta_- \dot{w}_1(a)\right)^2K_{z_2,z_2}\left(t, \delta_- w_1(a)\right) - \delta_0 \psi \left(\delta_0 \dot{w}_1(a)\right)^2 K_{z_2,z_2}\left(t, \delta_0 w_1(a)\right)\right \}\dd{t},
\end{multline}
which, since $\dot{w}_1(0)=e_1$, $\ddot{w}_1(0) = \ddot{\gamma}_1(0) R_1 + 2 \dot{v}_1(0)$, and $K_{z_2}(t,0)=0$, at $a=0$ yields
\begin{multline}\label{ddotL}
    \ddot{L} e_1 =  \int_{\mathbb{R}} - \delta_0 e_1 \left(2\ddot{\gamma}_1 R_1   + 2\delta_0 \dot{v}_1\right) K_{z_2}(t, 2R_1)\dd{t}\\
    +\int_{\mathbb{R}} \left\{ (\delta_- e_1)^3K_{z_2,z_2}(t, 0) - \left(\delta_0 e_1\right)^3 K_{z_2,z_2}(t, 2R_1)\right\}\dd{t}.
\end{multline}

Notice that calculating $\expval{e_1, \ddot{L}e_1}$ requires knowing $\ddot{\gamma}_1(0)$ and $\dot{v}_1(0)$. In order to do this, we proceed as follows. Slightly abusing notation, we write
\begin{equation}
    \overline{\Phi}_1(a)=\overline{\Phi}_1(a, \gamma_1(a), v_1(a)) = \frac{1}{2a}\left\{H_{\alpha}(\gamma_1(a) R_1 + a\left(e_1 + v_1(a)\right))-H_{\alpha}(\gamma_1(a) R_1)\right\},
\end{equation}
as in the proof of Theorem \ref{theorem_seq_bif}. Since $\overline{\Phi}_1(a)\equiv 0$, we have 
\begin{equation}\label{first_total_derivative}
    0=\derivative{\overline{\Phi}_1}{a} = \partial_a \overline{\Phi}_1+\dot{\gamma}_1\partial_\gamma {\overline{\Phi}_1}+\partial_v {\overline{\Phi}_1}[\dot{v}_1],
\end{equation}
which, since $\dot{\gamma}_1(0)=0$ (by Proposition \ref{proposition:dot_gamma}), yields
\begin{equation}\label{first_tot_der_at_0}
    0=\partial_a \overline{\Phi}_1 + \partial_v {\overline{\Phi}_1}[\dot{v}_1].
\end{equation}
Differentiating \eqref{first_total_derivative} again we obtain
\begin{equation}
\setlength\arraycolsep{1.5pt}
    \begin{array}{rcccccccccc}
    0=\displaystyle\dv[2]{\overline{\Phi}_1}{a}  & = & \partial^2_{a,a} \overline{\Phi}_1 &+& \dot{\gamma}_1\partial^2_{a, \gamma} \overline{\Phi}_1 &+& \partial^2_{a,v}\overline{\Phi}_1[\dot{v}_1] &&&&\\
    &+& \dot{\gamma}_1 \partial^2_{\gamma,a}\overline{\Phi}_1  &+& \dot{\gamma}_1^2 \partial^2_{\gamma,\gamma} \overline{\Phi}_1 &+& \dot{\gamma}_1\partial^2_{\gamma,v} \overline{\Phi}_1[\dot{v}_1] &&&&\\[4 pt]
    &+& \partial^2_{v, a} \overline{\Phi}_1 [\dot{v}_1]&+&\dot{\gamma}_1\partial^2_{v, \gamma}\overline{\Phi}_1[\dot{v}_1] &+& \partial^2_{v,v} \overline{\Phi}_1 [\dot{v}_1,\dot{v}_1]  &+& \ddot{\gamma}_1\partial_{\gamma}\overline{\Phi}_1 &+& \partial_{v} \overline{\Phi}_1 [\ddot{v}_1],
\end{array}
\end{equation}
which, recalling that $\dot{\gamma}_1(0)=0$, yields
\begin{equation}\label{second_total_derivative_at_a0}
    0 = \partial^2_{a,a} \overline{\Phi}_1  + 2\partial_{a,v}\overline{\Phi}_1[\dot{v}_1] + \partial_{v,v}^2 \overline{\Phi}_1 [\dot{v}_1,\dot{v}_1]+ \ddot{\gamma}_1\partial_{\gamma}\overline{\Phi}_1 + \partial_{v} \overline{\Phi}_1 [\ddot{v}_1].
\end{equation}

Since $\overline{\Phi}_1(a=0)=\overline{\Phi}_1(0,1,v)$ is linear in $v$, we have $\partial_{v,v}^2 \overline{\Phi}_1\vert_{a=0}\equiv 0$. Moreover, since $v_1(a)$ is orthogonal to $e_1(0)$ for all $a\in (-\nu_1,\nu_1)$, and the $e_k$ are all eigenvectors of $L(0)=\partial_v \overline{\Phi}_1 (0,1,0)$, we have that $\expval{e_1, \partial_{v} \overline{\Phi}_1 [\ddot{v}_1]} = \expval{e_1, L(0)\ddot{v}_1}= 0$. Hence, from \eqref{second_total_derivative_at_a0} and these two observations we deduce that
\begin{equation}\label{en_component_eq_0}
    0  = {\langle e_1, \partial^2_{a,a} \overline{\Phi}_1+2\partial_{a,v}\overline{\Phi}_1[\dot{v}_1]+\ddot{\gamma}_1\partial_{\gamma}\overline{\Phi}_1 \rangle}.
\end{equation}

We compute $\partial_{\gamma} \overline{\Phi}_1$ first. We have
\begin{equation}\label{eq:dgamma_phi}
    \partial_{\gamma}\overline{\Phi}_1(0,1,0) = -\int_{\mathbb{R}}\delta_0 e_1 2R_1K_{z_2}(t,2R_1)\dd{t}.
\end{equation}

We now need to compute $\partial_{a,v}\overline{\Phi}_1$ and $\partial_{a,a}^2 \overline{\Phi}_1$. We calculate $\partial_{a,v}\overline{\Phi}_1$ first. We have
\begin{equation}
    \partial_{v} \overline{\Phi}_1(a,1,0) [\psi] = \int_{\mathbb{R}} \left\{ \delta_-\psi K(t,a\delta_- e_1)  - \delta_0\psi K(t,2 \gamma_1 R_1+a\delta_0 e_1 )\right\}\dd{t},
\end{equation}
and thus
\begin{align}
    \partial_{a,v}\overline{\Phi}_1(0,1,0)[\psi] &=
    \int_{\mathbb{R}} \left\{\delta_-\psi \delta_-e_1 K_{z_2}(t,0) - \delta_0\psi \delta_0 e_1 K_{z_2}(t,2R_1)\right\} \dd{t}\\ &=- \int_{\mathbb{R}} \delta_0\psi \delta_0 e_1K_{z_2}(t,2R_1)\dd{t},\label{eq:dav_phi}
\end{align}
since $K_{z_2}(t,0)=0$.

Now we compute $\partial_{a,a}^2 \overline{\Phi}_1$. Recalling that $\overline{\Phi}_1(a, 1, 0) = \frac{1}{2a}\left\{H_{\alpha}(R_1 + ae_1)-H_{\alpha}(R_1)\right\}$, we have
\begin{equation}
    \partial_a \overline{\Phi}_1(a,1,0) = \frac{1}{2a}\left\{ D_uH_\alpha(R_1+a e_1)e_1-\overline{\Phi}_1(a, 1, 0)\right\},
\end{equation}
and thus,
\begin{equation}
    \partial_{a,a}^2 \overline{\Phi}_1(a, 1, 0) = \frac{1}{2a}\left\{D_{u,u}^2 H_{\alpha}(R_1+ a e_1)[e_1,e_1] - \frac{2}{a} \left\{D_uH_{\alpha}(R_1+ae_1)e_1 - \Phi_1(a,1,0)\right\}\right\}.
\end{equation}
Letting $a\to 0$ and using Lemma \ref{lemma_derivatives_calc1} (which we state and prove later on in this section) applied to the function $a\mapsto H_{\alpha} (R_1 + a e_1)$, we obtain
\begin{equation}\label{eq:daa_phi}
    \partial_{a,a}^2\overline{\Phi}_1 = \frac{1}{6}D_{u,u,u}^3 H_{\alpha}(R_1)[e_1,e_1,e_1] = \frac{1}{3} \int_{\mathbb{R}} \left\{(\delta_- e_1)^3K_{z_2,z_2}(t,0) -(\delta_0 e_1)^3 K_{z_2,z_2}(t,2R_1) \right\} \dd{t}.
\end{equation}

Substituting \eqref{eq:dgamma_phi}, \eqref{eq:dav_phi}, and \eqref{eq:daa_phi} in \eqref{en_component_eq_0} we get
\begin{multline}
    \left\langle e_1, \int_{\mathbb{R}}-\delta_0 e_1 (2\ddot{\gamma}_1 R_1 + 2\delta_0\dot{v}_1) K_{z_2}(t,2R_1)\dd{t} \right\rangle= \\
    -\frac{1}{3} \left\langle e_1, \int_{\mathbb{R}} \left\{(\delta_- e_1)^3K_{z_2,z_2}(t,0) -(\delta_0 e_1)^3 K_{z_2,z_2}(t,2R_1) \right\} \dd{t}\right \rangle,
\end{multline}
which, together with \eqref{ddotL}, yields
\begin{equation}\label{emddotLem}
    \expval{e_1,\ddot{L} e_1} = \left\langle e_1, \frac{2}{3}\int_{\mathbb{R}} \left\{ (\delta_- e_1)^3K_{z_2,z_2}(t, 0) - \left(\delta_0 e_1\right)^3 K_{z_2,z_2}(t, 2R_1)\right\}\dd{t}\right\rangle.
\end{equation}

Now, to compute the expression in the right hand side of \eqref{emddotLem}, we will use several trigonometric identities to expand $(\delta_- e_1)^3$ and $(\delta_0 e_1)^3$ into products of a function of $t$ and a function of $s$. We have
\begin{align}
    \left(\delta_- e_1\right)^3(s,t)  &= (\cos(s)-\cos(s-t))^3 \\ &= ( (1-\cos(t))\cos(s)-\sin(t)\sin(s))^3 \\ 
    &= \begin{multlined}[t][0.65\displaywidth]
        (1-\cos(t))^3\cos^3(s) +3(1-\cos(t))\sin^2(t)\cos(s)\sin^2(s)\\ -3(1-\cos(t))^2\sin(t)\cos^2(s)\sin(s)-\sin^3(t)\sin^3(s),
    \end{multlined}\\ 
    &= \begin{multlined}[t][0.65\displaywidth]
        (1-\cos(t))^3\frac{1}{4}\left(3e_1(s)+e_{3}(s)\right) \\+3(1-\cos(t))\sin^2(t)\frac{1}{4}\left(e_1(s)-e_{3}(s)\right)\\ -3(1-\cos(t))^2\sin(t)\cos^2(s)\sin(s)-\sin^3(t)\sin^3(s),
    \end{multlined}\\ 
    &= \begin{multlined}[t][0.65\displaywidth]
        \frac{3}{4}\left\{ (1-\cos(t))^3 + (1-\cos(t))\sin^2(t) \right\} e_{1}(s) \\
        +\frac{1}{4} \left\{ (1-\cos(t))^3 - 3(1-\cos(t))\sin^2(t)\right\} e_{3}(s)\\ -3(1-\cos(t))^2\sin(t)\cos^2(s)\sin(s)-\sin^3(t)\sin^3(s),
    \end{multlined}\\
    &= \begin{multlined}[t][0.65\displaywidth]
        \frac{3}{4}\left\{ 3-4\cos(t)+\cos(2t) \right\} e_{1}(s) \\
        +\frac{1}{4} \left\{ (1-\cos(t))^3 - 3(1-\cos(t))\sin^2(t)\right\} e_{3}(s)\\ -3(1-\cos(t))^2\sin(t)\cos^2(s)\sin(s)-\sin^3(t)\sin^3(s),
    \end{multlined}
\end{align}
and, similarly,
\begin{align}
    \left(\delta_0 e_1\right)^3(s,t)  &= (\cos(s)+\cos(s-t))^3 \\ &= ( (1+\cos(t))\cos(s)+\sin(t)\sin(s))^3 \\ 
    &= \begin{multlined}[t][0.65\displaywidth]
        (1+\cos(t))^3\cos^3(s) +3(1+\cos(t))\sin^2(t)\cos(s)\sin^2(s)\\ +3(1+\cos(t))^2\sin(t)\cos^2(s)\sin(s)+\sin^3(t)\sin^3(s),
    \end{multlined}\\ 
    &= \begin{multlined}[t][0.65\displaywidth]
        (1+\cos(t))^3\frac{1}{4}\left(3e_1(s)+e_{3}(s)\right) \\+3(1+\cos(t))\sin^2(t)\frac{1}{4}\left(e_1(s)-e_{3}(s)\right)\\ +3(1+\cos(t))^2\sin(t)\cos^2(s)\sin(s)+\sin^3(t)\sin^3(s),
    \end{multlined}\\ 
    &= \begin{multlined}[t][0.65\displaywidth]
        \frac{3}{4}\left\{ (1+\cos(t))^3 + (1+\cos(t))\sin^2(t) \right\} e_{1}(s) \\
        +\frac{1}{4} \left\{ (1+\cos(t))^3 - 3(1+\cos(t))\sin^2(t)\right\} e_{3}(s)\\ +3(1+\cos(t))^2\sin(t)\cos^2(s)\sin(s)+\sin^3(t)\sin^3(s),
    \end{multlined}\\
    &= \begin{multlined}[t][0.65\displaywidth]
        \frac{3}{4}\left\{ 3+4\cos(t)+\cos(2t) \right\} e_{1}(s) \\
        +\frac{1}{4} \left\{ (1-\cos(t))^3 - 3(1-\cos(t))\sin^2(t)\right\} e_{3}(s)\\ -3(1-\cos(t))^2\sin(t)\cos^2(s)\sin(s)-\sin^3(t)\sin^3(s).
    \end{multlined}
\end{align}
Notice that the integral of the last two terms in the expressions for $(\delta_- e_1)^3$ and $(\delta_0 e_1)^3$ against $K_{z_2,z_2}(t,0)$ and $K_{z_2,z_2}(t,2R_1)$, respectively, vanish (in the principal value sense) because they are odd in $t$, and $K_{z_2,z_2}(t,0)$ and $K_{z_2,z_2}(t,2R_1)$ are even in $t$.

As a consequence, from \eqref{emddotLem} we obtain
\begin{multline}\label{eq:exp_val_ddotL}
    \frac{\expval{e_1,\ddot{L} e_1}}{\expval{e_1,e_1}} = \frac{1}{2} \int_{\mathbb{R}} \left(3-4\cos(t)+\cos(2t)\right) K_{z_2,z_2}(t,0) \dd{t} \\ -\frac{1}{2}\int_{\mathbb{R}} \left(3+4\cos(t)+\cos(2t)\right)K_{z_2,z_2}(t,2R_1) \dd{t}.
\end{multline}

Now, since by Lemma \ref{lemma:diagonalization_L},
\begin{equation}
    \mu_{2} = \int_{\mathbb{R}}\left\{(1-\cos(2t))K(t,0)-(1+\cos(2t))K(t,2R_1)\right\}\dd{t},
\end{equation}
we have, by \eqref{eq:mat_elements_dotL},
\begin{equation}
   \frac{-1}{\mu_{2}}\frac{\smabs{\expval{e_{2}, \dot{L} e_1}}^2}{\expval{e_{2},e_{2}}\expval{e_1,e_1}} = -\frac{1}{4}
   \frac{\left(\int_{\mathbb{R}}[1+2\cos(t)+\cos(2t)]K_{z_2}(t,2R_1)\dd{t}\right)^2}{\int_{\mathbb{R}}\left\{(1-\cos(2t))K(t,0)-(1+\cos(2t))K(t,2R_1)\right\}\dd{t}}.
\end{equation}
Plugging this and \eqref{eq:exp_val_ddotL} in \eqref{eq:asymptotic_expression_ddot_R1_sec6} we get
\begin{multline}\label{final_expression_ddot_mu}
    \ddot{\mathcal{R}}_1(0) =
    \frac{1}{4} \int_{\mathbb{R}} \left(3-4\cos(t)+\cos(2t)\right) K_{z_2,z_2}(t,0) \dd{t} \\ 
    -\frac{1}{4} \int_{\mathbb{R}}  \left(3+4\cos(t)+\cos(2t)\right)K_{z_2,z_2}(t,2R_1) \dd{t} \\ 
    -\frac{1}{4}
   \frac{\left(\int_{\mathbb{R}}[1+2\cos(t)+\cos(2t)]K_{z_2}(t,2R_1)\dd{t}\right)^2}{\int_{\mathbb{R}}\left\{(1-\cos(2t))K(t,0)-(1+\cos(2t))K(t,2R_1)\right\}\dd{t}}.
\end{multline}

We now state and prove Lemma \ref{lemma_derivatives_calc1} that we used in our previous calculations. Its proof is a simple calculus exercise, but we include it for completeness.
\begin{lemma}\label{lemma_derivatives_calc1}
Let $f: \mathbb{R}\to \mathbb{R}$ be a function three times differentiable. Then,
\begin{equation}
    \lim_{\delta\to 0}\frac{f''(\delta)-2\frac{f'(\delta)-\frac{1}{\delta}[f(\delta)-f(0)]}{\delta}}{\delta}  = \frac{1}{3}f'''(0).
\end{equation}
\end{lemma}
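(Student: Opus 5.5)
The plan is to Taylor-expand $f$ around $0$ to third order and substitute into the numerator; the three fractions will then combine into an explicit multiple of $f'''(0)$. With the hypothesis that $f$ is three times differentiable (at least in a neighbourhood of $0$, with $f'''$ existing at $0$), Taylor's theorem with Peano remainder will be applied to $f$, $f'$, and $f''$ separately:
\begin{align}
    f(\delta) &= f(0) + f'(0)\delta + \tfrac12 f''(0)\delta^2 + \tfrac16 f'''(0)\delta^3 + o(\delta^3),\\
    f'(\delta) &= f'(0) + f''(0)\delta + \tfrac12 f'''(0)\delta^2 + o(\delta^2),\\
    f''(\delta) &= f''(0) + f'''(0)\delta + o(\delta).
\end{align}
The first expansion requires Peano's form of order three. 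That form is valid under three-fold differentiability at $0$ (existence of $f''$ near $0$ and of $f'''(0)$). The others follow in the same way from differentiability of $f'$ and $f''$.

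Next, I will compute the inner quotient:
\begin{equation}
    \tfrac{1}{\delta}[f(\delta)-f(0)] = f'(0) + \tfrac12 f''(0)\delta + \tfrac16 f'''(0)\delta^2 + o(\delta^2).
\end{equation}
Subtracting this from $f'(\delta)$ leaves $\tfrac12 f''(0)\delta + \tfrac13 f'''(0)\delta^2 + o(\delta^2)$. Dividing by $\delta$ and multiplying by $2$ gives $f''(0) + \tfrac23 f'''(0)\delta + o(\delta)$.

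Subtracting this from $f''(\delta)$ yields $\tfrac13 f'''(0)\delta + o(\delta)$. Dividing by $\delta$ and letting $\delta\to 0$ then gives the claimed limit $\tfrac13 f'''(0)$.

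The only delicate point is bookkeeping of the remainder orders. Each division by $\delta$ lowers the order of the remainder by one, so the expansion of $f$ must be taken to third order. The expansions of $f'$ and $f''$ need only be carried to second and first order, respectively, so that every remainder ends up as $o(1)$ after the final division. No further obstacle is expected. (Alternatively, one could apply L'Hôpital's rule after multiplying through by $\delta^2$, but the Taylor route is cleaner.)
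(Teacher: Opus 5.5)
Your proposal is correct and follows essentially the same route as the paper. The paper likewise Taylor-expands $f$, $f'$, and $f''$ with Peano remainders to third, second, and first order, and obtains the coefficients $\tfrac12-\tfrac16=\tfrac13$ and $1-\tfrac23=\tfrac13$ exactly as you do.
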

\begin{proof}
It is a simple calculus exercise. We write
\begin{equation}
    f(\delta)=f(0)+f'(0)\delta+\frac{1}{2}f''(0)\delta^2 + \frac{1}{3!}f'''(0)\delta^3 + o(\delta^3)
\end{equation}
and similarly for $f'(\delta)$ and $f''(\delta)$. Hence,
\begin{align}
    \frac{f'(\delta)-\frac{1}{\delta}[f(\delta)-f(0)]}{\delta} &= \frac{f'(\delta)-f'(0)-\frac{1}{2}f''(0)\delta-\frac{1}{3!}f'''(0)\delta^2 + o(\delta^2)}{\delta}\\ 
    &= \frac{f''(0)\delta + \frac{1}{2}f'''(0) \delta^2-\frac{1}{2}f''(0)\delta-\frac{1}{3!}f'''(0)\delta^2 + o(\delta^2)}{\delta} \\ 
    &= \frac{1}{2}f''(0) + \left(\frac{1}{2}-\frac{1}{3!}\right) f'''(0)\delta + o( \delta)
\end{align}
and thus
\begin{align}
    \frac{f''(\delta)-2\frac{f'(\delta)-\frac{1}{\delta}[f(\delta)-f(0)]}{\delta}}{\delta} &= \frac{f''(\delta)-f''(0)-\left(1-\frac{1}{3}\right)f'''(0)\delta+o(\delta)}{\delta}\\ 
    &=  \frac{f'''(0)\delta-\left(1-\frac{1}{3}\right)f'''(0)\delta+o(\delta)}{\delta} \\ 
    &= \frac{1}{3}f'''(0)+o(1).
\end{align}
\end{proof}

\begin{figure}[h!]
    \centering
    \includegraphics[trim = {3cm 9.5cm 3cm 9.8cm}, clip, width = 0.6\textwidth]{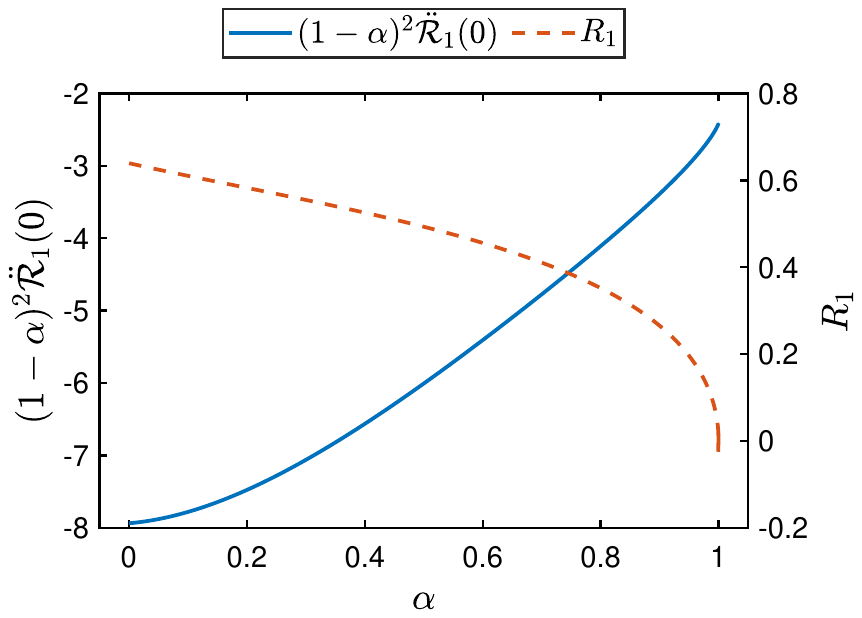}
    \caption{Representation of the values of $R_1$ and $\ddot{\mathcal{R}}_1$ as a function of $\alpha$ obtained by numerical computation of the function in \eqref{expression_ddot_mu_hom_kernel}. Since $\ddot{\mathcal{R}}_1(0)$ diverges quadratically as $\alpha\uparrow 1$, we represent its product with $(1-\alpha)^2$. Notice how $\ddot{\mathcal{R}}_1(0)<0$ for all $\alpha\in (0,1)$.}\label{fig:stability_vs_alpha}
\end{figure} 

Finally, we are ready to give the proof of Proposition \ref{proposition:asymptotic_alpha_to_1}.
\begin{proof}[Proof of Proposition \ref{proposition:asymptotic_alpha_to_1}]
We will now compute the integrals in \eqref{final_expression_ddot_mu} explicitly, and we will evaluate them in the limit $\alpha \uparrow 1$. Since $K(z)=\abs{z}^{-2- \alpha}$, we have
\begin{equation}
K_{z_2}(z_1,z_2)=-\frac{(2+\alpha)z_2}{(z_1^2+z_2^2)^{(4+\alpha)/2}} \quad \text{and}\quad K_{z_2,z_2}(z_1,z_2) = -\frac{2+\alpha}{(z_1^2+z_2^2)^{(4+\alpha)/2}}+\frac{(2+\alpha)(4+\alpha)z_2^2}{(z_1^2+z_2^2)^{(6+\alpha)/2}}.
\end{equation}
Substituting these expressions in \eqref{final_expression_ddot_mu} we deduce that
\begin{multline}\label{eq:expression_ddotR_for_explicit_kernel}
    \frac{4}{2+\alpha}\ddot{\mathcal{R}}_1(0) = -\int_{\mathbb{R}}  \frac{3-4\cos(s)+\cos(2s)}{\abs{s}^{4+\alpha}}\dd{s}  \\ + \int_{\mathbb{R}} (3+4\cos(s)+\cos(2s))\left\{\frac{1}{(s^2+c^2)^{(4+\alpha)/2}}-\frac{(4+\alpha)c^2 }{(s^2+c^2)^{(6+\alpha)/2}}\right\} \dd{s}\\ 
    - \frac{(2+\alpha) c^2 \left(\int_{\mathbb{R}} \frac{1+2\cos( s)+\cos(2s)}{(s^2+c^2)^{(4+\alpha)/2}}\dd{s} \right)^2}{\int_{\mathbb{R}}\left\{\frac{1-\cos(2s)}{\abs{s}^{2+\alpha}}-\frac{1+\cos(2s)}{(s^2+c^2)^{(2+\alpha)/2}}\right\}\dd{s}},
\end{multline}
where we have denoted $c:=2R_1$. 

We will first compute the integrals
\begin{equation}
    I_{4+\alpha} := \int_{\mathbb{R}}  \frac{3-4\cos(s)+\cos(2s)}{\abs{s}^{4+\alpha}}\dd{s}= \int_{\mathbb{R}} \frac{8\sin^4(s/2)}{\abs{s}^{4+\alpha}} \dd{s}
\end{equation}
and
\begin{equation}
    J_{2+\alpha} := \int_\mathbb{R}\frac{1-\cos(2s)}{\abs{s}^{2+\alpha}}\dd{s} = \int_{\mathbb{R}}\frac{2\sin^2(s)}{\abs{s}^{2+\alpha}}\dd{s}.
\end{equation}
Using the following formula, which can be easily proved using integration by parts,
\begin{equation}
    \int_0^\infty \frac{\sin^k(s)}{s^{k+\alpha}}\dd{s} = \frac{1}{(k+\alpha-1)(k+\alpha-2)\cdots \alpha} \int_0^\infty \derivative[k]{\sin^k(s)}{s} \frac{1}{s^\alpha}\dd{s},
\end{equation}
and the fact (see, for example, \cite[3.761, 9]{tables_book} and apply Euler's reflection and Legendre's duplication formulae) that
\begin{equation}
    \int_{\mathbb{R}}\frac{\cos(\xi s)}{\abs{s}^{\alpha}} \dd{s} = \int_{\mathbb{R}}\frac{e^{i\xi s}}{\abs{s}^{\alpha}} \dd{s} = \frac{2^{1-\alpha}\sqrt{\pi}\Gamma((1-\alpha)/2)}{\Gamma(\alpha/2)}\abs{\xi}^{\alpha-1}
\end{equation}
for $\alpha\in (0,1)$, we deduce that
\begin{align}
    I_{4+\alpha} = 16\cdot 2^{-3-\alpha} \int_0^\infty \frac{\sin^4(s)}{s^{4+\alpha}}\dd{s} &=  \frac{16\cdot 2^{-3 - \alpha}}{(3+\alpha)(2+\alpha)(1+\alpha) \alpha } \int_0^\infty \frac{32 \cos(4s)-8\cos(2s)}{s^{\alpha}}\dd{s}\\ &=  \frac{2^{3-\alpha}\left(2^{\alpha+1}-1\right)\sqrt{\pi}}{(3+\alpha)(2+\alpha)(1+\alpha) \alpha }\frac{\Gamma\left((1-\alpha)/2\right) }{\Gamma(\alpha/2)}\\ 
    &= \frac{\left(8-2^{2-\alpha}\right)\sqrt{\pi}}{(3+\alpha)(2+\alpha)(1+\alpha)}\frac{\Gamma\left((1-\alpha)/2\right) }{\Gamma((2+\alpha)/2)}
\end{align}
and
\begin{equation}
    J_{2+\alpha} = \frac{4\sqrt{\pi}  }{(1+\alpha)\alpha }\frac{\Gamma((1-\alpha)/2)}{\Gamma(\alpha/2)} = \frac{2\sqrt{\pi}  }{1+\alpha }\frac{\Gamma((1-\alpha)/2)}{\Gamma((2+\alpha)/2)}.
\end{equation}

The other integrals in \eqref{eq:expression_ddotR_for_explicit_kernel} are computed using the relation (see \cite[3.771, 2]{tables_book} and apply Euler's reflection formula)
\begin{equation}
    F_{\nu}(\xi) := \int_{\mathbb{R}} \frac{\cos(\xi s)}{(s^2+c^2)^{\nu/2}} \dd{s} = \int_{\mathbb{R}} \frac{e^{i \xi s}}{(s^2+c^2)^{\nu/2}} \dd{s}= \begin{cases}
        \frac{\sqrt{\pi} }{c^{\nu-1}\Gamma(\nu/2)}\Gamma\left( \frac{\nu-1}{2}\right), &\text{ if }\xi=0\\[6 pt]
        \frac{\sqrt{\pi} }{c^{\nu-1}\Gamma(\nu/2)} \Psi_{\frac{\nu-1}{2}}(c \xi), &\text{ if }\xi>0
    \end{cases}
\end{equation}
for $\nu>1$, where
\begin{equation}
    \Psi_{\frac{\nu-1}{2}}(z):=2\left(\frac{z}{2}\right)^{(\nu-1)/2} K_{\frac{\nu-1}{2}}(z),
\end{equation}
and $K_{\nu}$ denotes the modified Bessel function of the second kind.

Hence, we can write
\begin{multline}
    \frac{4}{2+\alpha}\ddot{\mathcal{R}}_1(0) = -I_{4+\alpha} + 3F_{4+\alpha}(0)+4F_{4+\alpha}(1)+F_{4+\alpha}(2)\\
    -(4+\alpha)c^2 \left(3F_{6+\alpha}(0)+4F_{6+\alpha}(1)+F_{6+\alpha}(2)\right)\\ 
    - (2+\alpha) c^2 \frac{\left(F_{4+\alpha}(0)+2F_{4+\alpha}(1)+F_{4+\alpha}(2)\right)^2}{J_{2+\alpha}-F_{2+\alpha}(0)-F_{2+\alpha}(2)},
\end{multline}
or, equivalently,
\begin{multline}\label{expression_ddot_mu_hom_kernel}
\frac{\Gamma\left(\frac{2+\alpha}{2}\right)}{\sqrt{\pi}/2} c^{3+ \alpha} \ddot{\mathcal{R}}_1(0)= 
-\frac{\left(4-2^{1-\alpha}\right)c^{3+\alpha}}{(3+\alpha)(1+\alpha)}\Gamma\left(\frac{1-\alpha}{2}\right) \\
+ \left( 3\Gamma\left(\frac{3+\alpha}{2}\right)+4 \Psi_{\frac{3+\alpha}{2}}(c)+\Psi_{\frac{3+\alpha}{2}}(2c)\right)\\ 
- 2\left(3\Gamma\left(\frac{5+\alpha}{2}\right)+4 \Psi_{\frac{5+\alpha}{2}}(c)+\Psi_{\frac{5+\alpha}{2}}(2c)\right)\\
-2\frac{\left( \Gamma\left(\frac{3+\alpha}{2}\right)+2\Psi_{\frac{3+\alpha}{2}}(c)+\Psi_{\frac{3+\alpha}{2}}(2c)\right)^2}{\frac{2}{(1+\alpha)}c^{1+\alpha}\Gamma\left(\frac{1-\alpha}{2}\right)-\Gamma\left(\frac{1+\alpha}{2}\right)-\Psi_{\frac{1+\alpha}{2}}(2c)}.
\end{multline}

Let us now analyse the limit $\alpha\uparrow 1$. The equation for $c=2R_1$ as a function of $\alpha$, namely,
\begin{equation}
    \mu_1=\int_{\mathbb{R}} (1-\cos(t)) K(t,0) \dd{t} - \int_{\mathbb{R}}(1+\cos(t))K(t,c)\dd{t} = 0,
\end{equation}
when $K(z)=\abs{z}^{-2-\alpha}$ is
\begin{equation}\label{eq_defining_c}
    \int_{\mathbb{R}} \frac{2\sin^2(\frac{t}{2})}{\abs{t}^{2+\alpha}} \dd{t} =  \int_{\mathbb{R}}\frac{1+\cos(t)}{(t^2+c^2)^{\frac{2+\alpha}{2}}}\dd{t}.
\end{equation}
Since the expression in the left hand side diverges as $\alpha\uparrow 1$, it follows that $c \to 0$ as $\alpha\uparrow 1$. On the other hand, using this, and changing variables in the definition of $F_\nu$, it is easy to see that
\begin{equation}
    \Psi_{\frac{3+\alpha}{2}}(c(\alpha)) \to \Gamma\left(\frac{3+1}{2}\right) \quad\text{as}\quad \alpha \uparrow 1,
\end{equation}
and similarly for similar terms. 

Now, let us calculate the asymptotic behaviour of $c$ as $\alpha\uparrow 1$. Repeating similar computations as before, we find that
\begin{equation}
    \int_{\mathbb{R}} \frac{2\sin^2(\frac{t}{2})}{\abs{t}^{2+\alpha}} \dd{t} = 2^{-\alpha}\int_{\mathbb{R}} \frac{\sin^2(s)}{\abs{s}^{2+\alpha}} \dd{s} = 2^{-1- \alpha} J_{2+\alpha} = 2^{- \alpha} \frac{\sqrt{\pi}  }{1+\alpha }\frac{\Gamma((1-\alpha)/2)}{\Gamma((2+\alpha)/2)}
\end{equation}
and
\begin{equation}
    \int_{\mathbb{R}}\frac{1+\cos(t)}{(t^2+c^2)^{\frac{2+\alpha}{2}}}\dd{t} = F_{2+\alpha}(0)+F_{2+\alpha}(1) = \frac{\sqrt{\pi}}{c^{1+\alpha}}\left(\Gamma\left(\frac{1+\alpha}{2}\right)+\Psi_{\frac{1+\alpha}{2}}(c)\right).
\end{equation}
Hence, by \eqref{eq_defining_c},
\begin{equation}
    2^{- \alpha} \frac{\sqrt{\pi}  }{1+\alpha }\frac{\Gamma((1-\alpha)/2)}{\Gamma((2+\alpha)/2)} = \frac{\sqrt{\pi}}{c^{1+\alpha}}\left(\Gamma\left(\frac{1+\alpha}{2}\right)+\Psi_{\frac{1+\alpha}{2}}(c)\right),
\end{equation}
and thus,
\begin{equation}\label{eq:asymptotic_c_alpha_1}
    \lim_{\alpha \uparrow 1}c^{1+\alpha} \Gamma\left(\frac{1-\alpha}{2}\right) = \lim_{ \alpha \to 1^- }2^{\alpha }(1+\alpha)\Gamma\left(\frac{2+\alpha}{2}\right)\left(\Gamma\left(\frac{1+\alpha}{2}\right)+\Psi_{\frac{1+\alpha}{2}}(c)\right) = 4\sqrt{\pi}.
\end{equation}

Therefore, taking the limit $\alpha\uparrow 1$ in \eqref{expression_ddot_mu_hom_kernel}, we finally obtain
\begin{align}
\lim_{\alpha \uparrow 1} {\ddot{\mathcal{R}}_1(0)} &= \lim_{\alpha \uparrow 1}
\frac{1}{c^{3+\alpha}}\left\{ 
-\frac{3}{8}c^2c^{1+\alpha}\Gamma\left(\frac{1-\alpha}{2}\right) +8-32
 -\frac{32}{c^{1+\alpha}\Gamma\left(\frac{1-\alpha}{2}\right)-2}\right\}\\ 
&= \lim_{\alpha \uparrow 1} \frac{-1}{c^{3+\alpha}}
 \left\{ 24+\frac{16}{2\sqrt{\pi}-1}\right\}  = - \infty.
\end{align}
In fact, the previous computations, together with the fact that $\lim_{z\to 0} z\Gamma(z) = 1$, show that
\begin{equation}
    \lim_{\alpha \uparrow 1} (1-\alpha)^2 \ddot{\mathcal{R}}_1(0) = \frac{-1}{4\pi}\left\{ 24+\frac{16}{2\sqrt{\pi}-1}\right\} = \frac{-2}{\pi}\left\{ 3+\frac{2}{2\sqrt{\pi}-1}\right\}<0.
\end{equation}

From this, we conclude that $\ddot{\mathcal{R}}_1(0)<0 $ for $\alpha$ close to $1$, and, as a consequence, $\mathcal{R}_1(a)<0$ for $\abs{a}>0$ small enough (but nonzero), depending only on $\alpha$ and $\beta$.

\end{proof}
\section*{Acknowledgements}
The author thanks Joan de Solà-Morales and Xavier Cabré for their guidance and useful discussions on the topic of this paper.

\end{document}